\documentclass[twoside,11pt, letter]{article}


\usepackage{fixltx2e} 

\usepackage{cmap}

\usepackage[T1]{fontenc}
\usepackage{lmodern}
\usepackage[utf8]{inputenc}

\usepackage{verbatim}

\usepackage{booktabs}       

\usepackage{nicefrac}       

\usepackage{microtype}      

\usepackage{graphicx} 

\usepackage[colorlinks]{hyperref}
\usepackage{url}


\usepackage{geometry} 
\geometry{letterpaper} 
\geometry{left=1in,right=1in,top=1in,bottom=1in} 


\usepackage[toc,page]{appendix}
\usepackage{mathrsfs}
\bibliographystyle{abbrv}

\usepackage{amsmath, amssymb, amsthm}
\usepackage[linesnumbered, ruled]{algorithm2e}
\usepackage{tikz}
\usepackage{float}

\usepackage{bm}

\usepackage[font=small,labelfont=bf,labelsep=period]{caption}


\newcommand{\alg}{\mathcal{A}}

\DeclareMathOperator*{\argmin}{arg\,min}

\newcommand{\Aut}{\ensuremath{\operatorname{Aut}}}

\newcommand{\betamax}{\overline{\beta}}
\newcommand{\betamin}{\underline{\beta}}

\newcommand{\Cbar}{\ensuremath{\overline{C}}}
\newcommand{\completespace}{\mathfrak{C}}

\newcommand{\E}{\ensuremath{\mathbb{E}}}
\newcommand{\edgeset}{\mathcal{E}}
\newcommand{\emptytext}{\text{empty}}
\newcommand{\energy}{\mathcal{H}}
\newcommand{\expect}{\ensuremath{\mathbb{E}}}

\newcommand{\field}{\mathcal{F}}

\newcommand{\graphs}{\mathfrak{G}}
\newcommand{\graphset}{\mathcal{G}}
\newcommand{\graphspace}{\mathfrak{G}}

\newcommand{\hiv}{\text{HIV}}

\newcommand{\ind}{\ensuremath{\mathbf{1}}}
\newcommand{\infected}{\mathcal{I}}
\newcommand{\infspace}{\mathbb{I}}
\newcommand{\invertex}{\text{in}}
\newcommand{\ising}{\text{Ising}}

\newcommand{\kldiv}{\text{KL}}

\newcommand{\measure}{\mu}
\newcommand{\measuremap}{\mathcal{M}}
\newcommand{\measurespace}{\mathfrak{M}}

\newcommand{\neighbors}{N}

\newcommand{\paramset}{\mathfrak{B}}
\newcommand{\pathset}{\mathfrak{P}}
\newcommand{\pathvar}{\mathcal{P}}

\newcommand{\prob}{\ensuremath{\mathbb{P}}}

\newcommand{\reals}{\mathbb{R}}

\newcommand{\scriptA}{\ensuremath{\mathcal{A}}}
\newcommand{\scriptC}{\ensuremath{\mathcal{C}}}
\newcommand{\scriptO}{\ensuremath{\mathcal{O}}}

\newcommand{\sims}{\text{sims}}

\newcommand{\ssm}{\text{SSM}}

\newcommand{\support}{\text{supp}}

\newcommand{\tvdist}{\text{TV}}
\newcommand{\typeone}{(\text{TI})}

\newcommand{\uncensored}{\mathcal{U}}

\newcommand{\vect}[1]{\bm{#1}}
\newcommand{\vertexset}{\mathcal{V}}

\newcommand{\Wbar}{\ensuremath{\overline{W}}}
\newcommand{\wneighbors}{W}

\renewcommand{\xspace}{\mathcal{X}}

\newtheorem{theorem}{Theorem}
\newtheorem{lemma}{Lemma}
\newtheorem{proposition}{Proposition}
\newtheorem{corollary}{Corollary}
\newtheorem{definition}{Definition}

\newtheorem{remark}{Remark}

\newtheorem{theorem*}[theorem]{Theorem}   
\newtheorem{lemma*}{Lemma} 
\newtheorem{corollary*}{Corollary} 
\newtheorem{remark*}{Remark}
\newtheorem{example*}{Example}
\newtheorem{definition*}{Definition}
\newtheorem{proposition*}{Proposition}

\newfloat{newtable}{thp}{lop}
\floatname{newtable}{Table}

\newcommand{\GG}[1]{}
\renewcommand{\arraystretch}{1.2} 
\newcommand{\ra}[1]{\renewcommand{\arraystretch}{#1}}

\title{Permutation Tests for Infection Graphs}

\author{Justin Khim\thanks{Machine Learning Department, Carnegie Mellon University, Pittsburgh, PA 15213.}
\and 
Po-Ling Loh\thanks{Department of Statistics,
       University of Wisconsin,
       Madison, WI 53706 and 
       Department of Statistics,
       Columbia University, 
       New York, NY 10027.}}

\date{December 16, 2019}

\begin{document}
\maketitle

\begin{abstract}
We formulate and analyze a novel hypothesis testing problem for inferring the edge structure of an infection graph. In our model, a disease spreads over a network via contagion or random infection, where the times between successive contagion events are independent exponential random variables with unknown rate parameters. A subset of nodes is also censored uniformly at random. Given the observed infection statuses of nodes in the network, the goal is to determine the underlying graph. We present a procedure based on permutation testing, and we derive sufficient conditions for the validity of our test in terms of automorphism groups of the graphs corresponding to the null and alternative hypotheses. Our test is easy to compute and does not involve estimating unknown parameters governing the process. We also derive risk bounds for our permutation test in a variety of settings, and relate our test statistic to approximate likelihood ratio testing and maximin tests. For graphs not satisfying the necessary symmetries, we provide an additional method for testing the significance of the graph structure, albeit at a higher computational cost. We conclude with an application to real data from an HIV infection network.
\end{abstract}
\section{Introduction}

Information, diseases, and the adoption of certain behaviors may spread according to a network of relationships connecting susceptible individuals~\cite{AndMay92, christakis2007, Jac08, Mor93, New02}. Natural questions to address include (a) predicting the pathway or scope of a disease; (b) inferring the source; and (c) identifying optimal interventions to slow the spread of the epidemic. The answers to these questions may vary depending on the stochastic mechanism governing the spread of disease between individuals, and/or the rate of recovery.

Algorithms for addressing such questions often assume knowledge of the underlying graph structure~\cite{borgs, brautbar, bubeck2017}. For instance, in the influence maximization problem, the goal is to identify an optimal set of nodes to initially infect in order to propagate a certain behavior as widely as possible \cite{DomRic01, kempe2003}. Due to submodularity of the influence function, one may obtain a constant factor approximation to the influence-maximizing subset using a simple greedy algorithm. However, both the connectivity of the network and edge transmission parameters must be known in order to successfully execute the algorithm \cite{Chen2010a}. Similarly, algorithms for network immunization, which aim to eliminate an epidemic by performing targeted interventions, assume knowledge of the graph \cite{AlbEtal00, CohEtal03, PasVes02}. In real-world applications, however, prior knowledge of the edge structure of the underlying network or parameters of the infection spreading mechanism may be unavailable.

Accordingly, the focus of our paper is the inference problem of identifying the underlying network based on observed infection data. One approach involves employing tools from graphical model estimation, since the graphical model corresponding to joint vectors of infection times coincides with the unknown network~\cite{gomez2016, netrapalli2012}. However, these methods critically leverage the availability of time-stamped data and observations of multiple (i.i.d.) infection processes spreading over the same graph. 
Another line of work concerns reconstructing an infection graph based on the order in which nodes are infected over multiple infection processes, and provides bounds on the number of distinct observations required to recover the edge structure of the graph. These methods are attractive in that they do not assume a particular stochastic spreading model, and are even guaranteed to reconstruct the true network when the observations are chosen adversarially~\cite{angluin2015, huang2017}. On the other hand, data from multiple infections are still assumed to be available.

In contrast to these papers, we are interested in studying scenarios where infection data are available for a \emph{single} snapshot of a single epidemic outbreak. For instance, if the goal is to perform optimal interventions on a network of individuals during an outbreak such as Ebola \cite{DudEtal17}, data may only be available about the infection status of individuals at a given point in time. Although historic data may have been collected concerning the spread of other epidemics on the same network, there is no guarantee that the other diseases will spread according to the same mechanisms, and the network may have changed over time. Instead of employing the aforementioned graph estimation procedures based on observation vectors, we cast the problem in the form of a hypothesis test: Given two candidate graphs, our goal is to identify the graph on which the infection has propagated. This type of problem was previously studied by \cite{milling2015}, who proposed inference procedures for testing an empty graph versus graphs satisfying ``speed and spread'' conditions, and for testing two graphs against each other. However, we substantially generalize their approach, removing a restrictive ``independent neighborhoods'' assumption and allowing for exogenous sources of infection that are not captured by edgewise contagion \cite{myers2012}. A graph testing problem of a somewhat similar flavor may also be found in \cite{bubeck2016}, but their goal is to identify the model of network formation for a random graph, rather than determine the network underlying an epidemic outbreak.

The crux of our approach lies in permutation testing, a notion which dates back to \cite{Fis35}. Permutation tests are classically applied when random variables are exchangeable under the null hypothesis. The test statistic is then computed with respect to random permutations of the observation vector, and the observed statistic is compared to quantiles of the simulated distribution \cite{Goo13}. We adapt this technique in a novel manner to the graph testing problem. The procedure is the same as in classical permutation testing, where we recompute a certain statistic on a set of randomly chosen permutations of the infection ``vector'' recording the observed statuses of the nodes in the graph. Based on quantiles of the empirical histogram, we calculate a rejection rule for the test.

The key idea is that if the null hypothesis corresponds to an empty or complete graph and edge transmission rates are homogeneous, the components of the infection vector are certainly exchangeable. This is an important setting in its own right, since it allows us to determine whether a network structure describes an infection better than random contagion. As derived in our paper, however, the permutation test succeeds more generally under appropriate assumptions incorporating symmetries of both the null graph and alternative graph. We develop a sufficient condition in terms of automorphism groups of the two graphs, which are the subsets of node permutations that map edges to edges in the corresponding graphs.

A scientific motivation for graph hypothesis testing is a case when a practitioner wants to decide whether a disease is spreading according to a fixed hypothesized graph structure (or a particular graph topology) versus completely random transmissions, which would correspond to an empty graph. Another plausible scenario is when a scientist wants to test a long-standing hypothesis that a genetic network follows a particular connectivity pattern, versus a proposed alternative involving the addition or removal of certain edges. In a third setting, one might encounter two distinct graphs representing connections between individuals on different social network platforms, and hypothesize that information diffusion is governed by one graph instead of the other.

Our work significantly broadens the scope of existing approaches, as follows:
(i) we model information spreads more realistically by introducing random spreading external to the network; (ii) we drop the restrictive ``independent neighborhoods'' assumption, which masks any knowledge of the identity of specific individuals in the hypothesized graphs; and (iii) we provide novel algorithms for the graph testing problem in these settings.
Our permutation test is attractive in that it is easy to compute and bypasses the need to estimate any model parameters involved in the spreading process. This also leads to a relatively simple analysis of risk bounds for concrete graph topologies, as developed in our examples.  Notably, the numerical results illustrate settings in which our algorithms succeed, whereas previously existing tests do not lead to meaningful conclusions.

For general graphs, i.e., cases where these symmetries do not exist, we provide a computationally-intensive method for hypothesis testing based on a sufficiently fine discretization of the null hypothesis space using a simple data-processing inequality argument. Importantly, we can test slight differences in edge structure between graphs, which is not possible via permutation tests. This also allows us to a build confidence set for the underlying graph structure, but it is computationally feasible only for small graphs.

The remainder of the paper is organized as follows: In Section~\ref{sec: setup}, we describe the infection spreading model and invariant statistics considered in our theory. In Section~\ref{sec: main results}, we outline our graph testing procedure and state the corresponding theoretical guarantees. Section~\ref{sec: examples} discusses some illustrative special cases and provides explicit risk bounds. In Section~\ref{secCloserLook}, we analyze our test statistics more closely in relation to likelihood ratio tests and Hunt-Stein theory for maximin tests. In Section~\ref{sec:general_graphs}, we discuss our method for testing graphs based on discretization. Section~\ref{secHIV} describes an application of our methods to an HIV infection graph. Section~\ref{sec: discuss} concludes our discussion.
Additional theory, simulations, and proofs are provided in the Appendices.


\section{Problem setup}
\label{sec: setup}

We begin by establishing the notation we will use for infections and graph-based hypothesis testing. We then describe two random infection models for which our main results are applicable, followed by terminology relevant to our invariant test statistics.

\subsection{Infection notation}
Let \(\graphset_{0} = (\vertexset, \edgeset_{0})\) and \(\graphset_{1} = (\vertexset, \edgeset_{1})\) denote two undirected graphs defined over a common set of vertices \(\vertexset = \{1, \ldots , n\}.\)
A random vector of infection statuses \(\infected := \{\infected_{v}\}_{v \in \vertexset}\) consists of entries
\begin{equation*}
\infected_{v}
=
\begin{cases}
1, & \text{if \(v\) is infected,} \\ 
0, & \text{if \(v\) is uninfected,} \\ 
\star, & \text{if \(v\) is censored.}
\end{cases}
\label{eqn: statevec def}
\end{equation*}

Let \(\infected^{1}\) denote the set of infected vertices, and let \(\infected^{0}\) and \(\infected^{\star}\) be defined analogously. We then define the space of possible infection status vectors involving exactly \(k\) infected vertices, $c$ censored vertices, and $n-k-c$ uninfected vertices, as follows:
\begin{equation*}
\infspace_{k, c}
= 
\left\{\infected \in \{0, 1, \star\}^{n}: 
|\infected^{1}| = k, \;  
|\infected^{\star}| = c \right\}.
\label{eqn: infection space}
\end{equation*}

\subsection{Infection models}
\label{SecInfectModels}

Next, we introduce two stochastic models for generating random infection vectors on a graph $\graphset$.
The first model is motivated by the infection literature and borrows elements from first-passage percolation. The second is a conditional Ising model. Although the Ising model is not designed for modeling disease propagation, the two models behave similarly for small values of the spreading parameter $\beta$.
After describing the method for generating infection vectors in both models, we explain the random censoring mechanism. We then briefly discuss the parameter spaces involved in our hypothesis test.

\subsubsection{Stochastic spreading model}

Our main infection model includes the following components:
spreading by contagion along edges of the network, and spreading via factors external to the network. The rates of spreading are governed by nonnegative parameters $\lambda, \beta \in \reals_{+}$.
We assume that an infection spreads on $\graphset$, beginning at time 0, as follows:
\begin{itemize}
\item[(i)] For each vertex $v$, generate an independent random variable $T_v \sim \text{Exp}(\lambda)$.
\item[(ii)] For each edge $(u,v) \in \edgeset$, generate an independent random variable
$T_{uv} \sim \text{Exp}(\beta)$.
\item[(iii)] For each vertex \(v\), define the infection time $t_v := \min_{u \in N(v)} \{t_u + T_{uv}\} \wedge T_v$, where $N(v)$ is the set of neighbors of $v$.
\end{itemize} 
In other words, each vertex contracts the disease via random infection according to an exponential random variable with rate $\lambda$, and contracts the disease from an infected neighbor at rate $\beta$; in particular, when $\graphset$ is an empty graph, each successively infected node is chosen uniformly at random. The presence of the parameter $\lambda$ has the interpretation that some factors involved in spreading the disease may not be fully accounted for in the hypothesized graphs. We define \(t_{k}\) to be the time at which the \(k^\text{th}\) reporting node becomes infected, and we suppose we observe the state of the graph at some time \(t\) such that \(t_{k} \leq t \leq t_{k + 1}\). Note that this differs slightly from the setting of the papers \cite{kesten1993, milling2015, shah2011}, where the time \(t\) at which the node infection statuses are observed is assumed to be a known constant.

In our analysis, we frequently use the notion of \emph{paths}.
A path \(P = (P_{1}, \ldots, P_{k})\) is an ordered set of vertices, where vertex \(P_{i}\) is the \(i^{\text{th}}\) vertex to be infected.
A given infection \(J\) in \(\infspace_{k, 0}\) corresponds to \(k!\) possible paths, which we collect into a set \(\pathset(J)\).
We also refer to the path random vector as \(\pathvar\).
Thus, we have the relation
\[
\prob(\infected = J) 
=
\sum_{P \in \pathset(J)} \prob(\pathvar = P),
\]
which we will use repeatedly in our derivations.


Finally, note that without loss of generality, we can set $\lambda = 1$ by rescaling $\beta$: Suppose \(k\) vertices have been infected in running the process until time \(t\), and we want to determine the \((k + 1)^{\text{th}}\) infected vertex.
Since all of the \(T_{v}\)'s and \(T_{uv}\)'s are independent exponential random variables, the probability that an uninfected vertex $v$ is infected next is
\[
\prob\left(v \text{ is infected next}\right)
=
\frac{\lambda + \beta N_{t, v}}{\lambda(n - k) + \beta N_{t}}
=
\frac{1 + (\beta/\lambda) N_{t, v}}{(n - k) + (\beta/\lambda) N_{t}},
\]
where \(N_{t}\) is the number of edges with one infected vertex at time $t$, and \(N_{t, v}\) is the number of infected neighbors of \(v\). This is equivalent to the parametrization $(\lambda', \beta') = (1, \beta/\lambda)$.

\subsubsection{Conditional Ising model}

We now describe the second random infection model we will consider, which can also be parametrized by $\beta \in \reals^+$. Let the \emph{energy} of an infection \(J\) in \(\infspace_{k, 0}\) be defined by
\[
\energy(J)
:=
-\sum_{(u, v) \in \edgeset} J_{u} J_{v}.
\]
We define the uncensored conditional 01-Ising model as
\begin{align}
\label{EqnProbIsing}
\prob_{\ising}^{u}\left(\infected = J\right) 
:=
\frac{\exp(-\beta \energy(J))}{\sum_{J' \in \infspace_{k, 0}} \exp(-\beta \energy(J'))}
=
\frac{1}{Z_{\ising}(\beta)} \exp(-\beta H(J)). 
\end{align}
Note that we use the term ``conditional'' because \(k\) of the vertices are known to be in state \(1\). Furthermore, we use ``\(01\)'' to denote the fact that the states are \(0\) and \(1\).
We use the superscript \(u\) to emphasize the fact that the infection is uncensored.

As we shall see, infections generated by the conditional \(01\)-Ising model share properties with the stochastic spreading model. However, note that unlike the stochastic spreading model, the conditional Ising model does not correspond to a simple procedure for generating successive infection events as a disease propagates over $\graphset$. We discuss this model because it helps highlight the properties of the random infection distribution that allow our permutation test to succeed.


\subsubsection{Censoring}

We assume that censored nodes are selected uniformly at random among the nodes in the graph. Our hypothesis tests are designed to be valid, conditional on the fact that $c$ nodes are censored. In this subsection, we define some additional notation to express the probabilities of observing particular infection vectors, conditioned on the fact that $c$ nodes are censored.

Given an infection vector \(J\) that may contain censored components, we define the set \(\uncensored(J)\) to consist of all infection vectors \(J'\) such that \(J'\) has no censored vertices and \(J_{v} = J'_{v}\) for each uncensored vertex \(v\). Thus, $J'$ can have between $k$ and $k+c$ infected vertices. We then define the measure
\begin{equation}
\measure(J; \beta) 
=
\sum_{J' \in \uncensored(J)}
\frac{1}{\binom{n}{c}} \prob^{u}\left(\infected = J'\right),
\label{eqnMeasure}
\end{equation}
where \(\prob^{u}(\infected = J')\) denotes the probability of an uncensored infection from either of the two models described above. In other words, $\measure$ simply computes the probability of observing a vector $J$ after randomly censoring $c$ nodes.
If we define the normalizing constant
\begin{equation}
Z(\beta)
:=
\sum_{J \in \infspace_{k, c}}
\measure(J; \beta) 
=
\sum_{J \in \infspace_{k, c}}
\sum_{J' \in \uncensored(J)}
\frac{1}{\binom{n}{c}} \prob^{u}(\infected = J'),
\label{eqnPartitionFunction}
\end{equation}
which provides the probability of observing any vector with $k$ infected and $c$ censored nodes,
we see that
\begin{equation}
\prob\left(\infected = J\right)
=
\frac{1}{Z(\beta)} \measure(J; \beta)
\label{eqnProbWithCensoring}
\end{equation}
is the probability of observing infection vector $J$, conditioned on $c$ nodes having been censored.
Note that in the case when \(\graphset\) is the empty graph, the values of $\measure(J; \beta)$ are the same for all values of $J$, so \(\prob\left(\infected\right) = 1 / |\infspace_{k,c}| = 1 / \binom{n}{k \;\; c}\), i.e., the inverse of the multinomial coefficient for choosing \(k\) and \(c\) items from a set of \(n\).
Finally, now that we have a full model, a pictorial description of the stochastic spreading process is provided in Figure~\ref{fig: infections}.

\begin{figure}[!htb]
\minipage{0.32\textwidth}
  \includegraphics[width=\linewidth]{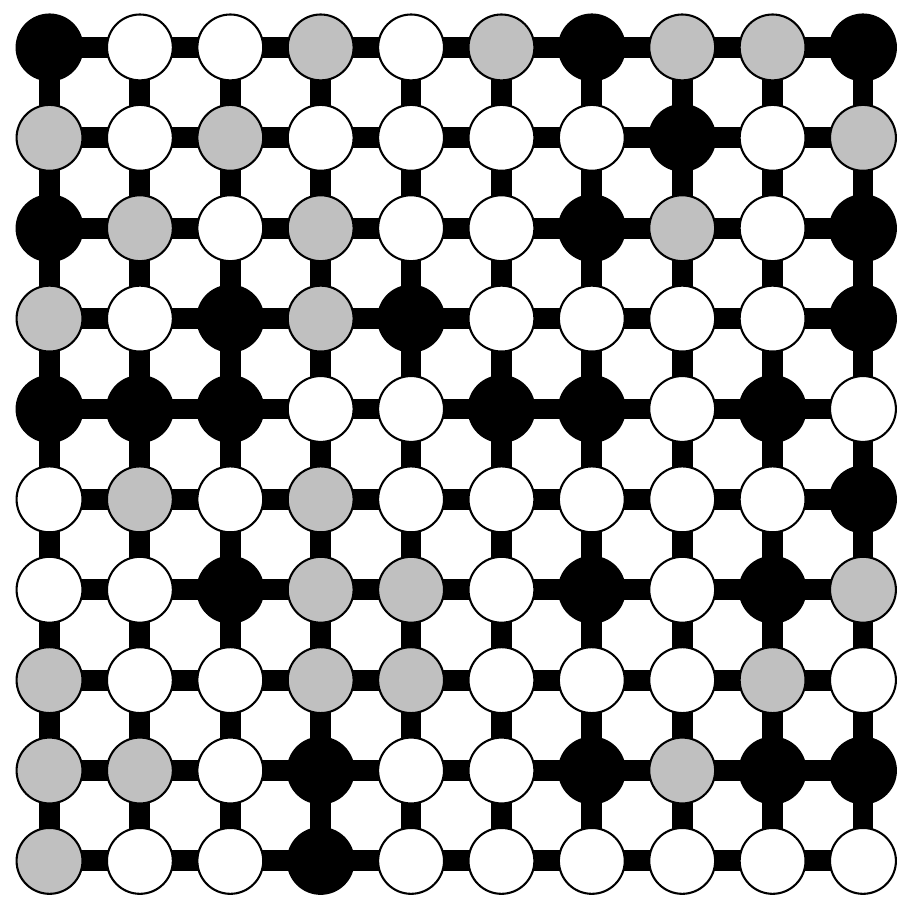}
\endminipage\hfill%
\minipage{0.32\textwidth}
  \includegraphics[width=\linewidth]{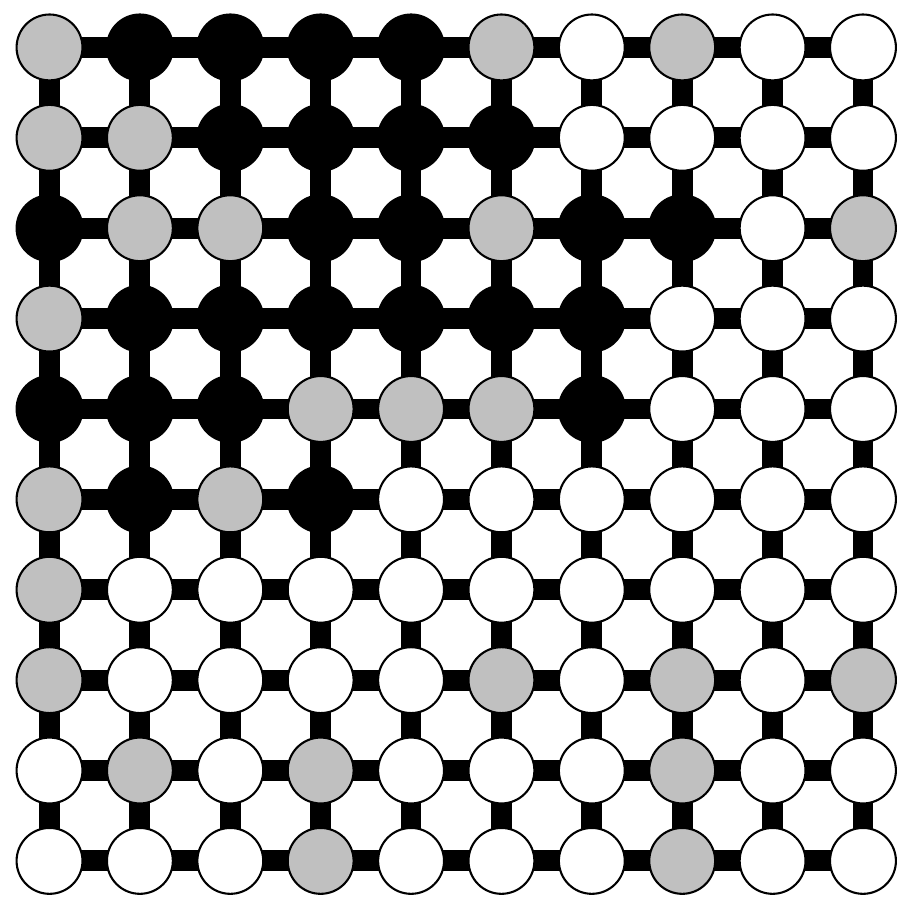}
\endminipage\hfill
\minipage{0.32\textwidth}%
  \includegraphics[width=\linewidth]{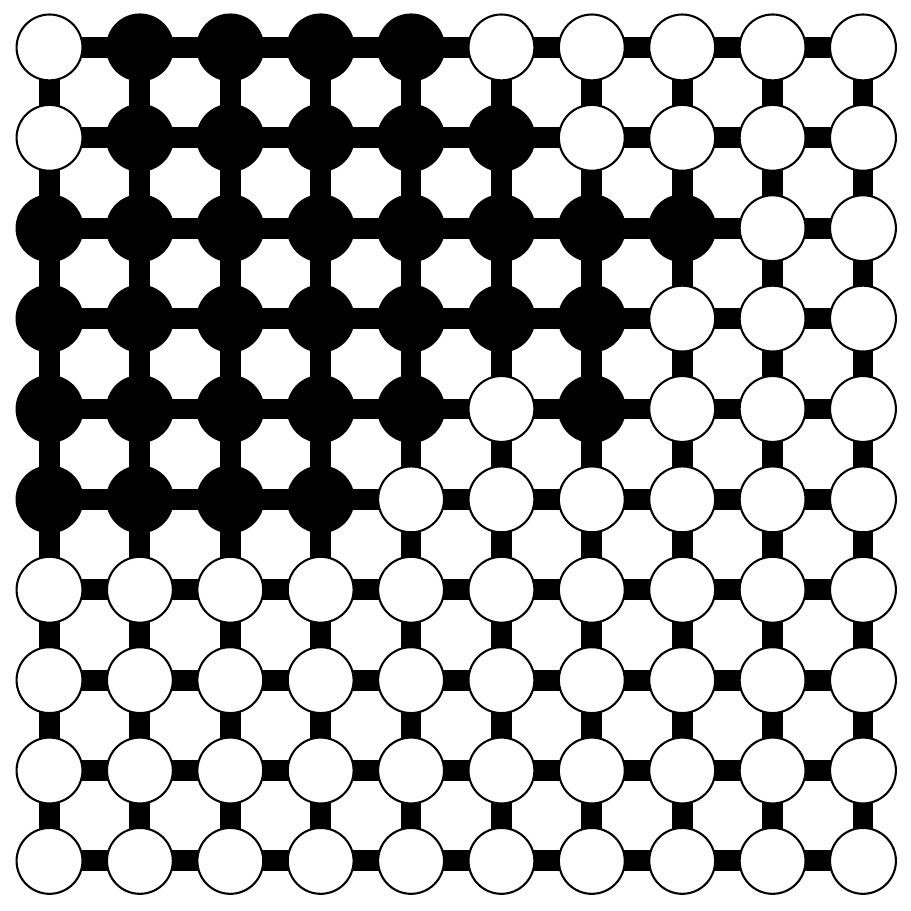}
\endminipage
\caption{Realizations of infection processes on the \(10 \times 10\) grid.
Uninfected vertices are white, censored vertices are gray, and infected vertices are black.
The first panel corresponds to \(\beta = 0\), so the infection is equivalent to uniformly random spreading.
The second and third panels depict the same infection with a large \(\beta\), with and without censoring.}
\label{fig: infections}
\end{figure}


\subsubsection{Parameter space and hypothesis tests}

We now describe the parameter space for our hypothesis tests in more detail. Suppose the null hypothesis corresponds to a spread over the graph $\graphset_0$ and the alternative hypothesis corresponds to a spread over the graph $\graphset_1$.
Let \(\paramset_{0}\) and \(\paramset_{1}\) be subsets of $[0, \infty)$, which are the sets of possible $\beta$ values in our null and alternative hypothesis classes.
Stated differently, each graph-parameter pair \((\graphset, \beta)\) parametrizes a probability measure \(\prob_{\beta}\), which can be thought of as an element of the probability simplex with entries indexed by the infections of \(\infspace_{k, c}\), as defined by equation~\eqref{eqnProbWithCensoring}.
Note that we sometimes write our parameters as \(\theta = (\graphset, \beta)\), and the resulting parameter space is then \(\Theta\). 

In the case of the stochastic spreading model (SSM), we thus use the shorthand
\begin{align*}
&\begin{aligned}
H_{0}
&: \infected \sim \ssm(\graphset_{0}, \beta_{0}) \text{ for } \beta_{0} \in \paramset_{0}, \; \graphset_{0} \in \graphspace_{0}, \\
H_{1}
&: \infected \sim \ssm(\graphset_{1}, \beta_{1}) \text{ for } \beta_{1} \in \paramset_{1}, \; \graphset_{1} \in \graphspace_{1}.
\end{aligned}
\end{align*}
Our core theory on permutation testing will focus on ``simple'' hypothesis tests, where \(\graphspace_{0}\) and \(\graphspace_{1}\) are singleton sets, and $\paramset_0 = \paramset_1 = \{\beta\}$, for some $\beta \neq 0$.

In general, a reasonable choice of parameter sets for distinguishing two graph structures $\graphspace_0 = \{\graphset_0\}$ and $\graphspace_1 = \{\graphset_1\}$ might be
\(\paramset_{0} = [0, \infty]\) and \(\paramset_{1} = (0, \infty]\).
However, in order to provably obtain non-trivial power, one would need to designate an indifference region, e.g., \(\paramset_{1} = [c, \infty]\) for some \(c > 0\).

A critical function \(\psi: \infspace_{k, c} \rightarrow \{0, 1\}\) may be used to indicate the result of a test.
We will be interested in bounding the risk, which is the sum of Type I and Type II errors:
\begin{equation*}
R_{k, c}(\psi; \beta_{0}, \beta_{1})
=
\prob_{0, \beta_{0}}\left(\psi(\infected) = 1\right) + \prob_{1, \beta_{1}}\left(\psi(\infected) = 0\right),
\label{eqn: risk}
\end{equation*}
defined for $\beta_0 \in \paramset_0$ and $\beta_1 \in \paramset_1$, where $\prob_{0, \beta_0}$ and $\prob_{1, \beta_1}$ are the measures corresponding to $(\graphset_0, \beta_0)$ and $(\graphset_1, \beta_1)$, respectively.


\subsection{Permutation-invariant statistics}
\label{secInvariant}

A natural statistic to consider for the purpose of graph testing is the likelihood ratio. 
For the stochastic spreading model, the likelihood ratio is often difficult to compute and depends on \(\beta\) in a nontrivial manner, making the theoretical derivations somewhat challenging. Our main focus will be on a class of statistics that are invariant under a group of permutations, which allow us to perform permutation testing based on symmetries in the graph sets. We first introduce some terminology regarding permutations and group actions, and then introduce a class of invariant statistics that will be central to our analysis.


\subsubsection{Permutations and group actions}

Recall that a \emph{graph automorphism}
\(\graphset = (\vertexset, \edgeset)\) is an element $\phi$ of the permutation group $S_n$ such that \((u, v) \in \edgeset\) if and only if \((\phi(u), \phi(v)) \in \edgeset\). For simple hypotheses, we denote the automorphism groups of $\graphset_0$ and $\graphset_1$ by \(\Pi_{0} = \Aut(\graphset_0)\) and \(\Pi_{1} = \Aut(\graphset_1)\), respectively.

We also need to define the \emph{action} of a permutation on vertices, graphs, and infections.
The action of a permutation \(\pi\) on a vertex \(u\) is simply the image \(\pi(u)\).
This is easily extended to tuples and subsets of vertices by applying \(\pi\) to the underlying vertices.
A specific example is the action on edges of the graph:
\[
\pi \edgeset
=
\left\{(\pi(u), \pi(v)): (u, v) \in \edgeset\right\}.
\]
The action of \(\pi\) on a graph \(\graphset = (\vertexset, \edgeset)\) is then defined to be
\[
\pi \graphset
:=
(\pi \vertexset, \pi \edgeset)
=
(\vertexset, \pi \edgeset).
\]
Another natural extension is to define the action of a set of permutations on a set of graphs:
\[
\Pi \graphspace
=
\left\{\pi \graphset: \pi \in \Pi \text{ and } \graphset \in \graphspace\right\}.
\]
If \(\graphspace_{i} = S_{n}\{\graphset_{i}\}\), we say that hypothesis \(i\) corresponds to a hypothesis of a particular graph \emph{topology}, since all node labelings are included in the set. 
We also define the action \(\Pi \Theta_{i} = \Pi \graphspace_{i} \times A_{i}\).
Finally, we define the action of a permutation \(\pi\) on an infection \(J\):
\[
\pi J
:=
\left(J_{\pi^{-1}(1)}, \ldots, J_{\pi^{-1}(n)}\right).
\]
In other words, the infection status of the image vertex \(\pi(u)\) is the infection status of \(u\) under \(J\).

\subsubsection{Invariant statistics}

The theory presented in our paper applies to the following class of statistics:
\begin{definition}
Suppose $\Pi$ is a subgroup of $S_n$. A statistic \(S: \infspace_{k,c} \rightarrow \mathbb{R}\) is \(\Pi\)-\emph{invariant} if
\(S(J) = S(\pi J)\) for any \(J \in \infspace_{k,c}\) and \(\pi \in \Pi\).
\end{definition}

We now describe some of the invariant statistics we use in our exposition. We first define the \emph{edges-within} statistic, corresponding to the number of edges within the subgraph of $\graphset$ induced by infected vertices, as follows:
\begin{equation*}
W_{\graphset}(J) 
:= 
\sum_{(u, v) \in \edgeset} \ind\{J_u = J_v = 1\}.
\label{eqn: edges within}
\end{equation*}
Intuitively, the value of $W_{\graphset}(J)$ should be comparatively larger if $\graphset$ corresponds to the graph underlying the infection. In our permutation test, we will compute the edges-within statistic with respect to the graph $\graphset_1$ appearing in the alternative hypothesis (in the case of a simple test), so we reject $H_0$ when $W_{1}(J) := W_{\graphset_1}(J)$ exceeds a certain threshold.

For Ising models, note that 
\(\energy(J) = -W(J)\), ignoring censored vertices.
We derive the invariance of the statistics $W$ and $\energy$ under the permutation group $\Pi = \Aut(\graphset)$ in the Appendix.


\subsection{Test statistics and thresholds}
\label{subsec:setup:testStatisticsThresholds}

Finally, we provide some general notation for defining rejection regions. Consider a test statistic \(S: \infspace_{k, c} \to \reals\).
Without loss of generality, we reject the null hypothesis for large values of \(S\).
For a fixed value of \(\beta\), we define the \((1 - \alpha)\)-quantile of \(S(\infected)\) to be
\[
t_{\alpha, \beta}
:=
\sup\left\{
t \in \support(S): 
\prob_{\beta}\left(S(\infected) \geq t\right)
> 
\alpha
\right\},
\]
where \(\support(S)\) is the support of \(S\), i.e., the set of values that the discrete random variable \(S\) takes with positive probability.
Note that we have the two bounds	
\begin{align}
& \begin{aligned}
\prob_{\beta}\left(S(\infected) > t_{\alpha, \beta}\right)
&\leq 
\alpha, \\
\prob_{\beta}\left(S(\infected) \geq t_{\alpha, \beta}\right)
&>
\alpha,
\label{eqnGeqThreshold}
\end{aligned}
\end{align}
i.e., the strictness of the inequality in the event determines the direction of the inequality in the probability bound.

Our discretization-based tests employ simulated null distributions of the test statistic \(S\) in order to approximate \(t_{\alpha, \beta}\).
To quantify the exact uncertainty introduced by the simulations, we define the empirical \((1 - \alpha)\)-quantile \(\hat{t}_{\alpha}:\infspace_{k, c}^{N_{\sims}} \to \reals\) to be 
\[
\hat{t}_{\alpha, \beta}
:=
\sup\left\{t \in \support(S): 
\frac{1}{N_{\sims}} \sum_{i = 1}^{N_{\sims}}
\ind\{S(I_{i}) \geq t\}
> \alpha
\right\}.
\]

\section{Main theoretical results}
\label{sec: main results}

Our theoretical results for permutation testing are motivated by the following observation: When \(\graphset_{0}\) is the empty graph, the coordinates of the infection vector $\infected$ are exchangeable. Hence, we may conduct a valid permutation test based on any test statistic computed with respect to $\infected$, where the rejection rule is given by the quantiles of the distribution of $\pi \infected$, with $\pi \sim \text{Uniform}(S_n)$. However, the permutation test remains valid in somewhat more general settings.

Throughout this section, we will assume the setting of simple hypothesis testing, where $\graphspace_0 = \{\graphset_0\}$ and $\graphspace_1 = \{\graphset_1\}$ are singleton sets and $\paramset_0 = \paramset_1 = \{\beta\}$. We will develop a sufficient condition, stated in terms of the interplay between the automorphism groups $\Pi_0$ and $\Pi_1$, which guarantees the validity of a permutation test applied to any $\Pi_1$-invariant statistic. A generalization of these results to composite hypothesis testing is provided in Appendix~\ref{appFurtherTheory}.

We let \(S\) denote a \(\Pi_{1}\)-invariant statistic, as defined in Section~\ref{secInvariant}. We also define
\begin{equation*}
\Pi_{10} := \Pi_{1} \Pi_{0} =  \{\pi_{1} \pi_{0}: \pi_{i} \in \Pi_{i}\}.
\label{eqn: def pi}
\end{equation*}
The following key theorem shows that the distribution of the test statistic is the same when applied to a random permutation of the infection vector, provided $\Pi_{10} = S_n$:

\begin{theorem}
\label{ThmPermStat}
Suppose that for any \(\pi_{0}\) in \(\Pi_{0}\), the distribution of \(\infected\) satisfies
\begin{equation}
\prob_{0}(\infected = J)
=
\prob_{0}(\pi_{0} \infected = J).
\label{eqnPermCond}
\end{equation}
Let \(\pi\) be drawn uniformly from \(S_{n}\).
If \(\Pi_{10} = S_{n}\), 
the statistics \(S(\infected)\) and \(S(\pi \infected)\) have the same distribution under the null hypothesis. 
\end{theorem}


Showing that equation~\eqref{eqnPermCond} holds for our models is straightforward, and we do this in the Appendix.
In particular, the condition $\Pi_{10} = S_n$ holds when $\graphset_0$ is the empty graph, since $\Pi_0 = S_n$ in that case. The next result shows that the condition described in Theorem~\ref{ThmPermStat} is sufficient to guarantee the success of a straightforward permutation test, described in Algorithm~\ref{AlgPermExact}:

\begin{theorem}
\label{ThmPermTest}
Suppose equation~\eqref{eqnPermCond} holds and $\Pi_{10} = S_n$. The permutation test described in Algorithm~\ref{AlgPermExact} controls Type I error at level $\alpha$.
\end{theorem}

\begin{algorithm}[!h]
\SetKwInOut{Input}{Input}
\Input{Type I error tolerance $\alpha > 0$, observed infection vector $\infected$}
For each $\pi \in S_n$, compute the statistic $S(\pi \infected)$

Determine the threshold $t_\alpha$ such that
\begin{equation*}
t_{\alpha}
=
\sup\left\{t \in \support(S):
\frac{1}{n!} \sum_{\pi \in S_n} \ind \{S(\pi \infected) \geq t\} > \alpha
\right\}
\end{equation*}

Reject $H_0$ if and only if $S(\infected) > t_\alpha$
\caption{Permutation test (exact)}
\label{AlgPermExact}
\end{algorithm}


\begin{remark}
The proof of Theorem~\ref{ThmPermTest} critically leverages the property
\begin{equation}
\label{EqnPermEquiv}
S(\infected) \stackrel{d}{=} S(\pi \infected), \qquad \text{where } \pi \sim \text{Uniform}(S_n).
\end{equation}
Note that this property would clearly hold in the case when the components of $\infected$ are exchangeable, since we have $S(\infected) \stackrel{d}{=} S(\pi \infected)$ for any fixed $\pi \in S_n$ in that case. Furthermore, under the ``independent neighborhoods condition'' invoked by \cite{milling2015}, condition~\eqref{EqnPermEquiv} holds, as well.
However, the alternative graph \(\graphset_{1}\) is randomly generated in such settings, so the statistic \(S\) is also random. We discuss this more precisely in the Appendix. 
\end{remark}

For large values of $n$, it is undesirable to compute $S(\pi \infected)$ for all permutations $\pi \in S_n$. Instead, we may approximate the rejection threshold $t_\alpha$ for the permutation test using Monte Carlo simulation, leading to Algorithm~\ref{AlgPermApprox}. As an immediate corollary to Theorem~\ref{ThmPermTest}, Algorithm~\ref{AlgPermApprox} is asymptotically accurate as $N_{\sims} \rightarrow \infty$.

\begin{algorithm}[!h]
\SetKwInOut{Input}{Input}
\Input{Type I error tolerance $\alpha > 0$, integer $N_{\sims} \ge 1$, observed infection vector $\infected$}
Draw $\pi_1, \dots, \pi_{N_{\sims}} \stackrel{i.i.d.}{\sim} \text{Uniform}(S_n)$ and compute the statistics $S(\pi_i \infected)$

Determine a threshold $\hat{t}_\alpha$ such that
\begin{equation*}
\hat{t}_{\alpha}
=
\sup\left\{
t \in \support(S):
\frac{1}{N_{\sims}} \sum_{i = 1}^{N_{\sims}} \ind\{S(\pi_i \infected) \geq t\} > \alpha
\right\}
\end{equation*}

Reject $H_0$ if and only if $S(\infected) > \hat{t}_\alpha$
\caption{Permutation test (approximate)}
\label{AlgPermApprox}
\end{algorithm}

\begin{remark}
Note that the permutation tests described in Algorithms~\ref{AlgPermExact} and~\ref{AlgPermApprox} are very simple to execute and do not involve approximating the parameters $\lambda$ or $\beta$ in any way. Rather, the algorithms exploit differences in the symmetry structures of $\graphset_{0}$ and $\graphset_{1}$. As a caveat, the usefulness of the guarantee in Theorem~\ref{ThmPermTest} also depends on properties of the graphs $\graphset_0$ and $\graphset_1$ and their relationship to the test statistic $S$. In particular, if $S = W_{1}$ is the edges-within statistic and $\graphset_1$ is the empty  graph, we always have $S = 0$. Thus, the threshold for the permutation test would be $t_\alpha = 0$, and the test would never reject $H_0$. Of course, this is a valid level-$\alpha$ test, but it has power \(0\). As seen in the simulations of the Appendix, this may also be a pitfall of the algorithms suggested by \cite{milling2015}. However, we prove rigorously in Section~\ref{sec: examples} below that our permutation test results in meaningful hypothesis testing procedures with reasonable risk bounds for a variety of interesting scenarios.
\end{remark}


\section{Examples and risk bounds}
\label{sec: examples}

We now provide several examples to illustrate the use of Theorem~\ref{ThmPermTest}. We focus on the cases when one graph is the star graph and the other is a vertex-transitive graph. In these cases, we are able to compute interpretable risk bounds for our permutation test; our calculations are valid under the stochastic spreading model, which we assume to be the setting for all the risk bounds computed in this section. We also assume a simple hypothesis testing scenario, where we only specify the parameter $\beta$ for $H_1$ and let \(\beta\) be arbitrary for \(H_{0}\). 
Also, we compute the risk for Algorithm~\ref{AlgPermExact}, although a finite-simulation result could also be obtained for Algorithm~\ref{AlgPermApprox} via a union bound argument.

The following corollary to Theorem~\ref{ThmPermStat} will be useful in our development. It implies that the risk incurred by testing \(\graphset_{0}\) against \(\graphset_{1}\) is exactly equal to the risk incurred by testing the empty graph against \(\graphset_{1}\):

\begin{corollary}
Suppose $S$ is a $\Pi_1$-invariant statistic and $\Pi_{10} = S_n$. 
The risk of any test based on $S$ is equal to the risk of the same test computed with respect to the null hypothesis $H_0'$ involving the empty graph.
\label{CorRisk}
\end{corollary}

In our examples, we analyze the simple case of a star graph versus a graph such that \(\Pi_{10} = S_{n}\).
These graphs are exactly the vertex transitive graphs. Recall the following definition~\cite{godsil2013}:
\begin{definition}
\label{DefVertTran}
A graph $\graphset$ is \emph{vertex-transitive} if every pair of vertices is equivalent under some element of $\Aut(\graphset)$; i.e., for any $u,v \in V(\graphset)$, we have $\pi(u) = v$ for some $\pi \in \Aut(\graphset)$.
\end{definition}
\noindent Basic examples of vertex-transitive graphs include the cycle graph and the toroidal grid.

\begin{figure}[!htb]
\minipage{0.32\textwidth}
  \includegraphics[width=\linewidth]{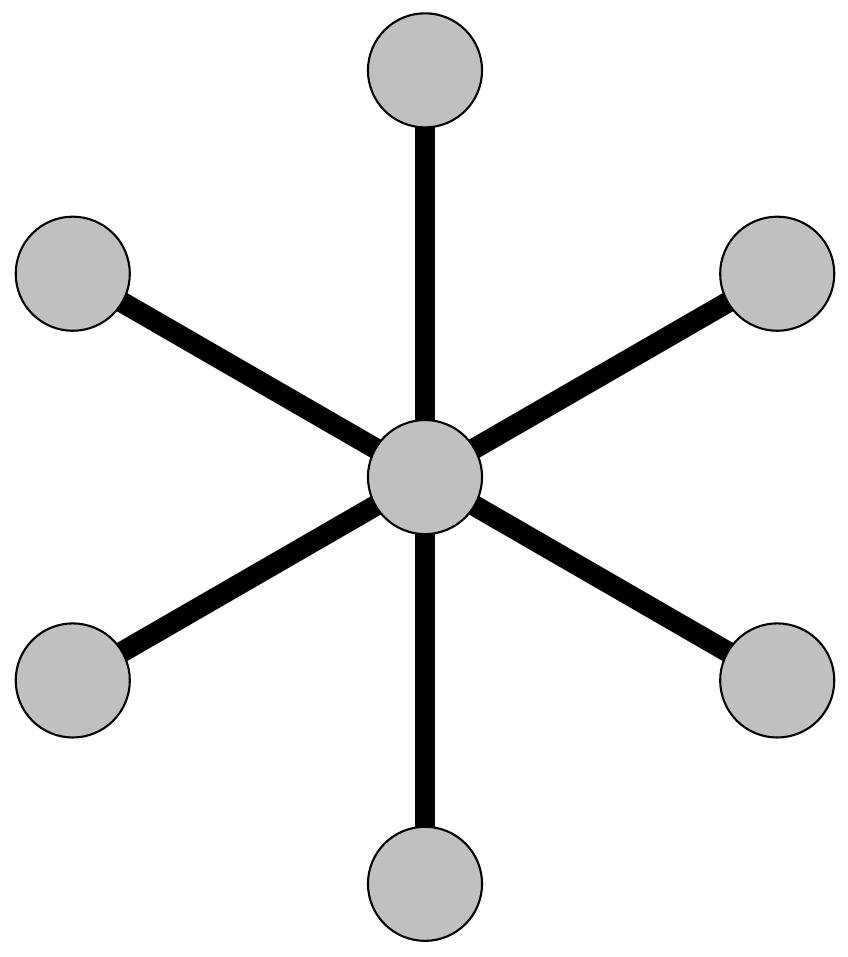}
\endminipage\hfill%
\minipage{0.32\textwidth}
  \includegraphics[width=\linewidth]{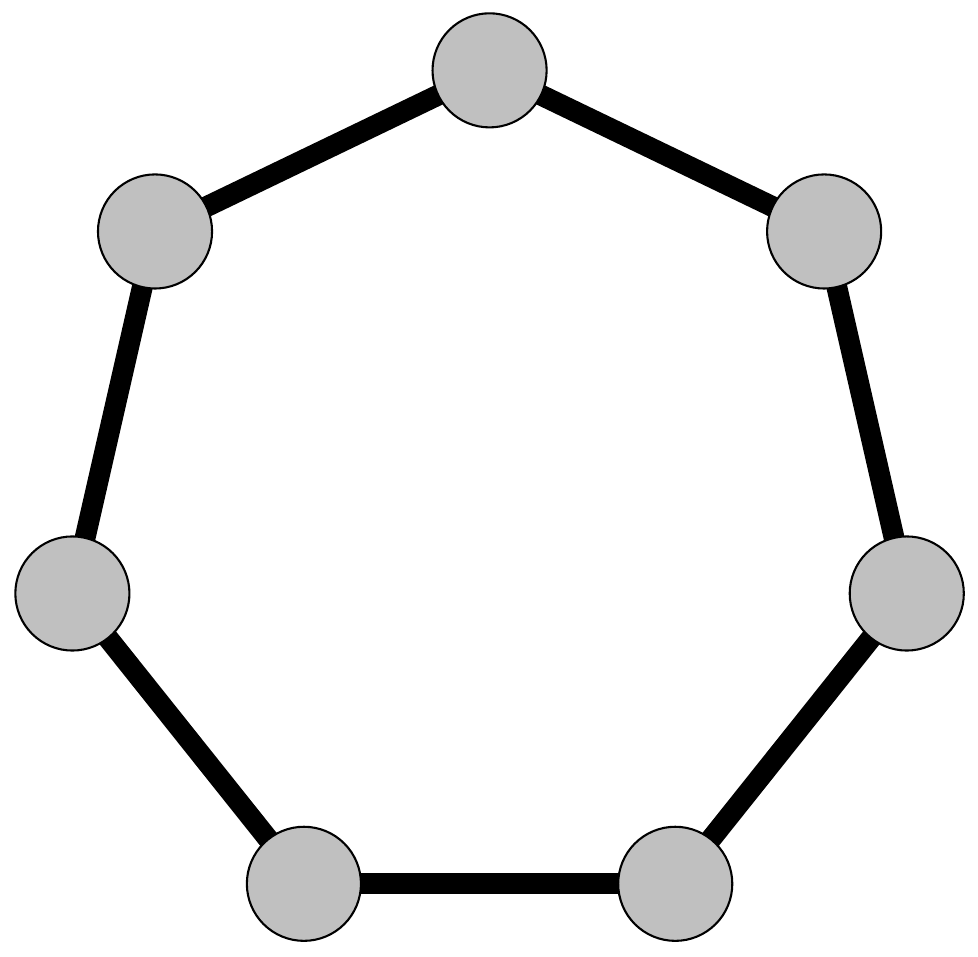}
\endminipage\hfill
\minipage{0.32\textwidth}%
  \includegraphics[width=\linewidth]{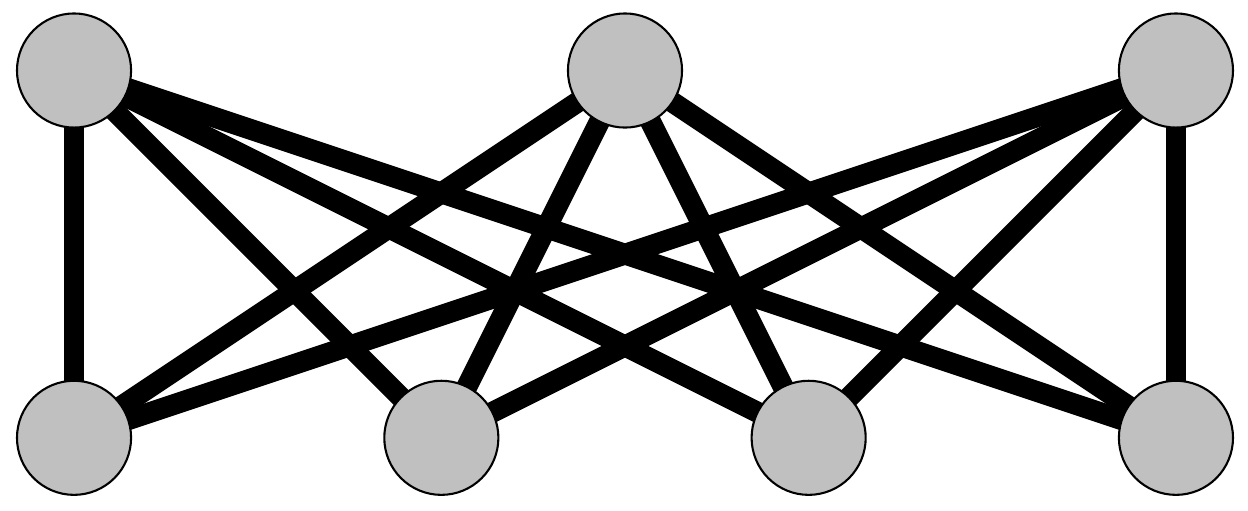}
\endminipage
\caption{A \(6\)-star, a \(7\)-cycle, and the complete bipartite graph \(K_{3, 4}\). 
If $\graphset_0$ is the \(7\)-star and $\graphset_1$ is the \(7\)-cycle, then \(\Pi = S_{7}\).
If $\graphset_0$ is the 7-star and \(\graphset_{1} = K_{3, 4}\), then \(\Pi \neq S_{7}\).}
\label{fig: graphs}
\end{figure}

\subsection{The star graph as $\graphset_0$}

First, let \(\graphset_{0}\) be the star graph on \(n\) vertices. Without loss of generality, let vertex \(1\) be the center vertex. Note that \(\Pi_{0} \cong S_{n - 1}\). In fact, \(\Pi_{10} = S_{n}\) whenever \(\Pi_{1}\) contains permutations mapping vertex \(1\) to any other vertex, which is equivalent to the definition of vertex transitivity. We summarize this observation in the following corollary:

\begin{corollary}
Let \(\graphset_{0}\) be the star graph and suppose \(\graphset_{1}\) is vertex-transitive.
Let \(\pi \sim \text{Uniform}(S_{n})\),
and suppose \(S\) is a \(\Pi_{1}\)-invariant statistic.
Then \(S( \infected) \stackrel{d}{=} S(\pi \infected)\) under the null hypothesis, and the permutation test described in Algorithm~\ref{AlgPermExact} controls the Type I error at level $\alpha$. A similar result holds when $\graphset_1$ is the star graph and $\graphset_0$ is vertex-transitive. 
\label{cor: vertex transitive}
\end{corollary}

We now turn to risk bounds. Before we present the bounds, we provide some motivation.
When \(\beta\) is small, the two hypothesis spaces are close together, so the risk will be large; thus, we focus instead on the regime of moderate to large \(\beta\).
When the null hypothesis is true, the infection still looks uniformly random when viewing \(\graphset_{1}\), as stated in Corollary~\ref{CorRisk}.
Using a standard concentration bound, we then may verify that \(t_{\alpha}\) is 
\(O(k^{2}/n + \sqrt{k})\), where the two terms correspond to the expected value and the variance of \(W_{1}(\infected)\) under the null. When the alternative is true, the infected vertices should induce a connected subgraph of \(\graphset_{1}\), with high probability. 
Thus, the expected value of \(W_{1}(\infected)\)  is \(\Omega(k)\).
Finally, we use another standard concentration bound to obtain an upper bound on the probability that \(W_{1}(\infected) \leq t_{\alpha}\) under the alternative.

Recall that all the vertices of a vertex-transitive graph have the same degree, which we denote by $D$. It is natural that the risk depends on $D$, since larger values of $D$ allow for more variation in $W_{1}(\infected)$.
Let \(N_{t, \min}\) denote the minimum possible cut between the \(t\) infected vertices and \(n - t\) uninfected vertices at time $t$.
We define the function
\begin{equation*}
H(\beta)
=
\prod_{m = 1}^{k - 1}
\frac{\beta}{n - m + \beta N_{t, \min}}.
\label{eqn: H}
\end{equation*}
We also define a \emph{cascade} on \(k\) vertices to be a surjective map 
\(f: \vertexset \to \{0, \ldots, k\}\),
such that
\begin{itemize}
\item[(i)] \(v\) is uninfected when \(f(v) = 0\),
\item[(ii)] \(v\) is the \(i^\text{th}\) vertex infected when \(f(v) = i\), and
\item[(iii)] if $f(v) = i$, then $v$ must be adjacent to one of the first \(i - 1\) infected nodes.
\end{itemize}
Let \(\scriptC_{k}(u, v)\) denote the set of cascades on \(k\) vertices such that both \(u\) and \(v\) are infected, and let $C_{k} := \min_{(u, v) \in \edgeset_{1}} |\scriptC_{k}(u, v)|$. We have the following bound:
\begin{proposition}
Suppose \(\graphset_{1}\) is a connected vertex-transitive graph with degree \(D\).
Let \(\psi_{W, \alpha}\) be the level-$\alpha$ permutation test based on the edges-within statistic $W_{1}$. Then
\begin{align*}
& \begin{aligned}
R_{k, 0}(\psi_{W, \alpha}, \beta)
\leq \alpha
+  
\exp\left\{-\frac{2}{k D^{2}}\left(\frac{D}{2} C_{k} H(\beta)
-  \frac{D k(k - 1)}{2 (n - 1)} - \sqrt{\frac{kD^2}{2} \log \frac{1}{\alpha}}\right)^{2}\right\}.
   \end{aligned}
\end{align*}
\label{PropStarRisk}
\end{proposition}

\begin{remark}
Note that $H(\beta)$ is increasing in $\beta$, so the risk bound in Proposition~\ref{PropStarRisk} decreases as $\beta$ increases. This agrees with intuition, since higher values of $\beta$ correspond to a higher chance that the infection propagates via edges rather than by random infections. Thus, the graphs $\graphset_0$ and $\graphset_1$ should be easier to distinguish.
\end{remark}

For a cycle graph with $k < n / 2$, we have the following result:

\begin{corollary}
Let \(\graphset_{1}\) be the \(n\)-cycle. Then $C_k = (k-1) 2^{k-1}$ and $D = 2$, so
\begin{align*}
& \begin{aligned}
R_{k, 0}(\psi_{W,\alpha}, \beta) 
&\le 
\alpha + \exp\Bigg\{-\frac{1}{2k} \Bigg((k-1) 2^{k-1} \prod_{m=1}^{k-1} \frac{\beta}{n-m+2\beta} \\
& \qquad - \frac{k(k-1)}{n-1} - \sqrt{2k\log\left(\frac{1}{\alpha}\right)}\Bigg)^{2}\Bigg\}.
   \end{aligned}
\end{align*}
In particular, if $\alpha =\exp(-C k / 2)$ and 
\(k / n  + C \leq 1 - \epsilon\) for some \(C > 0\) and \(\epsilon > 0\),
then there exist \(C', C'' > 0\) such that 
\begin{equation*}
\lim_{\beta \rightarrow \infty} R_{k, 0}(\psi_{W, \alpha}, \beta) 
\le 
C'\exp(-C'' k).
\end{equation*}
\label{CorStarRisk0}
\end{corollary}

The last statement reveals that as \(\beta \to \infty\), the risk will vanish for sufficiently large values of $k$. However, if the fraction $k/n$ of infected nodes becomes too large, the two hypotheses are again difficult to distinguish.

\subsection{The star graph as $\graphset_1$}
\label{SecStarAlt}

We now consider the case when $\graphset_1$ is the star graph on $n$ vertices. Again,
let vertex \(1\) denote the center of the star.
Perhaps unsurprisingly, it turns out that the maximum likelihood estimator and a test based on the edges-within statistic reduce to the same decision rule, depending on whether vertex \(1\) is included in the infected set:

\begin{proposition}
Let \(c = 0\).
Suppose $\graphset_0$ is the empty graph. Maximum likelihood estimation is equivalent to the center indicator test \(C = \ind\{\infected_{1} = 1\}\), which is in turn equivalent to permutation testing at level $\alpha$ based on the edges-within statistic $W_{1}$, when \(\alpha \ge k/n\).
\label{prop: star center}
\end{proposition}

Risk bounds for hypothesis testing based on $C$ are relatively easy to compute when $\graphset_0$ is the empty graph. Corollary~\ref{CorRisk} implies that such bounds hold for permutation testing when $\graphset_0$ is any vertex-transitive graph, from which we may derive the following result:

\begin{proposition}
Suppose $\graphset_0$ is a vertex-transitive graph. The risk of the center indicator test on the star graph on \(n\) vertices satisfies the following bounds:
\begin{equation*}
R_{k,0}(P_C, \beta) \ge \frac{k}{n} +  \exp\left(- \frac{k + \beta k(k - 1)/2}{n - k} \right),
\end{equation*}
and
\begin{equation*}
R_{k, 0}(P_{C}, \beta) \le \begin{cases}
\frac{k}{n} + \exp\left(-\frac{k + \beta k (k - 1)/2}{(n-k+1) + (k-1)\beta}\right), & \text{if } \beta \ge 1, \\
\frac{k}{n} + \exp\left(-\frac{k + \beta k(k-1)/2}{n}\right), & \text{if } \beta < 1.
\end{cases}
\end{equation*}
\label{prop: star risk}
\end{proposition}

Again, we can interpret the behavior of the risk bounds in terms of the fraction of infected vertices $k / n$. When \(\beta\) is fixed, the bound is $k / n + \exp\left(\Theta\left(-\beta k^2 / n\right)\right)$; thus, if we consider the size of the graph to be growing, we require  $k / n \to 0$ and $k^2 / n \to \infty$ in order to have vanishing risk.
The first condition suggests that \(k\) cannot be large enough to randomly infect the center of the star under the null.
The intuition for the latter condition is that under the alternative, each infected leaf attempts to infect the center of the star at successive time steps.
This leads to at most \(k(k - 1)/2\) infection attempts, and when the strength of these attempts is large enough, the center is infected with high probability.


\section{A closer look at test statistics}

\label{secCloserLook}
It is natural to wonder which of the invariant statistics leads to the best statistical test, or even whether it is reasonable to focus our attention on invariant statistics. We address the first question by showing a rough equivalence of likelihood ratio testing to tests based on the edges-within statistic. For the second question, we derive some results motivated by the Hunt-Stein theory of hypothesis testing.


\subsection{Likelihood ratio and edges-within}
\label{subsecLikelihoodRatio}
As noted earlier, the edges-within statistic appears in the probability density function of the \(01\)-Ising model: \(\energy(J) = -W(J)\). Thus, a test based on a likelihood calculation may be equivalently expressed in terms of the edges-within statistic.

Turning to the stochastic spreading model, we now show that $W_1$ arises as the first-order coefficient in the series expansion of the likelihood with respect to $\beta$.
This is somewhat reminiscent of the use of signed triangles in the random graph testing literature~\cite{banerjee2018, banerjee2017, bubeck2016}, which appears in the latter two cases as a first-order approximation to the asymptotic distribution of the log-likelihood.
Furthermore, we will see that when considering the likelihood ratio of a test with a specific type of composite null hypothesis against the simple alternative $\graphspace_1 = \{\graphset_1\}$, the edges-within statistic \(W_{1}\) also appears as the first-order coefficient of the likelihood ratio. These approximations are quite attractive, since as noted earlier, computing the likelihood ratio would require summing over \(k!\) different infection paths and is itself intractable.


Before stating the main result, we introduce some additional notation. Let $L(\graphset, \beta; J)$ denote the likelihood of infection $J$ under graph $\graphset$ and spreading parameter $\beta$. Let
\begin{equation*}
R(\graphset_{0}, \graphset_{1}, \beta; J) = \frac{L(\graphset_1, \beta; J)}{L(\graphset_0, \beta; J)}
\end{equation*}
denote the likelihood ratio, and let \(\graphset_{\emptytext}\) denote the empty graph. Recall that $N_t$ denotes the number of edges connecting infected vertices to uninfected vertices at time $t$.

Our main theorem shows the approximate equivalence between likelihood ratio tests and thresholding the edges-within statistic in the case of simple hypothesis testing when the null graph is empty.

\begin{theorem}
Consider the hypothesis test of \(\graphset_{\emptytext}\) versus \(\graphset_{1}\).
For an uncensored $P \in \pathset(J)$ in which \(k + c'\) vertices are infected, define the function
\[
Q(P)
= 
\sum_{t = 1}^{k + c'}
\frac{\neighbors_{t}}{n + 1 - t}.
\]
If \(c = 0\), we have 
\begin{align*}
& \begin{aligned}
R(\graphset_{\emptytext}, \graphset_{1}, \beta; J)
&= 
\left(1 + \beta W_{1}(J) + O(\beta^{2} W_{1}(J)^{2})\right)  
\left(
1  - \frac{1}{k!}\sum_{P \in \pathset(J)}  \beta Q(P) 
\right).
\end{aligned}
\end{align*}
If \(c > 0\), we have
\begin{align*}
& \begin{aligned}
R(\graphset_{\emptytext}, \graphset_{1}, \beta; J)
&=
\sum_{J' \in \uncensored(J)}
D(\beta, k, c, |J'|)
\left(1 + \beta W_{1}(J') + 
 O(\beta^{2} W_{1}(J')^{2})\right) \\ 
 & \qquad \times
\left(1 - \frac{1}{|J'|!} \sum_{P \in \pathset(J')}O(\beta Q(P))\right),
\end{aligned}
\end{align*}
where
\[
D(\beta, k, c, r)
:=
\binom{n}{k \; \; c} 
\left(
Z_{1}(\beta) \binom{n}{c} \binom{n}{r}
\right)^{-1}.
\]
\label{theoremLikelihoodRatio}
\end{theorem}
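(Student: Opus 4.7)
The plan is to expand the likelihood ratio term by term using the path decomposition $\prob(\infected = J) = \sum_{P \in \pathset(J)} \prob(\pathvar = P)$ and to Taylor expand each factor in $\eta$. First, for the uncensored empty-graph case, each conditional probability telescopes, giving $\prob_{\emptytext}(\pathvar = P) = (n-k)!/n!$ independently of $P$ and hence $\prob_{\emptytext}(\infected = J) = 1/\binom{n}{k}$. For $\graphset_1$, I would use the transition formula derived in Section~\ref{sec: setup},
\[
\prob_1(P_t \mid P_1, \ldots, P_{t-1}) = \frac{1 + \eta N_{t-1, P_t}}{(n-t+1) + \eta N_{t-1}},
\]
and divide through by the empty-graph factor $1/(n-t+1)$ to rewrite the $t^{\text{th}}$ step's ratio as $(1 + \eta N_{t-1, P_t})/(1 + \eta N_{t-1}/(n-t+1))$.

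The central combinatorial identity is $\sum_{t=1}^{k} N_{t-1, P_t} = W_1(J)$ for \emph{every} path $P \in \pathset(J)$: each edge $(u,v) \in \edgeset_1$ lying inside $J^1$ contributes exactly one to the sum at the step when the later of $u,v$ is infected. Expanding $\prod_t (1 + \eta N_{t-1, P_t})$ then produces the factor $1 + \eta W_1(J) + O(\eta^2 W_1(J)^2)$ \emph{uniformly} in $P$. Expanding $\prod_t (1 + \eta N_{t-1}/(n-t+1))^{-1}$ produces $1 - \eta Q(P) + O(\eta^2)$, where the sum naturally appears as $\sum_t N_{t-1}/(n-t+1)$, matching the statement after an index shift. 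Because the first factor is path-independent, it pulls out of the average over $\pathset(J)$, and the second factor produces the $\frac{1}{k!}\sum_P$ term in the theorem.

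For the censored case I would substitute
\[
\measure_1(J;\eta) = \sum_{J' \in \uncensored(J)} \frac{1}{\binom{n}{c}} \prob_1^u(\infected = J')
\]
into $\prob_1(\infected = J) = \measure_1(J;\eta)/Z_1(\eta)$ and apply the uncensored expansion to each $\prob_1^u(\infected = J')$, using $\prob_{\emptytext}^u(\infected = J') = 1/\binom{n}{|J'|}$. Taking the ratio against $\prob_\emptytext(\infected = J) = 1/\binom{n}{k\,c}$ and collecting the combinatorial prefactors $\binom{n}{k\,c}/\bigl(Z_1(\eta)\binom{n}{c}\binom{n}{|J'|}\bigr)$ then reproduces exactly the constant $D(\eta, k, c, |J'|)$ multiplying the uncensored expansion applied to $J'$.

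I expect the main obstacle to be carefully controlling the $O(\eta^2 W_1(J)^2)$ error in the numerator expansion: this requires grouping the $\binom{k}{2}$ cross terms in $\prod_t(1 + \eta N_{t-1, P_t})$ and showing they are dominated by $W_1(J)^2$ uniformly across paths, so that the bound survives the average over $\pathset(J)$. A secondary bookkeeping step is reconciling the natural sum $\sum_{t=1}^{k} N_{t-1}/(n-t+1)$ produced by the expansion with the stated form $\sum_{t=1}^{k+c'} N_t/(n+1-t)$ of $Q(P)$ via an index shift and the convention that $N_0 = 0$, and similarly tracking that the error terms in the censored statement inherit the $|J'|$-dependent factorial from the path average over $\pathset(J')$.
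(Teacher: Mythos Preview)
Your proposal is correct and follows essentially the same route as the paper: the paper also writes the ratio as $\frac{1}{k!}\sum_{P}A(P)B(P)$ with $A(P)=\prod_t(1+\eta N_{t,\invertex})$ and $B(P)=\prod_t \frac{n+1-t}{(n+1-t)+\eta N_t}$, uses the same key identity $\sum_t N_{t,\invertex}=W_1(J)$ to make $A(P)$ path-independent to first order, and handles the censored case by the same substitution into $\measure_1$. The only cosmetic difference is that the paper bounds $A(P)$ and $B(P)$ via $\log$/$\exp$ inequalities (e.g.\ $\log(1+x)\le x$ and an integral estimate for $B$) rather than by direct product expansion as you do; both yield the same $1+\eta W_1+O(\eta^2W_1^2)$ and $1-\eta Q(P)$ bounds, and the censored bookkeeping is identical.
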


Thus, Theorem~\ref{theoremLikelihoodRatio} shows that when \(c = 0\), the edges-within statistic \(W_{1}(\infected)\) is the leading term of the likelihood ratio expanded as a function of \(\beta\).
When \(c > 0\), the expression is somewhat more complicated, since the number of edges within the (uncensored) infection subgraph may differ depending on the statuses of censored vertices. The first-order term then equals the value of the edges-within statistic averaged over all possible infection vectors giving rise to the censored vector $J$.

\begin{remark}
Note that even if the likelihood ratio converges to its first-order approximation \(1 + \beta W_{1}\) for some sequence of parameters, this does not ensure that \(W_{1}\) will \emph{always} lead to a useful test.
For instance, in the case that \(\graphset_{1}\) is a star graph, the condition \(\beta W_{1} \to 0\) and the conditions for asymptotically vanishing risk are incompatible.
As discussed in Section~\ref{SecStarAlt}, the latter conditions require \(k/n \to 0\) and \(\beta k^{2} / n \to \infty\).
Since \(W_{1}\) may take on the value \(k - 1\), requiring that \(\beta W_{1}, k / n \to 0\) would imply that \(\beta k^{2} / n \to 0\), as well.
\end{remark}

Finally, to obtain a test between two non-empty graphs, we use the simple relation
\[
R(\graphset_{0}, \graphset_{1}, \beta; \infected)
=
\frac{R(\graphset_{\emptytext}, \graphset_{1}, \beta; \infected)}{R(\graphset_{\emptytext}, \graphset_{0}, \beta; \infected)}.
\]
This expression depends on both \(W_{0}\) and \(W_{1}\). However, cases exist where the expression only depends on $W_1$. For example, in the case of a composite null hypothesis involving testing a graph topology, we may derive the following theorem:

\begin{theorem}
If \(\graphspace_{0} = S_{n} \graphspace_{0}\),
then \(\sup_{\theta \in \Theta_{0}} L(\theta; J)\) is constant over \(J\) in $\infspace_{k,c}$.
Consequently, if we denote this constant by \(D'\),
we have
\[
R(\graphspace_{0}, \graphset_{1}, \beta; J)
=
\frac{1}{D'}
R(\graphset_{\emptytext}, \graphset_{1}, \beta; J).
\]
\label{theoremConstantLikelihood}
\end{theorem}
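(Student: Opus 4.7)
\textbf{Proof plan for Theorem~\ref{theoremConstantLikelihood}.}
The plan is to reduce the claim to a single equivariance property of the likelihood, namely that
\[
L(\graphset, \eta; J) = L(\pi \graphset, \eta; \pi J) \qquad \text{for all } \pi \in S_n, \; \graphset, \; \eta, \; J.
\]
This is the symmetry that makes the conclusion true; everything else is bookkeeping. First I would verify this identity by inspecting the two infection models defined in Section~\ref{sec: setup}. For the stochastic spreading model, the joint law of the exponential clocks $\{T_v\}$ and $\{T_{uv}\}$ on $(\pi\vertexset, \pi\edgeset)$ applied to the relabeled infection $\pi J$ is, by construction, the pushforward of the joint law on $(\vertexset, \edgeset)$ applied to $J$; the censoring step is also invariant since it picks a uniformly random subset of size $c$. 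For the conditional Ising model, the Hamiltonian satisfies $\energy_{\pi\graphset}(\pi J) = \energy_{\graphset}(J)$ because summing $J_u J_v$ over $(u,v) \in \edgeset$ is the same as summing $(\pi J)_{\pi(u)}(\pi J)_{\pi(v)}$ over $(\pi(u), \pi(v)) \in \pi\edgeset$, and the partition function in \eqref{EqnProbIsing} depends only on $|\infspace_{k,0}|$.

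Next I would use this equivariance together with $\graphspace_0 = S_n \graphspace_0$ to prove constancy of $\sup_{\theta \in \Theta_0} L(\theta; J)$. Given any two vectors $J_1, J_2 \in \infspace_{k,c}$, both have the same number of $1$'s, $0$'s and $\star$'s, so some $\sigma \in S_n$ satisfies $J_2 = \sigma J_1$. Then
\[
\sup_{(\graphset, \eta) \in \Theta_0} L(\graphset, \eta; J_2)
=
\sup_{(\graphset, \eta) \in \Theta_0} L(\sigma^{-1}\graphset, \eta; J_1)
=
\sup_{(\graphset', \eta) \in \Theta_0} L(\graphset', \eta; J_1),
\]
where the first equality uses the equivariance (with $\pi = \sigma^{-1}$) and the second reindexes $\graphset' = \sigma^{-1}\graphset$, which leaves $\graphspace_0$ unchanged precisely because $S_n \graphspace_0 = \graphspace_0$, and leaves $A_0$ unchanged trivially. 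Denote the common value by $D'$.

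Finally, for the ratio identity, I would apply the same equivariance to the empty graph, which is trivially fixed under every $\pi \in S_n$. This yields $L(\graphset_\emptytext, \eta; J) = L(\graphset_\emptytext, \eta; \pi J)$ for all $\pi$, so $L(\graphset_\emptytext, \eta; \cdot)$ is constant on $\infspace_{k,c}$; call that constant $c_0$. Then
\[
R(\graphspace_0, \graphset_1, \eta; J) = \frac{L(\graphset_1, \eta; J)}{D'}, \qquad R(\graphset_\emptytext, \graphset_1, \eta; J) = \frac{L(\graphset_1, \eta; J)}{c_0},
\]
so $R(\graphspace_0, \graphset_1, \eta; J) = (c_0/D') R(\graphset_\emptytext, \graphset_1, \eta; J)$; absorbing $c_0$ into $D'$ (which is how the statement is formulated) gives the claimed identity.

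The main obstacle is really just the first paragraph: writing out carefully that both spreading models are equivariant under joint vertex relabeling, and that the censoring mechanism preserves this symmetry. Once that property is in hand, the rest is essentially a change of variables in the supremum using $\graphspace_0 = S_n\graphspace_0$, plus the trivial observation that the empty-graph likelihood is uniform on $\infspace_{k,c}$.
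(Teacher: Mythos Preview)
Your proposal is correct and follows essentially the same route as the paper: both reduce the constancy of $\sup_{\theta\in\Theta_0} L(\theta; J)$ to the equivariance $L(\pi\graphset,\eta;\pi J)=L(\graphset,\eta;J)$ together with a change of variables in the supremum using $S_n\graphspace_0=\graphspace_0$. The paper's proof is terser (it simply writes $L(\theta;\pi J')=L(\pi^{-1}\theta;J')$ and invokes the preceding discussion for the ratio identity), whereas you spell out the verification of equivariance for both models and the censoring step, and you correctly flag that the constant in the ratio identity absorbs the uniform value $L(\graphset_\emptytext,\eta;\cdot)$.
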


In conjunction with Theorem~\ref{theoremLikelihoodRatio}, Theorem~\ref{theoremConstantLikelihood} shows that we may again extract $W_1$ as the leading term in the expansion for the likelihood ratio.
Another immediate corollary is that it is not possible to directly test between two unlabeled topologies, since the likelihoods for both hypotheses would be constant.


\subsection{Hunt-Stein theory for graph testing}

When conducting hypothesis tests, it is natural to consider maximin tests, i.e., tests that maximize the minimum power over the  space of alternative hypotheses.
A standard way to do this for invariant tests is via the Hunt-Stein theorem~\cite{lehmann2006}. 
The goal of this section is to demonstrate that a version of the Hunt-Stein theorem also holds for the graph testing problem, further motivating our use of hypothesis testing via $\Pi_1$-invariant test statistics.
The results in this subsection hold for general null and alternative parameter spaces \(\Theta_{0} = \graphspace_{0} \times \reals_{+}\) and \(\Theta_{1} = \graphspace_{1} \times \reals_{+}\). Let \(\Theta := \Theta_{0} \cup \Theta_{1}\).

Recall that a critical function \(\varphi\) outputs a value in \(\{0, 1\}\) for each observed infection vector $\infected$, corresponding to the selected hypothesis. A test based on $\varphi$ is $\Pi$-invariant if $\varphi(\pi \infected) = \varphi(\infected)$ for all $\pi \in \Pi$. Furthermore, the test is \emph{maximin} at level $\alpha$ if
\begin{equation}
\label{EqnAlpha}
\max_{\theta \in \Theta_0} \E_\theta [\varphi(\infected)] \le \alpha,
\end{equation}
and the value of
\begin{equation*}
\min_{\theta \in \Theta_1} \E_{\theta} [\varphi(\infected)]
\end{equation*}
is maximized among all tests $\varphi'$ satisfying inequality~\eqref{EqnAlpha}.

Analogous to canonical Hunt-Stein results, we will assume that both \(\graphspace_{0}\) and \(\graphspace_{1}\) are invariant under the same group of transformations, which in our setting is \(\Pi_{1}\). A natural case where this condition is satisfied is when \(\graphspace_{1}\) consists of a single graph \(\graphset_{1}\) and \(\graphspace_{0}\) consists of all permutations of a graph \(\graphset_{0}\). For instance, suppose \(\graphset_{1}\) is a cycle graph;
if we wish to test a null hypothesis involving a star graph $\graphset_0$, we can define $\graphspace_0$ to include all permutations of $\graphset_0$ under $\Pi_1$, as well, which yields \(n\) stars with different center vertices.



\begin{theorem}[Hunt-Stein for graph testing]
Let \(\Pi\) be a group of transformations on \(\infected\), and let \(\Theta_{0}\), and \(\Theta_{1}\) be such that
\(\Pi \Theta_{0} = \Theta_{0}\) and \(\Pi \Theta_{1} = \Theta_{1}\). 
If there exists a level-\(\alpha\) test \(\varphi^{*}\) maximizing \(\inf_{\theta \in \Theta} \expect_{\theta} [\varphi(\infected)]\), then there also exists a $\Pi$-invariant test with this property, defined by
\[
\psi^*(J)
=
\frac{1}{|\Pi|} \sum_{\pi \in \Pi} \varphi^*(\pi J).
\]
\label{theoremHuntStein}
\end{theorem}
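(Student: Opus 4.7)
The plan is to carry out the standard Hunt--Stein averaging argument, which is substantially simplified here because $\Pi \subseteq S_n$ is finite: there is no amenability issue, and uniform averaging over $\Pi$ directly produces the desired invariant test. The function
$\psi^*(J) = \frac{1}{|\Pi|} \sum_{\pi \in \Pi} \varphi^*(\pi J)$
is a convex combination of $\{0,1\}$-valued critical functions, and hence is a (possibly randomized) critical function taking values in $[0,1]$.

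The key technical ingredient is an equivariance identity for the infection distributions: for any $\pi \in \Pi$ and any $\theta = (\graphset, \eta) \in \Theta$,
$\expect_{(\graphset, \eta)}[f(\pi \infected)] = \expect_{(\pi \graphset, \eta)}[f(\infected)]$
for every measurable $f$. This follows directly from the symmetric construction of both infection models: the stochastic spreading model is built from i.i.d.\ exponential clocks attached to vertices and edges, and the conditional Ising model is defined through $\energy(J) = -\sum_{(u,v) \in \edgeset} J_u J_v$, which manifestly satisfies $\energy(\pi J; \pi \graphset) = \energy(J; \graphset)$. The uniform censoring mechanism is also obviously equivariant. Writing the right-hand side as $\expect_{\pi \theta}[f(\infected)]$ with $\pi \theta := (\pi \graphset, \eta)$, this identity converts averaging over $\Pi$ on the sample side into averaging over the orbit $\Pi \theta$ on the parameter side.

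With this in place, the proof has four short steps. For $\Pi$-invariance of $\psi^*$, if $\pi' \in \Pi$, then $\pi \mapsto \pi \pi'$ is a bijection of $\Pi$, so reindexing the sum immediately yields $\psi^*(\pi' J) = \psi^*(J)$. For the level constraint, for any $\theta_0 \in \Theta_0$ the equivariance gives
$\expect_{\theta_0}[\psi^*(\infected)] = \frac{1}{|\Pi|}\sum_{\pi \in \Pi} \expect_{\pi \theta_0}[\varphi^*(\infected)] \le \alpha$,
because $\Pi \Theta_0 = \Theta_0$ forces each $\pi \theta_0 \in \Theta_0$, where $\varphi^*$ satisfies~\eqref{EqnAlpha}. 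For the power, the same equivariance applied to any $\theta_1 \in \Theta_1$ yields $\expect_{\theta_1}[\psi^*(\infected)] \geq \inf_{\theta \in \Theta_1} \expect_{\theta}[\varphi^*(\infected)]$ since $\Pi \Theta_1 = \Theta_1$. Taking the infimum over $\theta_1$ shows that the maximin value of $\psi^*$ is at least that of $\varphi^*$, and combined with the maximality of $\varphi^*$ among level-$\alpha$ tests, we obtain equality.

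The main obstacle is really only the equivariance identity above; everything else is routine group bookkeeping. Verifying it requires checking that the action of $\pi$ on infection paths, infection times, and (in the censored case) the set of censored vertices all commute appropriately with vertex relabeling. Once this has been recorded --- which amounts to a minor extension of the calculations verifying~\eqref{eqnPermCond} in the Appendix --- the Hunt--Stein argument goes through unchanged, even in the presence of censoring.
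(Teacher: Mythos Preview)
Your proposal is correct and follows essentially the same approach as the paper: average $\varphi^*$ over $\Pi$, verify $\Pi$-invariance by reindexing, and use the equivariance identity $\E_\theta[\varphi^*(\pi\infected)] = \E_{\pi\theta}[\varphi^*(\infected)]$ together with $\Pi\Theta_i = \Theta_i$ to transfer the level and power bounds from $\varphi^*$ to $\psi^*$. You are slightly more explicit than the paper in justifying the equivariance identity and in noting that $\psi^*$ is in general $[0,1]$-valued (i.e., possibly randomized), both of which are sound observations.
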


Thus, for composite tests in which \(\Theta_{0}\) and \(\Theta_{1}\) are both \(\Pi_{1}\)-invariant, 
whenever we can find a maximin critical function \(\varphi\), we can also find a maximin \(\Pi_{1}\)-invariant critical function.
Note, however, that one could likely do better than a \(\Pi_{1}\)-invariant test when \(\graphspace_{0}\) is not \(\Pi_{1}\)-invariant, such as in the case of a star with a fixed center.


\section{General testing procedures}
\label{sec:general_graphs}

Our permutation testing procedure exploits graph symmetries to great effect; however, the strong symmetry assumptions may sometimes be prohibitive.
One particular case would be testing a graph \(\graphset\) against a ``close'' variant \(\graphset'\), such as a subgraph of \(\graphset\) with a few edges removed.

Consider testing the hypotheses
\begin{align*}
&\begin{aligned}
H_{0}
&: \infected \sim \ssm(\graphset_{0}, \beta_{0}) \text{ for } \beta_{0} \in \paramset_{0}, \\
H_{1}
&: \infected \sim \ssm(\graphset_{1}, \beta_{1}) \text{ for } \beta_{1} \in \paramset_{1}.
\end{aligned}
\end{align*}
We now propose and analyze a general testing procedure. The idea is straightforward: For a finite set $\completespace_D$, define a discretization function 
\(F_{D}: [0, \infty) \to \completespace_{D}\) such that
\begin{equation}
\tvdist\left(\prob _{0, \beta}, \prob _{0, F_{D}(\beta)}\right)
\leq 
\delta,
\label{eqnDiscretizationFunction}
\end{equation}
and \(\paramset_{0, D}\) = \(F_{D}(\paramset_{0})\).
We then simulate a sufficient number of infection processes with the parameters \((\graphset_{0}, \beta_{0})\) for \(\beta_{0}\) in \(\paramset_{0, D}\) to approximate the distribution for all parameter values in the null hypothesis, which allows us to perform tests or, alternatively, form confidence sets for the underlying graph structure.
Although the idea is fairly simple, the appropriate discretization is somewhat lengthy to state; therefore, we defer further details on the discretization to Appendix~\ref{app:general_graphs}.
In the remainder of the section, we describe basic hypothesis tests and confidence interval algorithms.

\subsection{Hypothesis testing algorithm}
\label{subsecHypothesisTestingAlgorithm}

With a discretization satisfying equation~\eqref{eqnDiscretizationFunction} in hand, it is straightforward to devise a simulation-based algorithm that controls the Type I error.
This is summarized in Algorithm~\ref{algorithmPCDiscretizationTest}.
Further details on algorithm parameters may be found in Appendix~\ref{app:general_graphs}, but briefly, we note that decreasing \(\epsilon\), \(\delta\), or \(\xi\) increases the size of the discretization and therefore the computational burden, while increasing the statistical power.

\begin{algorithm}[!h]
\SetKwInOut{Input}{Input}
\Input{Type I error tolerance $\alpha > 0$, approximation parameters \(\epsilon\), \(\delta\), and \(\xi\), observed infection vector $\infected$, null graph \(\graphset_{0}\), statistic \(S\), discretization \(\paramset_{0, D}\) of size \(N\), discretization function \(F_{D}\)}

Define 
\(
N_{\sims} 
\geq \left(\frac{1}{2\epsilon^{2}} + \frac{8}{3\epsilon} \right) \log
\frac{1}{\xi}
\)

For each \(\beta\) in \(\paramset_{0, D}\), simulate an infection \(N_{\sims}\) times on \(\graphset_{0}\) to obtain the approximate \(1 - (\alpha - \gamma - \epsilon)\) quantile \(\hat{t}_{\alpha - \delta - \xi - \epsilon, \beta}\)

Compute \(\hat{t}_{\alpha - \delta - \xi - \epsilon} = \max_{\beta \in \paramset_{0, D}} \hat{t}_{\alpha - \gamma - \epsilon, \beta}\)

Reject the null hypothesis if \(S(\infected) > \hat{t}_{\alpha - \delta - \xi - \epsilon}\)
\caption{One-Statistic Test}
\label{algorithmPCDiscretizationTest}
\end{algorithm}

\begin{theorem}
Let \(\graphset_{0}\) be a graph and \(S\) be a statistic.
Let \(\alpha > 0\) be given, and set \(\epsilon < \alpha - \delta - \xi\).
Then Algorithm~\ref{algorithmPCDiscretizationTest} with a discretization function satisfying equation~\eqref{eqnDiscretizationFunction} controls the Type I error at level \(\alpha\).
\label{theoremPCDiscretizationAlgorithm}
\end{theorem}

We prove this result in Appendix~\ref{app:general_graphs:proofs}.

\begin{remark}
While Theorem~\ref{theoremPCDiscretizationAlgorithm} proves the correctness of simulation tests, we still need to address further statistical and computational details.
Statistically, we are interested in tests with good power.
Since the test statistic \(S\) may be arbitrary, we cannot make statements about the power of the procedure in general; for example, a constant statistic \(S\) would have a Type I error of \(0\) and a Type II error of \(1\).
Nonetheless, in addition to testing a broader class of graphs, we may also use many more statistics than the edges-within statistic.
However, this does not include the likelihood ratio, even though the likelihood ratio suffers from the further problem of being generally uncomputable, because the likelihood statistic is usually a function of the parameters \(\beta_{0}\) and \(\beta_{1}\), as well.
\end{remark}

We also make one remark regarding the relation between Algorithm~\ref{AlgPermApprox} and Algorithm~\ref{algorithmPCDiscretizationTest}.
The question of how large to make \(N_{\sims}\) is more important in the case of general graphs because the size of the discretization \(N\) is usually quite large, making increasing \(N_{\sims}\) costly.
However, if we wished to be more precise for Algorithm~\ref{AlgPermApprox}, note that Theorem~\ref{theoremPCDiscretizationAlgorithm} provides the necessary error framework.
To account for error properly in Algorithm~\ref{AlgPermApprox}, we would choose the parameters of Algorithm~\ref{algorithmPCDiscretizationTest} with \(\delta = 0\), \(\paramset_{0, D} = \{0\}\), and, consequently, \(N = 1\).

\subsection{Confidence set algorithm}
\label{subsectionConfidenceSet}

A convenient consequence of the general discretization procedure is that we can invert the tests to obtain confidence sets for both the graph and the parameter \(\beta\).
To this end, we provide a slightly modified algorithm to compute confidence sets, matching the test that we have given.
In analog with Algorithm~\ref{algorithmPCDiscretizationTest}, we provide the following confidence set algorithm in Algorithm~\ref{algorithmOneStatisticConfidenceSet}.

\begin{algorithm}[!h]
\SetKwInOut{Input}{Input}
\Input{Type I error tolerance $\alpha > 0$, approximation parameters \(\epsilon\), \(\delta\), and \(\xi\), observed infection vector $\infected$, set of graphs \(\graphspace\), statistic \(S\), discretization \(\paramset_{\graphset, D}\) of size \(N_{\graphset}\) for each \(\graphset\) in \(\graphspace\), discretization functions \(F_{\graphset, D}\)}

Define 
\(
N_{\sims} 
\geq \left(\frac{1}{2\epsilon^{2}} + \frac{8}{3\epsilon} \right) \log
\frac{1}{\xi}
\)

For each \(\beta\) in \(\paramset_{\graphset, D}\), simulate an infection \(N_{\sims}\) times on \(\graphset\) to obtain the approximate \(1 - (\alpha - \delta - \xi - \epsilon)\) quantile \(\hat{t}_{\alpha - \delta - \xi - \epsilon, \beta}\)

Define the discrete confidence sets
\[
C_{\graphset, D}
:=
\left\{\beta \in \paramset_{D}:
S(\infected) \leq \hat{t}_{\alpha - \delta - \xi - \epsilon, \beta}
\right\}
\]

Return the confidence set \(C = \bigcup_{\graphset \in \graphspace} 
\left(\graphset, F_{\graphset, D}^{-1}(C_{\graphset, D})\right)\)
\caption{One-Statistic Confidence Set}
\label{algorithmOneStatisticConfidenceSet}
\end{algorithm}
\begin{corollary}
If for each \(\graphset\) in \(\graphspace\) the discretization \(\paramset_{\graphset, D}\) and discretization function \(F_{\graphset, D}\) satisfy equation~\eqref{eqnDiscretizationFunction} and the approximation parameters are chosen as in Theorem~\ref{theoremPCDiscretizationAlgorithm}, the set \(C\) returned by Algorithm~\ref{algorithmOneStatisticConfidenceSet} is a \((1 - \alpha)\)-confidence set.
\label{corOneStatisticConfidenceSet}
\end{corollary}


\section{Application to an HIV graph}
\label{secHIV}

\begin{figure}[!h]
\centering 
\includegraphics[width=3in]{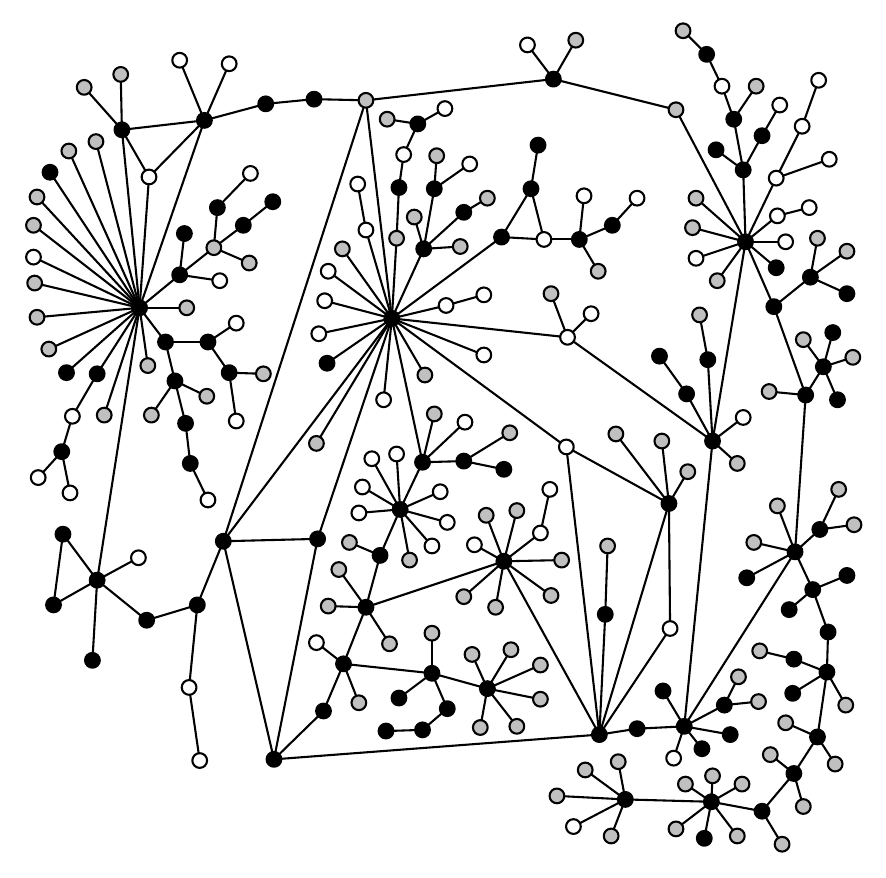}
\caption{The HIV infection graph. Black vertices are infected, white vertices are uninfected, and gray vertices are censored. Edges represent sexual or injecting drug use relationships.
Reproduced from {\it Sexually Transmitted Infections}, Potterat et al., 78, i159--i163, 2002, with permission from BMJ Publishing Group, Ltd.}
\label{fig: hiv graph}
\end{figure}

Finally, we applied our permutation testing procedures to assess the explanatory power of an HIV network. We also examined two close variants of the network with our general graph methods. The network is constructed from the population of Colorado Springs from 1982--1989, and the observed infection statuses of individuals, obtained from the CDC and the HIV Counseling and Testing Center, is reported in \cite{potterat2002}. For a visualization of the network, see Figure~\ref{fig: hiv graph} above. The network was constructed from contact tracing for sexual and injecting drug partners; beginning in 1985, officials interviewed people testing positive for HIV for information to identify relevant partners.

\subsection{Permutation results}

We first applied  Algorithm~\ref{AlgPermApprox} with the edges-within statistic \(W\), resulting in \(W(\infected) = 98\). Based on $N_{\sims} = 1000$ randomly drawn permutations, we obtained an approximate $p$-value of 0; i.e., none of the simulations yielded a value of \(W\) of at least \(98\). This should provide substantial evidence of the explanatory power of the graph. However, a closer examination of the network reveals that the censored nodes might not be chosen at random. To further validate this observation, we simulated our stochastic spreading model on the HIV graph, and computed \(W\) for $\lambda = 1$ and various values of \(\beta\) in \([10, 1000]\). A boxplot is provided in Figure~\ref{fig: hiv boxplots}. In particular, none of the simulations produced a value of \(W\) greater than or equal to \(98\), and increasing \(\beta\) even further seems to have little effect.
\begin{figure}[!h]
\centering 
\minipage{0.48\textwidth}
\includegraphics[width=\linewidth]{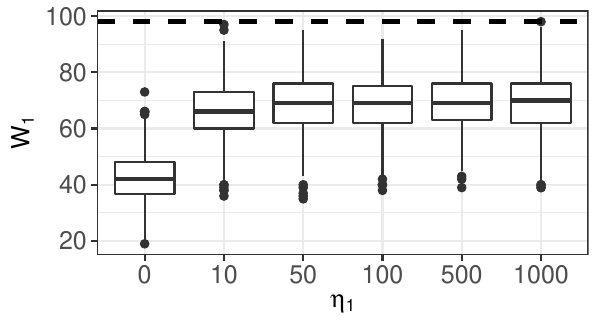}
\endminipage 
\minipage{0.48\textwidth}
\includegraphics[width=\linewidth]{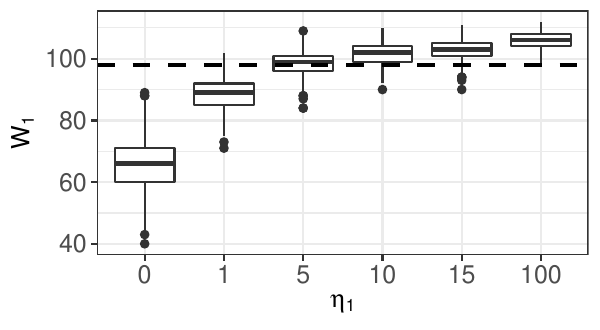}
\endminipage\hfill
\caption{The distributions of \(W\) for different values of \(\beta\) on the HIV graph, based on 1000 simulations. The left plot corresponds to random censoring, whereas the right plot conditions on the censored nodes. The dashed line shows \(W_{1} = 98\), the value from the data. No \(\beta\) appears to be consistent with the observed value \(W = 98\) in the randomized censoring model; such a value is more typical in the conditional censoring model.}
\label{fig: hiv boxplots}
\end{figure}

Subsequently, we used Algorithm~\ref{AlgCondApprox} of the Appendix with statistic \(W\), with the empty graph as the null. The approximate $p$-value computed from 1000 simulations was again 0.
However, simulating the stochastic spreading model on the HIV graph, conditioned on the censored nodes, leads to more reasonable values of $W$. Boxplots for simulations corresponding to $\lambda = 1$ and $\beta \in \{0, 1, 5, 10, 15, 100\}$ are shown in the right-hand plot of Figure~\ref{fig: hiv boxplots}. Note that for moderate values of \(\beta\), the observed value 98 lies squarely within the range of the empirical distribution of $W$.
This suggests that the model which conditions on the censored nodes provides a better fit to the data.

Finally, note that neither of the algorithms from \cite{milling2015} produce meaningful conclusions.
In the case of the TB algorithm, all positive integers \(d\) lead to a threshold larger than the diameter of the graph, so the algorithm will always reject the null hypothesis. Similarly, the threshold for the TT algorithm is \(779\), which exceeds the weight of any spanning tree since the graph has only 250 vertices. The TB and TT algorithms both fail due to the closely-connected structure of HIV graph, which is a characteristic of many real-world networks, and the inability to select a good threshold.

\subsection{General graph results}

Here, we consider confidence sets for the pairs \((\graphset, \beta)\).
Since our procedure relies on simulating a null distribution for a given pair of parameters, we cannot hope to construct the full confidence set, since enumerating all graphs for even small \(n\) is difficult.
However, we can use our procedure to test whether a particular graph lies in a given \((1 - \alpha)\)-confidence set; we do this for a pair of graphs to demonstrate the applicability of the method.

For the confidence set algorithm, we use Algorithm~\ref{algorithmTwoStatisticConfidenceSet}. 
We  set \(\alpha = 0.2\), with \(\alpha_{0} = \alpha_{1} = \alpha / 2\).
We  set the parameters \(\epsilon = 0.029\), \(\xi = 0.001\), and \(\delta = 0.02\).
For a discussion on choosing parameter values, see Appendix~\ref{app:general_graphs}.
Our two statistics are \(W\) and \(-W\), i.e., we wish to have two-sided confidence sets for the statistic \(W\).
Finally, we restrict to the set \(\paramset = [0, 10]\) in computing the confidence set, since this leads to over \(15,000\) values of \(\beta\) for each of the two graphs we consider.

First, we consider a graph that we call ``graph 25'' or \(\graphset_{25}\), since it is the HIV graph with 25 randomly-chosen edges removed.
A picture is available in the left part of Figure~\ref{fig:G25Double}.

\begin{figure}[!h]
\centering 
\minipage{0.48\textwidth}
\includegraphics[width=\linewidth]{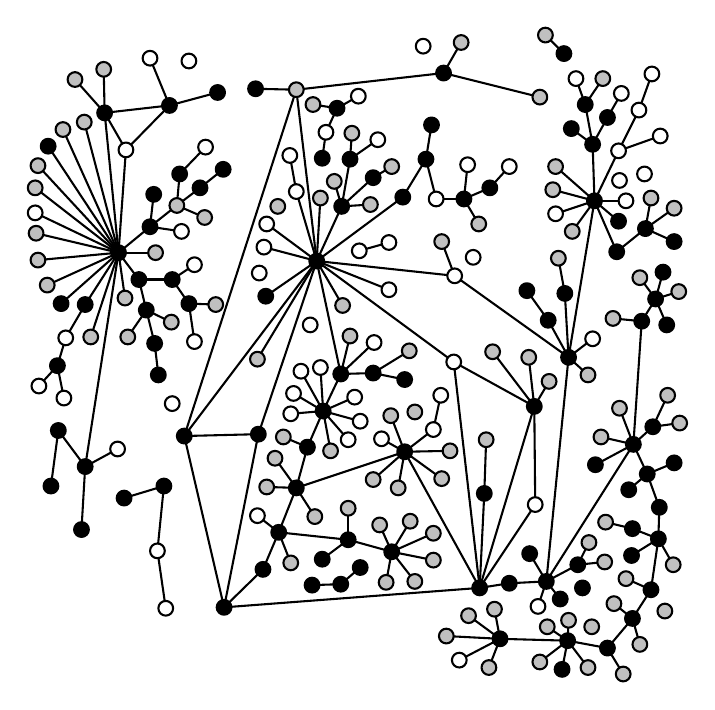}
\endminipage 
\minipage{0.48\textwidth}
\includegraphics[width=\linewidth]{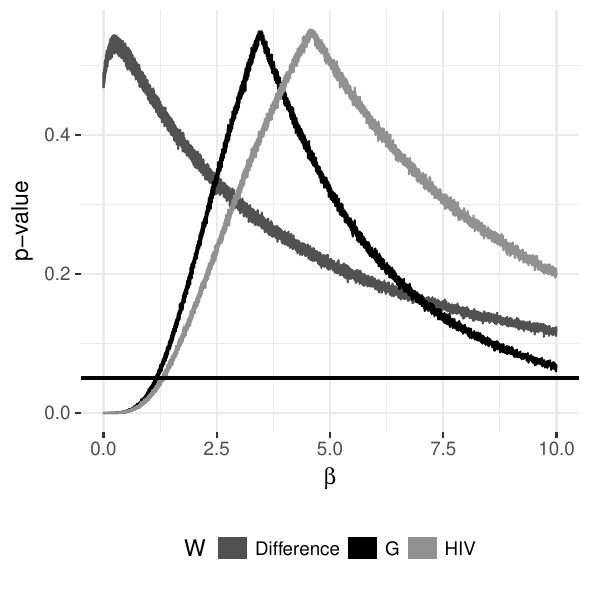}
\endminipage\hfill
\caption{The graph \(\graphset_{25}\) used in computing the joint confidence set of \((\graphset_{25}, \beta)\) and the \(p\)-values for various statistics. 
The graph was formed from the HIV graph by removing 25 edges uniformly at random.
Black vertices are infected, white vertices are uninfected, and gray vertices are censored.
On the right, we have the \(p\)-values as a function of \(\beta\) for the HIV graph using the two-statistic test with different test statistics. 
The statistics are \(W_{\graphset_{25}}\), \(W_{\graphset_{\hiv}}\), and \(W_{\graphset_{25}} - W_{\graphset_{\hiv}}\), and the \(p\)-values for these are given by the black line, the light gray line, and the dark graph line respectively.
Values of \(\beta\) for which a curve is above the horizontal line at \(0.05\) are in the confidence set generated by the corresponding statistic.}
\label{fig:G25Double}
\end{figure}

For illustration, we consider two-sided tests for \(W_{\graphset_{25}}\), \(W_{\graphset_{\hiv}}\), and \(W_{\graphset_{25}} - W_{\graphset_{\hiv}}\), i.e., the edges-within statistic computed with respect to \(\graphset_{25}\), the edges-within statistic computed with respect to the HIV graph, and the difference of these statistics.
The precise algorithm is Algorithm~\ref{algorithmTwoStatisticConfidenceSet}, given in the Appendix.
The results are shown on the right in Figure~\ref{fig:G25Double}.
This figure is to be interpreted as follows: For each value of \(\beta\) in \(\paramset_{D}\), we perform many simulations to estimate the upper and lower percentiles \(\hat{p}_{\graphset, S}(\beta, \infected)\) and \(\hat{p}'_{\graphset, S}(\beta, \infected)\) of the statistic \(S\), where 
\begin{align*}
& \begin{aligned}
\hat{p}_{\graphset, S}(\beta, \infected)
&=
\frac{1}{N_{\sims}} \sum_{i = 1}^{N_{\sims}}
\ind\left\{
S(I_{i}) \leq S(\infected)
\right\} \\
\hat{p}'_{\graphset, S}(\beta, \infected)
&=
\frac{1}{N_{\sims}} \sum_{i = 1}^{N_{\sims}}
\ind\left\{
S(I_{i}) \geq S(\infected)
\right\}
   \end{aligned}
\end{align*}
and \(I_{i}\) are the simulated infections with parameter \(\beta\).
Then, we plotted the minimum percentile
\(q_{S}(\beta) = \min\left\{\hat{p}_{\graphset, S}(\beta, \infected), \hat{p}'_{\graphset, S}(\beta, \infected)\right\}\) against \(\beta\) for three different statistics.
Therefore, the approximate confidence set for \(\beta\) given by \(S\) consists of the \(\beta\) for which the \(q_{S}(\beta)\) is above the horizontal line at \(0.05\).
The precise confidence interval would be obtained by mapping back via \(F_{D}^{-1}\), but rounding after two or three decimal places is sufficient here.

Note that the statistics \(W_{\graphset_{25}}\) and \(W_{\graphset_{\hiv}}\) lead to confidence sets for \(\beta\) of approximately \([1.14, 10]\) and \([1.27, 10]\) respectively.
On the other hand, the difference \(W_{\graphset_{25}} - W_{\graphset_{\hiv}}\) leads to the confidence interval \([0, 10]\). 
This is unsurprising, since the \(\graphset_{25}\) and \(\graphset_{\hiv}\) are not too different. We now repeat this procedure on a different graph, \(\graphset_{100}\), which has \(100\) edges removed from the HIV graph uniformly at random.
Note that \(\graphset_{100}\) is a subgraph of \(\graphset_{25}\). 
An image of the graph and the results of the analysis are shown in Figure~\ref{fig:G100Double}. The major difference for \(\graphset_{100}\) is that no value of \(\beta\) is included in the confidence set for the difference statistic \(W_{\graphset_{100}} - W_{\graphset_{\hiv}}\).
This is likely due to the fact that many edges exist in \(\graphset_{\hiv}\) between infected vertices that do not exist when simulating the distribution using \(\graphset_{100}\).
In short, for the values of \(\beta\) from \([0, 10]\), we have found a non-trivial graph that is not in the confidence set of graphs given by the observed infection.

\begin{figure}[!h]
\centering 
\minipage{0.48\textwidth}
\includegraphics[width=\linewidth]{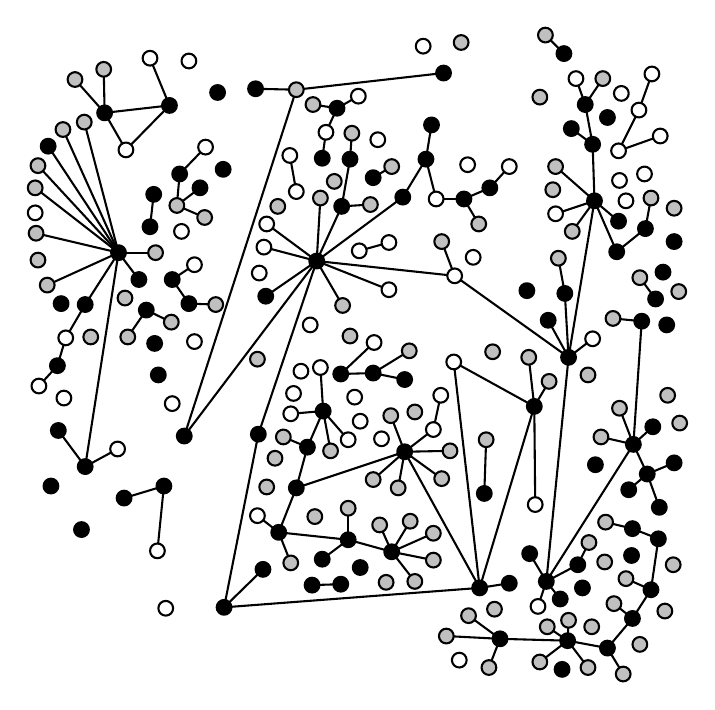}
\endminipage 
\minipage{0.48\textwidth}
\includegraphics[width=\linewidth]{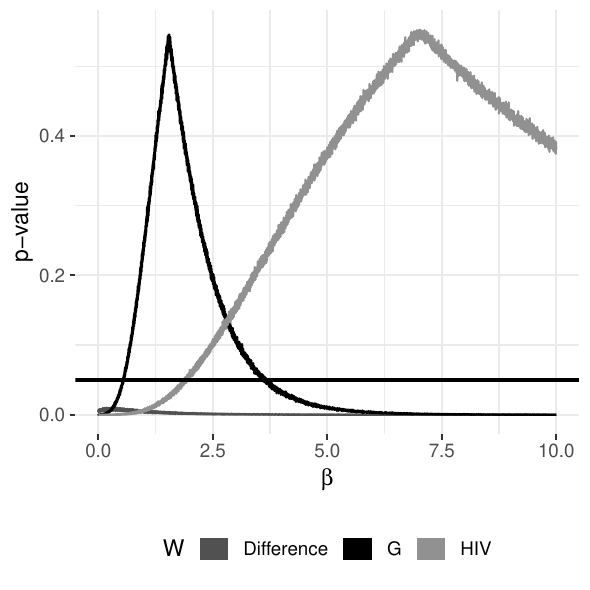}
\endminipage\hfill
\caption{The graph \(\graphset_{100}\) used in computing the joint confidence set of \((\graphset_{100}, \beta)\) and the \(p\)-values for various statistics. 
The graph was formed from the HIV graph by removing 100 edges uniformly at random.
Black vertices are infected, white vertices are uninfected, and gray vertices are censored.
On the right, we have the \(p\)-values as a function of \(\beta\) for the HIV graph using the two-statistic test with different test statistics. 
The statistics are \(W_{\graphset_{100}}\), \(W_{\graphset_{\hiv}}\), and \(W_{\graphset_{25}} - W_{\graphset_{\hiv}}\), and the \(p\)-values for these are given by the black line, the light gray line, and the dark graph line respectively.
Values of \(\beta\) for which a curve is above the horizontal line at \(0.05\) are in the confidence set generated by the corresponding statistic.}
\label{fig:G100Double}
\end{figure}

\section{Discussion}
\label{sec: discuss}

The results presented in our paper suggest several avenues for further research. 
Although we only considered two infection models and closely examined the stochastic spreading model, the permutation testing framework could be extended to other infection models, as well. As an extension of Ising models, one could consider other classes of Markov random fields. Toward network modeling, such tests would also be helpful for testing network structure, such as Erd\H{o}s-Renyi networks versus networks with block structure.
We have left open the question of whether a version of Theorem~\ref{ThmPermStat} holds in more generality. It would also be useful to devise a more interpretable or verifiable sufficient condition for the validity of the permutation test.
Additionally, while our general graph methods can in principle be applied to any graph, it would be computationally prohibitive to do so for larger graphs.
Thus, an open question is how to devise more efficient algorithms for testing general graph structures.


\section*{Acknowledgments}

The authors thank Varun Jog, Dylan Small, and Miklos Racz for helpful comments and discussions. The authors also thank the AE and anonymous reviewers for their many insightful suggestions.


\bibliography{networks}
\newpage
\appendix

\section{Organization}

In this supplement, we provide additional theory, simulation results, and proofs omitted from the main paper. Section~\ref{appFurtherTheory} provides extensions to the theory, including 
(i) a partial converse to the main result of the paper; (ii) an extension to composite null hypotheses, (iii) an extension to composite alternative hypotheses, (iv) an extension to the case of non-uniform censoring; (v) a discussion of computing automorphism groups; (vi) theory for multiple infection processes; (vii) a method for dealing with slight departures from the automorphism condition; and (viii) an extension to randomly-generated graphs. This final part shows that our permutation test is valid for the random graph settings considered in \cite{milling2015}.

In Section~\ref{app:general_graphs}, we discuss the case of discretization-based hypothesis testing for general graphs in more detail. Specifically, we state the discretization of \(\beta\) values necessary to obtain valid tests and confidence intervals. Additionally, we provide some additional analysis for the HIV graph example.

More extensive simulation results are contained in Section~\ref{appSims}.
Simulations were conducted on various graphs to illustrate different phenomena.
The first set of simulations concerns testing vertex-transitive graphs against the star graph to validate our theory. The second set of simulations corresponds to irregular graphs obtained from online social networks, where we use the empty graph as the null graph. The third set of simulations covers two cases of randomly-generated graphs. In the first case, our theory predicts that  permutation test succeeds on average, whereas the second case corresponds to a departure from the conditions required for our theory to succeed, and shows that the permutation test might fail to control the Type I error as edge correlations increase.

Finally, we provide detailed proofs of all theoretical results stated in the paper.
In Section~\ref{appProofs}, we provide proofs of our main theorems and propositions.
In Section~\ref{AppCors}, we prove the corollaries.
We provide proofs of technical lemmas in Section~\ref{AppLemmas}, and we prove results for discretization procedures in Section~\ref{app:general_graphs:proofs}.
We conclude with standard supporting lemmas in Section~\ref{AppAux}.

\section{Further permutation results}
\label{appFurtherTheory}

This section contains additional theoretical results extending the main results of our paper. Proofs are contained in subsequent sections.

\subsection{A partial converse}

A natural question is whether the condition $\Pi_{10} = S_n$ is unnecessarily strong for guaranteeing the theory of our proposed permutation test. We now state and prove a partial converse to Theorem~\ref{ThmPermStat}. 

In particular, it is possible to show (cf.\ Corollary~\ref{cor: vertex transitive} below) that when $\graphset_0$ is a star graph, meaning a graph with $n-1$ edges connecting all nodes to a center node, the condition $\Pi_{10} = S_n$ is equivalent to the graph $\graphset_1$ being vertex-transitive. We have the following result:

\begin{theorem}
Let \(\graphset_{0}\) be a star graph and \(\graphset_{1}\) be a graph that is non-vertex-transitive.
Let \(\pi \sim \text{Uniform}(S_{n})\).
Then there exists a \(\Pi_{1}\)-invariant statistic \(S\) such that \(S(\infected)\) and \(S(\pi \infected)\) do not have the same distribution under $H_0$.
\label{ThmConverse}
\end{theorem}

In other words, if $\Pi_{10} \neq S_n$, it is possible to find a $\Pi_1$-invariant statistic that does \emph{not} satisfy the sufficient condition that we use to derive the validity of the permutation test. We conjecture that a similar converse holds more broadly for general graphs $\graphset_0$; i.e., if $\Pi_1 \Pi_0 \neq S_n$, a $\Pi_1$-invariant statistic $S$ always exists such that $S(\infected) \stackrel{d}{\neq} S(\pi \infected)$ under $H_0$.



\subsection{Composite null hypotheses}

Our first result concerns hypothesis tests involving a composite null hypothesis. In particular, note that the argument in Theorem~\ref{ThmPermTest} applies equally well to a composite null hypothesis, since we only require $\Pi_1 \Pi_0 = S_n$ for all graphs $\graphset_0$ appearing in the null hypothesis. Hence, the permutation test described in Algorithm~\ref{AlgPermExact} controls the Type I error at level $\alpha$ if all graphs $\graphset_0$ in the composite null hypothesis satisfy \(\Pi_{1} \Pi_{0} = S_{n}\).

The following result is an easy variant of Corollary~\ref{CorRisk}, which we state without proof:

\begin{corollary}
Suppose that $S$ is $\Pi_1$-invariant statistic and $\mathcal{C}$ is a collection of graphs such that $\Pi_1 \Aut(\graphset_0) = S_n$ for all $\graphset_0 \in \mathcal{C}$.
The risk of any test based on $S$ is equal to the risk of the same test computed with respect to a single null hypothesis $H_0'$ involving the empty graph.
\label{CorComposite}
\end{corollary}

Such a result may be desirable in cases where there is uncertainty about the exact topology for a particular form of transmission, such as a water-borne illness, and we are testing against a very different transmission mechanism, such as a blood-borne disease.
Testing star graph topologies could realistically arise in practice, for instance to network scientists who study networks from a game-theoretic or economic point of view, where star graphs naturally arise. On the other hand,  the condition $\Pi_1 \Pi_0 = S_n$ could be quite restrictive in other settings.

\subsection{Composite alternative hypotheses}
Suppose one wishes to test the null hypothesis \(H_{0}\): the infection spread on \(\graphset_{0}\), versus a composite alternative \(H_{1}\): the infection spread on some
\(\graphset\) in \(\graphs_{1}\).
For simplicity, we consider the two-graph alternative 
\(\mathfrak{G}_{1} = \{\graphset_{1}, \graphset_{2}\}\)
and refer to the graph automorphism groups as \(\Pi_{1}\) and \(\Pi_{2}\).
If \(\graphset_{1}\) and \(\graphset_{2}\) are not equivalent, 
i.e., there is no \(\pi_{1}\) in \(\Pi_{1}\) such that 
\(\graphset_{2} = \pi_{1} \graphset_{1}\),
we consider
the two-dimensional statistic \((W_{1}, W_{2})\), which is \((\Pi_{1}, \Pi_{2})\)-invariant.
If \(\Pi_{1} \Pi_{0} = \Pi_{2} \Pi_{0} = S_{n}\), both \(W_{1}\) and \(W_{2}\) have the same distribution under the null as if the true graph were empty; however, the two statistics could be correlated.

An exact permutation test is proposed in Algorithm~\ref{algComposite}, with theoretical guarantee in Theorem~\ref{theoremCompositeAlpha}. The idea is to split the Type I error tolerance over the rejection regions of both edges-within statistics.

\begin{algorithm}[!h]
\SetKwInOut{Input}{Input}
\Input{Type I error tolerance $\alpha > 0$, observed infection vector $\infected$}
For each $\pi \in S_n$, compute the statistic $(S_{1}(\pi \infected), S_{2}(\pi \infected))$

For \(i = 1, 2\),  
determine a threshold $t_{\alpha, i}$ such that for \(j < i\),
\begin{equation*}
t_{\alpha, i}
=
\sup\left\{t \in \support(S_{i}):
\frac{1}{n!} \sum_{\pi \in S_n} \ind\{S_{i}(\pi \infected) \geq t_{\alpha, i} \text{ and } S_{j}(\pi \infected) \leq t_{\alpha, j} \} > \frac{\alpha}{2}
\right\}
\end{equation*}

Reject $H_0$ if and only if $S_{i}(\infected) > t_{\alpha, i}$ for either \(i = 1\) or \(i = 2\)
\caption{Permutation test for composite alternatives (exact)}
\label{algComposite}
\end{algorithm}


\begin{theorem}
Let \(\pi \sim \text{Uniform}(S_{n})\).
Consider a hypothesis test of \(\graphset_{0}\) versus the composite alternative \(\graphspace_{1} = \{\graphset_{1}, \graphset_{2}\}\).
Let \(\Pi_{0}\), \(\Pi_{1}\), and \(\Pi_{2}\) be the permutation groups of \(\graphset_{0}\), \(\graphset_{1}\), and \(\graphset_{2}\) respectively,
Let \(S_{1}\) and \(S_{2}\) be \(\Pi_{1}\)-invariant and \(\Pi_{2}\)-invariant statistics.
If we have \(\Pi_{1} \Pi_{0} = \Pi_{2} \Pi_{0} = S_{n}\),
then \(S_{i}(\infected)\) and \(S_{i}(\pi\infected)\) have the same distribution under the null hypothesis for \(i = 1, 2\).
In particular, Algorithm~\ref{algComposite} controls the Type I error at level \(\alpha\). 
\label{theoremCompositeAlpha}
\end{theorem}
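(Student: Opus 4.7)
The first conclusion---that \(S_i(\infected) \stackrel{d}{=} S_i(\pi\infected)\) under \(H_0\) for \(i = 1, 2\)---follows by applying Theorem~\ref{ThmPermStat} twice: once with \((S, \Pi) = (S_1, \Pi_1)\), using the hypothesis \(\Pi_1 \Pi_0 = S_n\), and once with \((S, \Pi) = (S_2, \Pi_2)\), using \(\Pi_2 \Pi_0 = S_n\). The null invariance condition~\eqref{eqnPermCond} required by that theorem is supplied by the stochastic spreading and conditional Ising model assumptions, as verified in the Appendix. This disposes of the marginal half of the statement immediately.

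For the Type I error bound on Algorithm~\ref{algComposite}, I would decompose the rejection event as the disjoint union \(\{S_1(\infected) \geq t_{\alpha,1}\} \sqcup \{S_1(\infected) < t_{\alpha,1},\, S_2(\infected) \geq t_{\alpha,2}\}\), reducing the goal to bounding each of the two resulting probabilities by \(\alpha/2\). A preliminary simplification is that \(t_{\alpha,1}\) and \(t_{\alpha,2}\) are in fact deterministic functions of \((k, c)\): because \(\infspace_{k,c}\) is a single \(S_n\)-orbit, the multiset \(\{(S_1(\pi J), S_2(\pi J)) : \pi \in S_n\}\) does not depend on the representative \(J \in \infspace_{k,c}\), so both thresholds may be treated as constants throughout the subsequent probability computations. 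With \(t_{\alpha,1}\) a constant, the marginal equivalence for \(S_1\) gives \(\prob_0(S_1(\infected) \geq t_{\alpha,1}) = \frac{1}{n!} \sum_\pi \ind\{S_1(\pi\infected) \geq t_{\alpha,1}\} \leq \alpha/2\) by the algorithm's definition of \(t_{\alpha,1}\).

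The step I expect to be the main obstacle is the analogous bound \(\prob_0(S_1(\infected) < t_{\alpha,1},\, S_2(\infected) \geq t_{\alpha,2}) \leq \alpha/2\) for the second piece. This requires the full joint equivalence \((S_1(\infected), S_2(\infected)) \stackrel{d}{=} (S_1(\pi\infected), S_2(\pi\infected))\), which is not a formal consequence of the two marginal equivalences. My plan is: first, observe via orbit-stabilizer that \(\pi\infected\) is uniform on \(\infspace_{k,c}\) and independent of \(\infected\), so the right-hand joint distribution equals the \((S_1, S_2)\)-pushforward of the uniform distribution on \(\infspace_{k,c}\); second, use \(\Pi_1 \Pi_0 = \Pi_2 \Pi_0 = S_n\) to show that \(\Pi_1\) and \(\Pi_2\) each act transitively on the \(\Pi_0\)-orbits in \(\infspace_{k,c}\), which in particular forces all \(\Pi_0\)-orbits \(O\) to share a common cardinality; third, combine \(\Pi_0\)-invariance of the null density \(p_0\) (so that \(p_0(J) = q_O / |O|\) for \(J \in O\), with orbit-indexed weights \(q_O\) summing to \(1\)) with the transitivity to show that \(|O \cap \{(S_1, S_2) = (s_1, s_2)\}|\) is constant across the orbits \(O\), which collapses the difference between the pushforwards of \(p_0\) and of the uniform. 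Once this joint equivalence is in hand, the second piece equals \(\frac{1}{n!} \sum_\pi \ind\{S_1(\pi\infected) < t_{\alpha,1},\, S_2(\pi\infected) \geq t_{\alpha,2}\}\), which is at most \(\alpha/2\) by the algorithm's definition of \(t_{\alpha,2}\). Summing the two bounds yields \(\prob_0(\text{reject}) \leq \alpha\), completing the proof.
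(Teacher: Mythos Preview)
The paper does not supply a separate proof of Theorem~\ref{theoremCompositeAlpha}; it is stated as following ``in particular'' from the two marginal distributional equalities, which are themselves immediate from Theorem~\ref{ThmPermStat}. Your first paragraph handles those marginals correctly, and your observation that the thresholds \(t_{\alpha,1}, t_{\alpha,2}\) are deterministic functions of \((k,c)\) (because the multiset \(\{(S_1(\pi J),S_2(\pi J)):\pi\in S_n\}\) is the same for every \(J\in\infspace_{k,c}\)) is exactly right and is what makes the first piece \(\prob_0(S_1(\infected)\ge t_{\alpha,1})\le\alpha/2\) go through. You are also correct that the second piece of the rejection event requires the \emph{joint} equivalence \((S_1(\infected),S_2(\infected))\stackrel{d}{=}(S_1(\pi\infected),S_2(\pi\infected))\), and that this does not follow formally from the two marginal statements; so you have put your finger on precisely the point the paper leaves implicit.

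The difficulty is that your proposed route to the joint equivalence does not close the gap. Step~2 (``\(\Pi_1\) acts transitively on the \(\Pi_0\)-orbits'') is not a well-defined group action unless \(\Pi_1\) normalizes \(\Pi_0\), which is not assumed, so the conclusion in step~3 that all \(\Pi_0\)-orbits share a common cardinality is not justified. More seriously, step~4 is where the argument breaks: what the marginal equivalences give you (for every \(\Pi_0\)-invariant null law, hence by specializing to the uniform on a single orbit) is that \(|O\cap S_i^{-1}(s_i)|/|O|\) is constant across \(\Pi_0\)-orbits \(O\) for each \(i\) \emph{separately}. Knowing that each \(\Pi_0\)-orbit meets each \(S_1\)-level set and each \(S_2\)-level set in the correct proportion does \emph{not} imply the same for intersections \(S_1^{-1}(s_1)\cap S_2^{-1}(s_2)\); one can build abstract configurations on four points where the two marginal equidistribution statements hold but the joint one fails. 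A clean sufficient hypothesis that \emph{does} make the whole argument work is \((\Pi_1\cap\Pi_2)\,\Pi_0=S_n\): then the bivariate map \((S_1,S_2)\) is itself a \((\Pi_1\cap\Pi_2)\)-invariant statistic, and Theorem~\ref{ThmPermStat} applied to that single statistic yields the joint equivalence directly. This condition is strictly stronger than \(\Pi_1\Pi_0=\Pi_2\Pi_0=S_n\) in general, so either an additional argument specific to the situation is needed, or the theorem as written carries an implicit extra assumption.
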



\subsection{Conditioning on censoring}
\label{subsecCondCens}

One important variant of the permutation test above involves conditioning on the censored vertices. This is crucial when censoring may not occur uniformly at random, but the stochastic spread still follows our model and occurs independently of the censoring.
For a concrete example, consider the HIV graph in Section~\ref{secHIV} of the main paper.

Accordingly, we devise an alternative permutation testing procedure that conditions on the location of the censored nodes in the network. The natural analog of our permutation test is to simply build a histogram for the values of the test statistic under the subset of random permutations that fix the locations of the censored nodes. With a small abuse of notation, let $S_{n-c}$ denote this set of $(n-c)!$ permutations. The exact and approximate permutation tests are described in Algorithms~\ref{AlgCondCensored} and~\ref{AlgCondApprox}:

\begin{algorithm}[!h]
\SetKwInOut{Input}{Input}
\Input{Type I error tolerance $\alpha > 0$, observed infection vector $\infected$}
For each $\pi \in S_{n - c}$, compute the statistic $S(\pi \infected)$

Determine a threshold $t_\alpha$ such that
\begin{equation*}
t_\alpha
=
\sup\left\{t \in \support(S):
\frac{1}{(n - c)!} 
\sum_{\pi \in S_{n - c}} \ind\{S(\pi \infected) \geq t\} 
> 
\alpha
\right\}
\end{equation*}

Reject $H_0$ if and only if $S(\infected) > t_\alpha$
\caption{Permutation test (exact)}
\label{AlgCondCensored}
\end{algorithm}

\begin{algorithm}[!h]
\SetKwInOut{Input}{Input}
\Input{Type I error tolerance $\alpha > 0$, integer $N_{\sims} \ge 1$, observed infection vector $\infected$}
Draw $\pi_1, \dots, \pi_{N_{\sims}} \stackrel{i.i.d.}{\sim} \text{Uniform}(S_{n - c})$ and compute the statistics $S(\pi_i \infected)$

Determine a threshold $\hat{t}_\alpha$ such that
\begin{equation*}
\hat{t}_{\alpha}
=
\sup\left\{
t \in \support(S):
\frac{1}{N_{\sims}} \sum_{i = 1}^{N_{\sims}} \ind\{S(\pi_i \infected) \geq t\} > \alpha
\right\}
\end{equation*}

Reject $H_0$ if and only if $S(\infected) > \hat{t}_\alpha$
\caption{Permutation test (approximate)}
\label{AlgCondApprox}
\end{algorithm}

The validity of the permutation tests is again stated in terms of automorphism groups of the null and alternative graphs, where we restrict our attention to automorphisms that fix the identity of the censored vertices. We define the set of possible infection vectors with $k$ infected nodes and a fixed censoring pattern as
\[
\infspace_{k, c}^{cc}
=
\{\infected \in \{0, 1\}^{n - c} \times \{\star\}^{c} : \text{\(\infected\) contains exactly \(k\) 1's.}\},
\]
where the ``cc'' stands for ``conditional on the censoring.''
If we let \(\Pi_{0, cc}\) and \(\Pi_{1, cc}\) denote the stabilizer subgroups of \(\infspace_{k, c}^{cc}\) in \(\Pi_{0}\) and \(\Pi_{1}\); i.e.,
\begin{equation*}
\Pi_{i, cc} := \left\{\pi \in \Pi_i: \pi \infspace_{k, c}^{cc} = \infspace_{k, c}^{cc}\right\},
\end{equation*}
then permutations in $\Pi_{i,cc}$ act separately on the sets of censored and uncensored nodes. Thus, we may write \(\Pi_{i, cc} = \Pi_{i, cc}^{u} \times \Pi_{i, cc}^{c}\) as a direct product of subgroups of \(\Pi_{i}\) acting on the uncensored and censored nodes.
The analog of Theorems~\ref{ThmPermStat} and~\ref{ThmPermTest}, guaranteeing the success of the permutation test, is stated in terms of the subgroup \(\Pi_{10}^{u} = \Pi_{1, cc}^{u} \Pi_{0, cc}^{u}\):

\begin{theorem}
Let \(\pi \sim \text{Uniform}(S_{n - c})\).
If \(\Pi_{10}^{u} = S_{n - c}\), then \(S(\infected)\) and \(S(\pi \infected)\) have the same distribution under the null hypothesis (conditioned on the identities of the censored nodes).
In particular, the permutation test in Algorithm~\ref{AlgCondCensored} controls Type I error at level \(\alpha\).
\label{ThmCondCensDist}
\end{theorem}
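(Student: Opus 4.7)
The plan is to adapt the proofs of Theorems~\ref{ThmPermStat} and~\ref{ThmPermTest} to the conditional-censoring setting, with $S_{n-c}$ playing the role of $S_n$ and the subgroups $\Pi_{i,cc}^u$ playing the role of $\Pi_i$. The first step is to establish the conditional analog of equation~\eqref{eqnPermCond}: for every $\pi_0 \in \Pi_{0,cc}^u$,
\[
\prob_0(\infected = J \mid \infected^\star) = \prob_0(\pi_0 \infected = J \mid \infected^\star).
\]
Since $\pi_0$ is an automorphism of $\graphset_0$ that fixes each censored vertex (it lies in the uncensored factor of the direct product decomposition $\Pi_{0,cc} = \Pi_{0,cc}^u \times \Pi_{0,cc}^c$), it permutes the uncensored vertices among themselves while preserving $\graphset_0$. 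Using the expressions~\eqref{eqnMeasure}--\eqref{eqnProbWithCensoring}, I would verify that $J' \mapsto \pi_0 J'$ is a bijection $\uncensored(J) \to \uncensored(\pi_0 J)$ preserving $\prob^u$, which gives the conditional invariance after a routine reindexing.

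Second, I would use the hypothesis $\Pi_{10}^u = S_{n-c}$ to decompose an arbitrary $\pi \in S_{n-c}$ as $\pi = \pi_1 \pi_0$ with $\pi_1 \in \Pi_{1,cc}^u$ and $\pi_0 \in \Pi_{0,cc}^u$. Since $S$ is $\Pi_1$-invariant and $\Pi_{1,cc}^u \subseteq \Pi_1$, I obtain
\[
S(\pi \infected) = S(\pi_1 \pi_0 \infected) = S(\pi_0 \infected).
\]
Combined with Step~1, this yields $S(\pi \infected) \stackrel{d}{=} S(\infected)$ conditional on $\infected^\star$ for every deterministic $\pi \in S_{n-c}$, and hence also for $\pi$ drawn uniformly from $S_{n-c}$, proving the distributional claim.

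Third, to obtain Type I error control for Algorithm~\ref{AlgCondCensored}, I would invoke the standard permutation-test argument: conditional on $\infected^\star$ and under the null, the tuple $(S(\pi \infected))_{\pi \in S_{n-c}}$ has a joint distribution that is invariant under further left-multiplication by any element of $S_{n-c}$, so the rank of $S(\infected)$ within the list $\{S(\pi \infected) : \pi \in S_{n-c}\}$ is uniform up to ties. Consequently, the data-driven threshold $t_\alpha$ defined in Algorithm~\ref{AlgCondCensored} satisfies $\prob_0(S(\infected) > t_\alpha \mid \infected^\star) \le \alpha$, and marginalizing over $\infected^\star$ preserves the bound.

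The main obstacle is the bookkeeping in Step~1: I must verify that $J' \mapsto \pi_0 J'$ really furnishes a measure-preserving bijection $\uncensored(J) \to \uncensored(\pi_0 J)$, so that both $\measure(\,\cdot\,;\eta)$ and the conditional normalizing constant are preserved by the action of $\pi_0$ when the censoring pattern is held fixed rather than averaged over. This reduces to combining (i) $\pi_0 \in \Aut(\graphset_0)$ to match uncensored infection probabilities termwise, and (ii) the fact that $\pi_0$ acts as the identity on the censored coordinates to match the summation indices on both sides.
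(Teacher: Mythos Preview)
Your proposal is correct and follows exactly the route the paper indicates: the paper omits the proof entirely, stating only that it ``closely parallels that of Theorems~\ref{ThmPermStat} and~\ref{ThmPermTest},'' and your three steps carry out precisely that adaptation, with $S_{n-c}$, $\Pi_{0,cc}^{u}$, and $\Pi_{1,cc}^{u}$ replacing $S_n$, $\Pi_0$, and $\Pi_1$. Your Step~2 (decomposing each $\pi\in S_{n-c}$ as $\pi_1\pi_0$ and using $\Pi_1$-invariance of $S$) is in fact a slightly cleaner rephrasing of the coset-representative argument in the paper's proof of Theorem~\ref{ThmPermStat}, and your Step~3 (rank uniformity) is an equivalent variant of the paper's conditioning argument in Theorem~\ref{ThmPermTest}.
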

\noindent The proof closely parallels that of Theorems~\ref{ThmPermStat} and~\ref{ThmPermTest}, so we omit the details.


\subsection{Computational considerations}

Here, we comment on the task of verifying the condition 
\(\Pi_{10} = S_{n}\). As demonstrated in Section~\ref{sec: examples} of the main paper, it is sometimes possible to verify this condition analytically; however, we may need to check the condition computationally for a fixed pair of null and alternative graphs.

The computational complexity of computing the automorphism group of a graph is not known in general, and it is often studied as a reduction of the problem of determining whether two graphs are isomorphic~\cite{lubiw1981}. In the case of graphs of bounded degree, the problem is polynomial~\cite{luks1982}, and a number of algorithms have been proposed that test for nontrivial automorphisms, such as NAUTY~\cite{mckay1978}, SAUCY~\cite{darga2004}, and BLISS~\cite{junttila2007}. Empirical evidence shows that these algorithms also perform reasonably well on moderately-sized graphs. Once $\Pi_0$ and $\Pi_1$ have been computed, we still need to verify that \(\Pi = S_{n}\). 
One approach is to compute
\begin{equation}
|\Pi_{10}|
=
|\Pi_{1} \Pi_{0}|
=
\frac{|\Pi_{1}| |\Pi_{0}|}{|\Pi_{1} \cap \Pi_{0}|},
\label{eqn: pi size}
\end{equation}
and determine whether this expression is equal to \(|S_{n}| = n!\).
Equation~\eqref{eqn: pi size} may be easily verified by considering cosets~\cite{dummit2004}.

\subsection{Multiple infection spreads}
\label{subsec: multiple}

Thus far, we have only discussed the case of observing a single infection spreading vector \(\infected\),
but our results may easily be extended to the case of multiple spreads.
Let \(\infected(1), \ldots, \infected(m)\) be observation vectors from i.i.d.\ infection spreads on $\graphset_0$. Generalizing our earlier framework, we say that a statistic \(S\) is \(\Pi_{1}^{m}\)-\emph{invariant} if
\begin{equation}
S\left(J(1), \ldots, J(m)\right) = S\left(\pi^{(1)} J(1), \ldots, \pi^{(m)} J(m)\right),
\label{eqn: pi1m invariant}
\end{equation}
for any $J(1), \dots, J(m) \in \infspace_{k,c}$ and any permutations \(\pi^{(1)}, \ldots, \pi^{(m)} \in \Pi_{1}\). The proof of the following theorem is analogous to the proof of Theorem~\ref{ThmPermStat}:

\begin{theorem}
Let $\pi^{(1)}, \ldots, \pi^{(m)} \stackrel{i.i.d.}{\sim} \text{Uniform}(S_n)$. If $\Pi = S_n$, then \(S\left(\infected(1), \ldots, \infected(m)\right)\) and \(S\left(\pi^{(1)} \infected(1), \ldots, \pi^{(m)} \infected(m)\right)\) have the same distribution under $H_0$.
\label{ThmMultiple}
\end{theorem}

Note that when multiple spreads are observed on the same graph, one natural approach is to infer the edges of the graph using an appropriate estimation procedure~\cite{gomez2016, netrapalli2012}. On the other hand, Theorem~\ref{ThmMultiple} shows that a permutation test may also be employed in a hypothesis testing framework. The description of the permutation test is identical to Algorithm~\ref{AlgPermExact}, except the statistic $S(\infected)$ is replaced by $S\left(\pi^{(1)} \infected(1), \dots, \pi^{(m)} \infected(m)\right)$, and the average is taken over all $(n!)^m$ possible choices of $\left(\pi^{(1)}, \dots, \pi^{(m)}\right)$. 

A special case of a $\Pi_1^m$-invariant statistic is the average of all edges-within statistics:
\begin{equation}
\label{EqnEdgesMult}
\Wbar\left(\pi^{(1)} \infected(1), \dots, \pi^{(m)} \infected(m)\right) := \frac{1}{m} \sum_{i=1}^m W_{\graphset_1}\left(\pi^{(i)} \infected(i)\right).
\end{equation}
The dependence on $m$ leads to an exponential reduction in the Type II error, due to the concentration of the empirical average of $m$ samples of the edges-within statistic:

\begin{proposition}
Suppose $\graphset_0$ is the star graph and \(\graphset_{1}\) is a connected vertex-transitive graph of degree \(D\).
Let \(\psi_{\overline{W}, \alpha}\) be the level $\alpha$ permutation test based on the average edges-within statistic~\eqref{EqnEdgesMult}. The risk of this test is bounded by
\begin{align*}
& \begin{aligned}
R_{k, 0}(\psi_{\overline{W}, \alpha}, \beta) 
&\leq 
\alpha 
+ \exp\left\{-\frac{2m}{k D^{2}}\left( \frac{D}{2} C_{k} H(\beta)
- \frac{D k(k - 1)}{2 (n - 1)} 
- \sqrt{\frac{kD^2}{2m} \log \frac{1}{\alpha}}\right)^{2}\right\}.
   \end{aligned}
\end{align*}
\label{PropStarRiskMult}
\end{proposition}


We may also obtain analogs of Propositions~\ref{prop: star center} and~\ref{prop: star risk}. The MLE rejects $H_0$ when the average center indicator statistic $\Cbar$ exceeds a threshold. We have the following result:

\begin{proposition}
\label{PropMultRisk}
Suppose $\graphset_0$ is a vertex-transitive graph and \(\graphset_{1}\) is the star graph. 
Let \(\psi_{\Cbar, \alpha}\) be the level \(\alpha\) permutation test based on the average center indicator statistic.
Let
\begin{equation*}
p_{k,0}(\beta) 
:= 
\begin{cases}
\left\{1- \exp\left(-\frac{k + \beta k(k-1)/2}{(n-k+1) + (k-1)\beta}\right)\right\}, & \text{if } \beta \ge 1, \\
\left\{1 - \exp\left(-\frac{k + \beta k(k-1)/2}{n}\right)\right\}, & \text{if } \beta < 1.
\end{cases}
\end{equation*}
Then we have the risk bound
\begin{equation*}
R_{k,0}(\psi_{\Cbar, \alpha}, \beta) 
\le 
\alpha + \exp\left(-2m\left(\frac{k}{n} + \sqrt{\frac{1}{2m} \log\frac{1}{\alpha}} - p_{k,0}(\beta)\right)^2\right).
\end{equation*}
\end{proposition}


\subsection{Relaxing the choice of alternative graph}
\label{subsec: relaxing alt graph}

If $\Pi_1 \Pi_0 \neq S_n$, then $S(\infected)$ and $S(\pi \infected)$ may not have the same distribution under $H_0$, undermining the theory behind Algorithm~\ref{AlgPermExact}. Nonetheless, it may be worthwhile to consider an alternative statistic $S'$ that is $\Pi_1'$-invariant for a subset $\Pi_1' \subseteq S_n$ such that $\Pi_1' \Pi_0 = S_n$. A simple modification of the proof of Theorem~\ref{ThmPermStat} furnishes the following result:

\begin{theorem}
Let \(\Pi_{0} = \Aut(\graphset_0)\),
and let \(\Pi_{1}'\) be a subset of \(S_{n}\) such that $\Pi_1' \Pi_0 = S_n$. 
Let \(\pi \sim \text{Uniform}(S_{n})\).
If \(S'\) is a \(\Pi_{1}'\)-invariant statistic; i.e., $S'(J) = S'(\pi_1'J)$ for all $\pi_1'$ in $\Pi_1'$ and $J$ in $\infspace_{k,c}$, 
then \(S'(\infected)\) and \(S'(\pi \infected)\) have the same distribution under the null hypothesis.
\label{thm: relaxing}
\end{theorem}

In particular, Theorem~\ref{thm: relaxing} establishes that the permutation test in Algorithm~\ref{AlgPermExact} controls the Type I error at level $\alpha$ for any $\Pi_1'$-invariant statistic. 
Theorem~\ref{thm: relaxing} may provide useful guarantees when \(\graphset_{1}\) is close to having $\Pi_1'$ as its automorphism group. 
For instance, we may have $\Pi_1' = \Aut(\graphset_1')$ for some graph $\graphset_1'$ that is only a slight modification of $\graphset_1$. Consider the following example: Let \(\graphset_{0}\) be the star graph on \(n\) vertices, 
and let \(\graphset_{1}\) be the line graph $L_n$ on \(n\) vertices. Clearly, $L_n$ is not vertex-transitive, so $\Pi_1 \Pi_0 \neq S_n$. On the other hand, for large \(n\), the line graph is almost the cycle graph, which we denote by \(C_{n}\). Let \(W_{L_{n}}\) and \(W_{C_{n}}\) be the edges-within statistic on the line graph and the cycle graph.
Then
\begin{align*}
  &\begin{aligned}
W_{C_{n}}(J) & = W_{L_n}(J) + \ind\{J_n = J_1 = 1\}. 
  \end{aligned}
\end{align*}
As a result, these statistics are quite similar, so the risk of a permutation test based on the statistic \(W_{C_{n}}\) is also similar. We have the following result:

\begin{corollary}
Let $\graphset_1 = L_n$, and let $\psi_{W, \alpha}$ be the level $\alpha$ permutation test based on $W_{C_n}$. Suppose $k < n / 2$. Then
\begin{multline*}
R_{k, 0}(\psi_{W,\alpha}, \beta) 
\le 
\alpha +  \exp\Bigg\{-\frac{1}{2k} \Bigg(\frac{1}{n} \cdot (k-1) 2^{k-1} \cdot \frac{n-k+1}{n} \\
\cdot \prod_{m=1}^{k-1} \frac{\beta}{n-m+2\beta}
- \frac{k(k-1)}{n-1} - \sqrt{2k\log\left(\frac{1}{\alpha}\right)}\Bigg)^{2}\Bigg\}.
\end{multline*}
\label{CorStarRisk2}
\end{corollary}

Compared with Corollary~\ref{CorStarRisk0}, the expression in Corollary~\ref{CorStarRisk2} only contains an additional factor of 
$(n - k + 1) / n$ in the first term of the exponent.


\subsection{Random graphs}
\label{AppRandomGraphs}

For our final extension, we turn to random graphs.
We first introduce some additional notation.

Throughout this section, consider \(\graphset_{0}\) and \(\graphset_{1}\) to be random variables, and let \(\Pi_{0}''\) and \(\Pi_{1}''\) be the subgroups of permutations such that for any fixed graph \(G\) and permutation \(\pi_{i}\) in \(\Pi_{i}''\), we have
\begin{equation}
\prob(\graphset_{i} = G)
=
\prob(\pi_{i} \graphset_{i} = G).
\label{eqnRandomGraphPerm}
\end{equation}
As an example, an Erd\H{o}s-Renyi random graph satisfies equation~\eqref{eqnRandomGraphPerm} with \(\Pi_{i}'' = S_{n}\).
Let \(S(\infected, \graphset_{1})\) be a statistic, where we explicitly include the graph dependence.
We are most interested in \(W(\infected, \graphset_{1})\), which we use to denote the usual edges-within statistic on the random graph \(\graphset_{1}\).
We have the following lemma:

\begin{lemma}
Suppose that under \(H_{0}\), the random variables \(\infected\) and \(\graphset_{1}\) satisfy
\begin{equation}
\prob(S(\infected, \graphset_{1}) \in \mathcal{B})
=
\prob(S(\pi \infected, \graphset_{1}) \in \mathcal{B}),
\label{eqnRandomGraphsCondition}
\end{equation}
for all \(\pi\) in \(S_{n}\).
Then we have 
\begin{equation}
\prob(S(\infected, \graphset_{1}) \in \mathcal{B})
=
\frac{1}{n!}\sum_{\pi \in S_{n}}
\prob(S(\pi \infected, \graphset_{1}) \in \mathcal{B}).
\label{eqnRandomGraphsConclusion}
\end{equation}
\label{lemmaRandomGraphs}
\end{lemma}

The proof is obtained by averaging over \(S_{n}\).

\begin{remark}
The condition in equation~\eqref{eqnRandomGraphsCondition} is satisfied in a number of cases.
One example is the following: if \(\graphset_{0}\) and \(\graphset_{1}\) are drawn independently and the former is an Erd\H{o}s-Renyi random graph, we have
\begin{align*}
& \begin{aligned}
\prob((\infected, \graphset_{1}) = (J, G))
&=
\prob(\infected = J) \prob(\graphset = G) \\ 
&=
\prob(\pi \infected = J) \prob(\graphset = G) \\ 
&=
\prob((\pi \infected, \graphset_{1}) = (J, G)),
\end{aligned}
\end{align*}
for any \(\pi\) in \(S_{n}\).
The first and last equalities are due to independence, and the second is the result of the \(S_{n}\)-invariance of the Erd\H{o}s-Renyi random graph.

Another example satisfying equation~\eqref{eqnRandomGraphsCondition} is the case where \(S = W\) and the topology of the alternative \(\graphset_{1}\) is fixed, but the labeling of the vertices is uniformly random, such as in \cite{milling2015}.
In this case, we have
\begin{align*}
& \begin{aligned}
\prob(W(\infected, \graphset_{1}) \in \mathcal{B})
&=
\prob(W(\infected, \pi \graphset_{1}) \in \mathcal{B})  
=
\prob(W(\pi \infected, \graphset_{1}) \in \mathcal{B}),
\end{aligned}
\end{align*}
where we have used the fact that \(\Pi_{1}'' = S_{n}\) and the equality \(W(\infected, \pi \graphset_{1}) = W(\pi \infected,  \graphset_{1})\).
Additionally, we have assumed the independence of \(\graphset_{0}\) and \(\graphset_{1}\).
\end{remark}

Turning to the conclusion of Lemma~\ref{lemmaRandomGraphs}, note that equation~\eqref{eqnRandomGraphsConclusion} implies that the permutation test controls the Type I error on average with respect to draws of \(\graphset_{1}\).
In the special case where the topology of \(\graphset_{1}\) is fixed---or alternatively, \(\graphset_{1}\) is deterministic up to a permutation---equation~\eqref{eqnRandomGraphsConclusion} implies that the permutation test does control the Type I error.
To see this, suppose that \(\graphset_{1}\) is chosen from \(\graphspace = S_{n}\{G^{\star}\}\) for some fixed \(G^{\star}\), and let \(S\) be a statistic that is invariant in the sense that \(S(\infected, G) = S(\pi \infected, \pi G)\).
We have
\begin{align*}
& \begin{aligned}
\prob(S(\infected, \graphset_{1}) \in \mathcal{B})
&=
\frac{1}{n!} \sum_{\pi \in S_{n}} \prob(S(\pi \infected, \graphset_{1}) \in \mathcal{B}) \\
&=
\frac{1}{n!} \sum_{\pi \in S_{n}} \sum_{G \in \graphspace} 
\prob(\graphset_{1} = G) \prob(S(\pi \infected, G) \in \mathcal{B}) \\ 
&=
\frac{1}{n!} \sum_{\pi \in S_{n}} \sum_{G \in \graphspace} 
\prob(\graphset_{1} = G) \prob(S( \pi \infected, \pi' G^{\star}) \in \mathcal{B}) \\
&=
\frac{1}{n!} \sum_{\pi'' \in S_{n}} \sum_{G \in \graphspace} 
\prob(\graphset_{1} = G) \prob(S(\pi'' \infected, G^{\star}) \in \mathcal{B}) \\
&=
\frac{1}{n!} \sum_{\pi'' \in S_{n}} 
\prob(S(\pi'' \infected, G^{\star}) \in \mathcal{B})
\sum_{G \in \graphspace} 
\prob(\graphset_{1} = G)  \\
&=
\frac{1}{n!} \sum_{\pi'' \in S_{n}} 
\prob(S(\pi'' \infected, G^{\star}) \in \mathcal{B}).
   \end{aligned}
\end{align*}
Note that the second equality is by independence, the third equality is by the deterministic topology, and the fourth equality is by the invariance of \(S\) described above.
Thus, the permutation test indeed controls the Type I error in this case.

\section{Details for general graphs}
\label{app:general_graphs}

In this appendix, we detail the discretization that we need to test general graphs.
The overall idea is straightforward.
We have a space of probability measures \(\measuremap(\graphset, \completespace)\) where \(\prob_{i, \beta} = \measuremap(\graphset_{i}, \beta)\) and $\completespace = [0, \infty]$. 
We wish to find a \(\delta\)-net in total variation distance for the set of distributions \(\measuremap(\graphset, \paramset) \subseteq \measuremap(\graphset, \completespace)\),
corresponding to a finite subset \(\paramset_{D} \subset \paramset\).
Using this finite subset, we can approximately simulate the possible null distributions to conduct hypothesis tests and build confidence intervals.

In order to construct a discretization \(\paramset_{D}\) for a given \(\paramset\), our strategy is to first construct a finite set \(\completespace_{D}\) of \(\completespace\) such that \(\measuremap(\graphset, \completespace_{D})\) yields a \(\delta\)-net of \(\measuremap(\graphset, \completespace)\).
Then, for a discretization function \(F_{D}\) satisfying equation~\eqref{eqnDiscretizationFunction},
we define the discretization set \(\paramset_{D} := F_{D}(\paramset)\).

The first question one might have is whether such a \(\completespace_{D}\) exists.
While our construction proves that it does, we can also reason that this must be true from a topological perspective.
Using the standard topology associated with \(\completespace\) and the topology on \(\measuremap(\graphset, \completespace)\) induced by the total variation distance as a metric, the compactness of \(\completespace\) and the continuity of the function \(\measuremap(\graphset, \cdot)\) imply that \(\measurespace\) must also be compact, which is certainly sufficient for the existence of a \(\delta\)-net. 

Given the existence of a cover \(\completespace_{D}\), one might wonder why
we cannot simply define $\paramset_D = \completespace_{D}$. 
Such a definition would certainly yield a valid test; however, to obtain a more powerful test, it is beneficial to eliminate superfluous points of \(\completespace_{D}\) when forming \(\paramset_{D}\).
In particular, it is important to eliminate small values of \(\beta\) if possible, since such values of \(\beta\) produce measures \(\measuremap(\graphset, \beta) = \prob _{\beta}\) that do not depend strongly on \(\graphset\).
From a computational viewpoint, eliminating superfluous values of \(\beta\) allows us to conduct our tests with less computation.

In the remaining subsections, we introduce the discretization that our general graph algorithms require; we discuss computation; we suggest algorithms that allow us to more closely approximate the likelihood ratio; and we give further details on the HIV graph example.

\subsection{Discretization}
\label{subsecDiscretization}

We provide precise statements and proofs with one additional level of generality.
Specifically, we define a function \(w: \edgeset \to \reals_{+}\) to be an edge weight function, and for adjacent vertices \(u\) and \(v\), we define the random variable on the edge $(u,v)$ to be \(T_{uv} \sim \text{Exp}(\beta w(u, v))\) in the stochastic spreading model.
In the Ising model, we define
\[
\energy(J) 
=
-\sum_{(u, v) \in \edgeset} w(u, v) J_{u} J_{v}.
\]
The only constraint is that \(w(u, v) \geq 1\) for each edge \((u, v)\).
Thus, the models in Section~\ref{SecInfectModels} correspond to the special case where \(w\) is \(1\).
Finally, we use the notation \(\wneighbors_{t, v}\) and \(\wneighbors_{t}\) to denote the weighted infected in-degree for vertex \(v\) and the weighted cut between infected and uninfected nodes at time $t$, respectively. Accordingly, for a path $P_{1:k} := (P_1, \dots, P_k)$, we write $W_t(P_{1:k})$ to be the weighted cut when $\{P_1, \dots, P_k\}$ are infected.

We now define the following discretization with its associated discretization function:

\begin{definition}
Let \(\graphset\) be a weighted graph.
Let
\[
M_{0} 
=
\sum_{t = 1}^{k + c}
\max_{P_{1:t - 1}}
\frac{\wneighbors_{t}(P_{1:t - 1})}{n + 1 - t}.
\]
Define \(M_{1}\)  to be an integer satisfying
\[
M_{1}
\geq
\frac{1}{2 \delta}
M_{0}
.
\]
Define \(M_{2}\) to be an integer such that 
\[
M_{2}
\geq
\frac{k + c}{2\delta}. 
\]
Let \(N_{0}\) be an integer such that
\[
N_{0}
\geq 
(k + c)\left(n - \frac{k + c - 1}{2} \right).
\]
Let \(N_{1}\) be an integer satisfying
\[
N_{1} 
\geq 
\frac{k + c}{2 \delta}
\left(
\log
\frac{N_{0}}{\delta}
+
\log
\frac{M_{1}}{M_{2}}
\right).
\]
We define the one-dimensional discretization 
\(\completespace_{D}\)
to include the following values of \(\beta\):
\begin{itemize}
\item 
\(\beta_{i} = \frac{i}{M_{1}}\), for \(i = 0, 1, \ldots, M_{2}\), and

\item 
\(\beta_{M_{2} + i} 
= 
\beta_{M_{2} + i - 1} 
\exp\left(\frac{2\delta}{k + c}\right)\), for \(i = 1, \ldots, N_{1}\).
\end{itemize}
Finally, the discretization function \(F_{D}\) associated with this discretization is defined by
\begin{align*}
& \begin{aligned}
F_{D}(\beta) 
&:=
\begin{cases}
\argmin_{\beta' \in \completespace_{D}} |\beta - \beta'|, 
& \text{for } \beta \leq \frac{M_{2}}{M_{1}}, \\
\max\left\{\beta' \in \completespace_D: \beta' \leq \beta  \leq \exp\left(\frac{\delta}{k + c}\right)\beta' \right\} & \mbox{} \\
\quad \text{ or }
\min\left\{\beta' \in \completespace_D: \exp\left(-\frac{\delta}{k + c}\right)\beta' < \beta  \leq \beta'\right\},
& \text{for } \frac{M_{2}}{M_{1}} < \beta \leq \frac{N_{0}}{\delta}, \\
\max\{\beta' \in \completespace_{D}\}, & \text{for } \frac{N_{0}}{\delta} < \beta.
\end{cases}
   \end{aligned}
\end{align*}
\label{defPinskerCouplingDiscretization}
\end{definition}

Note that for  \(\beta\) satisfying \(M_{2} / M_{1} < \beta \leq N_{0}/\delta\), one of the sets must be nonempty, so \(\beta'\) is well-defined.
For the graph \(\graphset_{i}\), we denote the corresponding discretization set by \(\completespace_{i, D}\).
Additionally, we denote the size of the set by \(N = |\completespace_{D}| = M_{2} + N_{1} + 1\).
We have the following proposition:

\begin{proposition}
For any graph \(\graphset\), the discretization \(\completespace_{D}\) with discretization function \(F_{D}\)  satisfies equation~\eqref{eqnDiscretizationFunction}.
\label{propDiscretizationMain}
\end{proposition}

The proof is in Appendix~\ref{SecDiscProof}.

\subsection{Computation}
\label{subsecComputation}

We now comment on the computational aspects of Algorithm~\ref{algorithmPCDiscretizationTest}, noting that the computation for Algorithm~\ref{algorithmOneStatisticConfidenceSet} is identical.
To run this algorithm, we first need to compute \(M_{0}\).
Each summand yields a cardinality-constrained weighted max-cut problem.
In general, the associated unconstrained decision problem is NP-complete \cite{karp1972}, but there are polynomial-time approximation algorithms \cite{goemans1995} and polynomial time algorithms for planar graphs \cite{hadlock1975}.
Alternatively, we can use the sum of the largest \(t - 1\) weighted degrees to bound each \(\wneighbors_{t}(P_{1:t-1})\). 
A good, computable choice is the minimum of this crude degree bound and a scaled approximate solution of the maxcut problem, i.e., the value of the approximate maxcut solution divided by the approximation factor.

Next, we need to run \(N \times N_{\sims}\) simulations.
Using a simple weighted degree upper bound, we can give an upper bound on the number of simulations required.
Let \(D\) be the maximum weighted degree in the graph.
Clearly, it suffices to have
\[
N
\geq 
1 
+ 
\frac{k + c}{2\delta}
\left(1 + 
\log \frac{N_{0}}{\delta}
+  \log \frac{D}{k + c} \cdot \sum_{t = 1}^{k + c} \frac{t-1}{n + 1 - t}
\right).
\]
Fortunately, this only depends logarithmically on the size of the graph \(n\), when $k$ and $c$ are fixed.
Additionally, since the order of the simulations does not matter, each of them can be run in parallel.
However, even for graphs of moderate size, this may be too many simulations for testing the entirety of \(\completespace\).
Thus, it may be necessary to restrict our consideration to a smaller parameter set \(\paramset\).
If we wish to restrict ourselves to the range \([0, R]\) for \(R > M_{2} / M_{1}\), 
then one could replace the \(N_{0} / \delta\) in the logarithm above with \(R\) to obtain an upper bond on the number of simulations.

Finally, we make a brief remark on choosing \(\delta\), \(\epsilon\), and \(\xi\).
All of these make the approximate threshold worse, so from a statistical perspective, we would like to minimize each of these parameters.
From a computational perspective, however, we would like all of these parameters to be large.
This tradeoff means that we have to select how to allocate a statistical approximation budget between the three parameters or, alternatively, decide how small we can make each parameter based on computational resources.
The important point is that while all parameters affect the statistical approximation of the threshold equally, they do not affect the computation equally.
As a rough approximation of \(N \times N_{\sims}\), the dependence on \(\delta\) is \(\delta^{-1}\); the dependence on \(\epsilon\) is \(\epsilon^{-2}\); and the dependence on \(\xi\) is \(\log \xi^{-1}\).
Thus, a sensible computational approach may be to make these terms roughly equal, i.e., set \(\delta^{-1} \approx \epsilon^{-2} \approx \log \xi^{-1}\).

\subsection{Likelihood-based tests}
\label{subsecLikelihoodBasedTests}

The setup that we have discussed so far has let us consider statistics such as \(W_{1} - c W_{0}\) for some constant \(c\), where $W_1$ and $W_0$ are the respective edges-within statistics.
However, suppose we wish to consider the log-likelihood ratio statistic
\begin{align*}
& \begin{aligned}
\ell(\beta_{0}, \beta_{1}; J) 
&=
\log L(\graphset_{1}, \beta_{1}; J)
- 
\log L(\graphset_{0}, \beta_{0}; J) 
=
\ell_{1}(\beta_{1}; J) - \ell_{0}(\beta_{0}; J). 
\end{aligned}
\end{align*}
Since the log-likelihood ratio is difficult to compute for our stochastic spreading model, we might also be interested in approximations of the form
\begin{equation}
S(\beta_{0}, \beta_{1}; J)
=
S_{1}(\beta_{1}; J) - S_{0}(\beta_{0}; J),
\label{eqnParameterDependentStatistic}
\end{equation}
where \(S_{i}\) is a computable approximation of \(\ell_{i}\).
In particular, we saw in Section~\ref{subsecLikelihoodRatio} that 
\[
S_{W}(\beta_{0}, \beta_{1}; J)
=
\beta_{1} W_{1}(I) - \beta_{0}W_{0}(I)
\]
is the first-order approximation of the log-likelihood ratio when \(c = 0\).

However, the statistic $S_W$ is now
a function of \(\infected\) \emph{and} the parameters \(\beta_{0}\) and \(\beta_{1}\).
We propose two possible solutions.
First, instead of finding a threshold \(\hat{t}\) to compare \(S(I)\) against, we find a function \(\hat{t}: \paramset_{0} \times \paramset_{1} \to \reals\) such that \(\hat{t}(\beta_{0}, \beta_{1}) \leq S(I; \beta_{0}, \beta_{1})\) with probability at most \(\alpha\) under the null hypothesis across values of \(\beta_{0}\) and \(\beta_{1}\).
The second option is to skirt dependencies on the parameter value in the statistic.
For example, using \(S_{W}\) as inspiration, we might consider a class of linear statistics
\begin{equation}
S(\beta_{0}, \beta_{1}; J)
=
\beta_{1} S_{1}(I) - \beta_{0} S_{0}(I).
\label{eqnLinearStatistics}
\end{equation}
Then by considering \(S_{0}\) and \(S_{1}\) separately, we can reject the null hypothesis if for each value of \(\beta_{0}\), either \(S_{0}\) is too small or \(S_{1}\) is too large.

Since the threshold function method is more cumbersome, we defer it to Appendix~\ref{SecThreshold}.
Now, we consider the second approach: two-statistic tests.
Let \(s_{0, 1 - \alpha, \beta}\) and \(s_{1, \alpha, \beta}\) denote the \(\alpha\) and \(1 - \alpha\) quantiles of \(S_{0}\) and \(S_{1}\) respectively, and let \(\hat{s}_{0, 1 - \alpha, \beta}\) and \(\hat{s}_{1, \alpha, \beta}\) denote their empirical counterparts.
The method is summarized in Algorithm~\ref{algorithmTwoStatistic}.

\begin{algorithm}[!h]
\SetKwInOut{Input}{Input}
\Input{Type I error tolerances \(\alpha_{0}\) and \(\alpha_{1}\) where $\alpha_{0} + \alpha_{1} = \alpha$, approximation parameters \(\epsilon\), \(\delta\), and \(\xi\), observed infection vector $\infected$, null graph \(\graphset_{0}\), statistics \(S_{0}\) and \(S_{1}\), discretization \(\paramset_{0, D}\), discretization function \(F_{D}\)}

Define 
\(N_{\sims} \geq \left(\frac{1}{2\epsilon^{2}} + \frac{8}{3 \epsilon}\right) \log \frac{1}{\xi}\)

For each \(\beta\) in \(\paramset_{0, D}\), simulate an infection \(N_{\sims}\) times on \(\graphset_{0}\) to obtain the approximate the \((\alpha_{0} - \gamma - \epsilon)\) quantile \(\hat{s}_{0, 1 - \alpha_{0} - \delta - \xi - \epsilon, \beta}\) and the \(1 - (\alpha_{1} - \delta - \xi - \epsilon)\) quantile \(\hat{s}_{1, (\alpha_{1} - \delta - \xi - \epsilon), \beta}\)

Reject the null hypothesis if for each \(\beta \in \paramset_{0, D}\), either
\(S_{0} < \hat{s}_{0, 1 - (\alpha_{0} - \delta - \xi - \epsilon), \beta}\)
or 
\(S_{1} > \hat{s}_{1, (\alpha_{1} - \delta - \xi - \epsilon), \beta}\)

\caption{Two-Statistic Test}
\label{algorithmTwoStatistic}
\end{algorithm}
\begin{proposition}
If the discretization \(\paramset_{D}\) with discretization function \(F_{D}\) satisfies equation~\eqref{eqnDiscretizationFunction}, then
Algorithm~\ref{algorithmTwoStatistic} controls the Type I error at a level \(\alpha\).
\label{propTwoStatistic}
\end{proposition}

The proof is in Appendix~\ref{SecTwoStatistic}. We explore the performance of Algorithm~\ref{algorithmTwoStatistic} in a simulation study in the next subsection.


\subsection{HIV graph details}

In this section, we discuss additional details for our HIV graph analysis as it relates to our methods for general graph tests.
First, we comment on the computation of the upper bound \(M_{0} \leq 348.8\).
We upper-bounded each \(\max_{P_{1:t - 1}} W_{t}(P_{1:t - 1})\) by the minimum of the maxcut value \(258\) and the sum of the degrees of the first \(t - 1\) infected vertices.
The bounds on \(M_{0}\) for \(\graphset_{25}\) and \(\graphset_{100}\) were computed analogously.
The maxcut can be seen in Figure~\ref{fig:disc_numerical:hivCut}.

\begin{figure}[!htb]
\centering
  \includegraphics[width=3in]{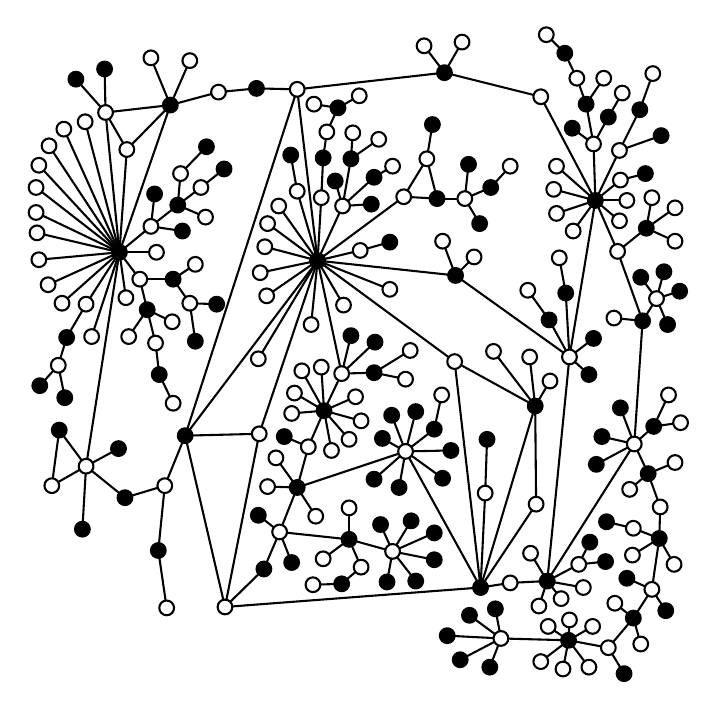}
\caption[The HIV Maxcut Partition]{The HIV  maxcut partition. 
The maximum cut consists of the sets of black and white vertices; the cut-set consists of the edges between the black and white vertices.
}
\label{fig:disc_numerical:hivCut}
\end{figure}



Next, we consider a confidence set of \(\beta\), assuming that the HIV graph is the true graph.
We apply the two-statistic test of Algorithm~\ref{algorithmTwoStatisticConfidenceSet} with the same algorithm parameters.
The resulting \(p\)-values are plotted as a function of \(\beta\) in Figure~\ref{fig:disc_numerical:betaPValuesHIV}.

\begin{figure}[!htb]
\centering
  \includegraphics[width=3.5in]{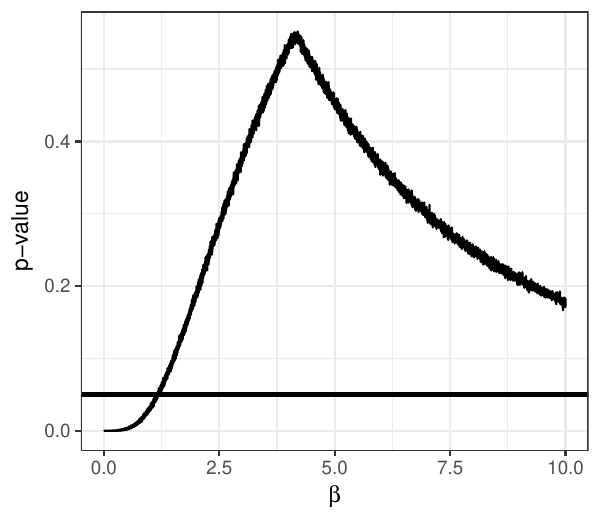}
\caption[\(p\)-values for the HIV Graph]{The \(p\)-values as a function of \(\beta\) for the HIV graph using the two-statistic test. 
The two statistics are \(W\) and \(-W\), i.e., the \(p\)-value is small if \(W\) is either too big or too small. 
Values of \(\beta\) above the horizontal line at \(0.05\) are in the \(0.8\)-confidence set.
}
\label{fig:disc_numerical:betaPValuesHIV}
\end{figure}

If we consider a confidence interval formed by the smallest and largest values of \(\beta\) for which the \(p\)-value is greater than \(0.05\), i.e., ignoring fluctuations around \(0.05\), the resulting interval is \([1.14, 10]\).
Thus, we have evidence for spreading on the HIV graph for \(\beta\) in this interval.

\section{Simulations}
\label{appSims}

This section reports the results of additional simulations we performed to validate our permutation test. Our main interest was to explore the power of our test on various graphs. We compared the algorithms proposed in \cite{milling2015}, computed with respect to the following test statistics:
\begin{enumerate}
\item The \emph{infection radius} $R(\infected)$, defined by
\begin{equation*}
\label{EqnRadius}
R_{\graphset}(\infected) 
:= 
\min_{v \in \vertexset} \left\{\max_{u \in \infected} d_{\graphset}(u, v)\right\},
\end{equation*}
where \(d_{\graphset}(u, v)\) is the length of the shortest path connecting \(u\) to \(v\) in $\graphset$.
\item The \emph{Steiner minimal tree}, defined for a subset of vertices \(M\) in a connected graph \(\graphset\) to be a subtree of minimal edge weight containing all vertices in \(M\) \cite{gilbert1968}. Here, $M$ is the set of infected vertices and each edge has unit weight.

We are interested in a hypothesis test computed with respect to the edge weight of the Steiner minimal tree. However, finding the Steiner minimal tree is NP-complete \cite{garey1979}, so the statistic used for simulations is based on an approximate Steiner minimal tree, computed via a tractable algorithm due to \cite{mehlhorn1988}. Let \(T(\infected)\) be the edge weight of the approximate Steiner minimal tree.
\end{enumerate}

The algorithms of \cite{milling2015} reject the null hypothesis if the statistic \(R(\infected)\) or \(T(\infected)\), evaluated on \(\graphset_{1}\), exceeds a certain threshold.
The corresponding tests are called the Threshold Ball (TB) and Threshold Tree (TT) algorithms. The threshold for the TB algorithm is computed theoretically to be $1.1d^{2} \left(kn \log( \log n) / (n - c) \right)^{\frac{1}{d}}$, when $\graphset_1$ is a toroidal grid of dimension \(d\). For non-toroidal graphs, no threshold is provided, so our approach was to try to select a value of $d$ appropriate to the threshold calculation. In the case of the TT algorithm, the threshold suggested by \cite{milling2015} is $k n (\log \log n)^{3} / (n - c)$.

Note that both $R$ and $T$ are defined in relation to the topology of $\graphset_1$, so they are $\Pi_1$-invariant. Thus, we may apply Algorithm~\ref{AlgPermApprox} and compare the results of the permutation test directly to the thresholding algorithms proposed by \cite{milling2015}. We write \(\alg_{2}(S, B, \alpha)\) to denote the result of applying Algorithm~\ref{AlgPermApprox} with \(\Pi_{1}\)-invariant statistic \(S\), based on \(B\) randomly drawn permutations, with Type I error level \(\alpha\). We write \(\alg_{TB}(d)\) and \(\alg_{TT}\) to denote the TB algorithm with parameter \(d\) and the TT algorithms, respectively. 

\subsection{Vertex-transitive graphs}
\label{subsecRegGraph}

For the first set of simulations, we let \(\graphset_{0}\) be the empty graph and \(\graphset_{1}\) the 2-dimensional toroidal grid on \(n\) vertices.
We set \(n \in \{2500, 10000\}\) and \(k = c = 500\).
The results are based on 1000 simulations, for $\lambda = 1$ and $\beta \in \{1, 10, 100, 1000\}$.
Table~\ref{tableGrid2D} contains the numerical results.

Algorithm~\ref{AlgPermApprox} with the edges-within statistic \(W\) performed significantly better than all other algorithms in terms of Type II error.
Note that the Type II error for Algorithm~\ref{AlgPermApprox} with statistics \(R\) and \(T\) decreases as $\beta$ increases. On the other hand, the TB and TT algorithms are uninformative for both grid sizes, since the thresholds set by the algorithms result in a rejection rule that never rejects the null hypothesis. This emphasizes an important feature of our permutation test, which always provides Type I error control, in contrast to the methods of \cite{milling2015}, which only guarantee that the Type I error will converge to 0 as $n$ grows. Further note that our simulations involve a parameter $\lambda > 0$, which is not covered by the theory of \cite{milling2015}. On the other hand, for large $\beta$, most infected nodes should contract the disease from a neighbor rather than exogenous sources.

\begin{table*}\centering
\ra{1.3}
\begin{tabular}{@{}llllllll@{}}\toprule
Size & Algorithm  & Threshold & Type I Error & \multicolumn{4}{c}{Type II Error by \(\beta\)} \\
\cmidrule{1-8}
&&&
& \(1\) & \(10\) & \(100\) & \(1000\)\\ \midrule
\(n = 2,500\) 
&\(\alg_{2}(W, \; 100, \; 0.01)\)  
& \(220\) & \(0.028\)& \(0.000\) & \(0.000\) & \(0.000\) & \(0.000\) \\
&\(\alg_{2}(R, \; 100, \; 0.01)\)  
& \(45\)& \(0.150\) & \(0.517\) & \(0.000\) & \(0.000\) & \(0.000\) \\
&\(\alg_{2}(T, \; 100, \; 0.01)\)  
&\(1186\)& \(0.009\) & \(0.858\) & \(0.050\) & \(0.000\) & \(0.000\) \\
&\(\alg_{TB}(2)\)  
& \(157\) & \(1.000\) & \(0.000\) & \(0.000\) & \(0.000\) & \(0.000\) \\
&\(\alg_{TT}\)  
& \(5441\) & \(1.000\) & \(0.000\) & \(0.000\) & \(0.000\) & \(0.000\) \\
\(n = 10,000\) 
&\(\alg_{2}(W, \; 100, \; 0.01)\)  
& \(63\) & \(0.026\) & \(0.000\)& \(0.000\)& \(0.000\)& \(0.000\) \\
&\(\alg_{2}(R, \; 100, \; 0.01)\)  
& \(87\) & \(0.001\) & \(1.000\) & \(0.915\) & \(0.000\) & \(0.000\)\\
&\(\alg_{2}(T, \; 100, \; 0.01)\)  
& \(2,509\) & \(0.008\) & \(0.970\) & \(0.206\) & \(0.000\) & \(0.000\)\\
&\(\alg_{TB}(2)\)  
& \(150\)  & \(1.000\) & \(0.000\) & \(0.000\) & \(0.000\) & \(0.000\) \\
&\(\alg_{TT}\)  
& \(5,760\) & \(1.000\) & \(0.000\) & \(0.000\) & \(0.000\) & \(0.000\)\\
\bottomrule
\end{tabular}
\caption{Threshold, Type I error, and Type II error for algorithms on the two-dimensional toroidal grid graph of size \(n\). Here, \(\graphset_{0}\) is the empty graph. We set \(k = c = 500\). The statistic \(W\) performs best by far, and the permutation tests offer more reasonable error bounds.
The Type I and Type II errors are approximated using \(1000\) simulations each.}
\label{tableGrid2D}
\end{table*}


\subsection{Online social network}
\label{appOnlineSocialNetwork}

In order to gauge the performance of our algorithm on more ``irregular'' networks, we performed simulations on real social network graphs with the empty graph as the null. We used two connected components of the Facebook social network from the Stanford Network Analysis Project (SNAP) \cite{snapnets}. The graphs, which we refer to as Facebook 1 and Facebook 2, correspond to users who downloaded the Social Circles app and can be seen in Figure~\ref{figFacebook}.

\begin{figure}[t!]
\centering 
\minipage{0.4\textwidth}
\includegraphics[width=\linewidth]{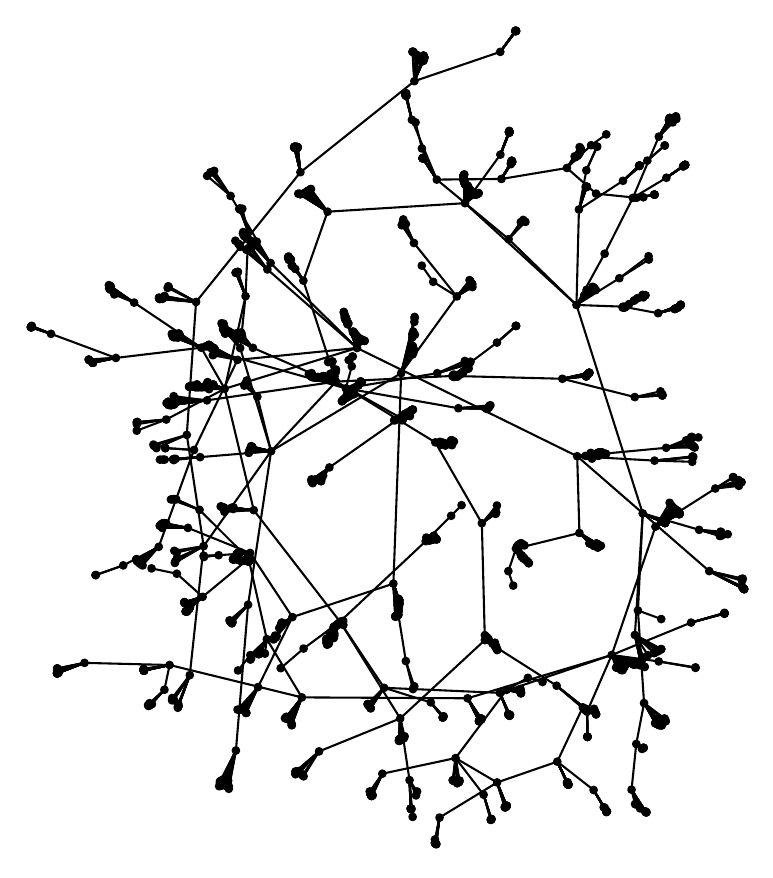}
\endminipage\hfill%
\minipage{0.4\textwidth}
\includegraphics[width=\linewidth]{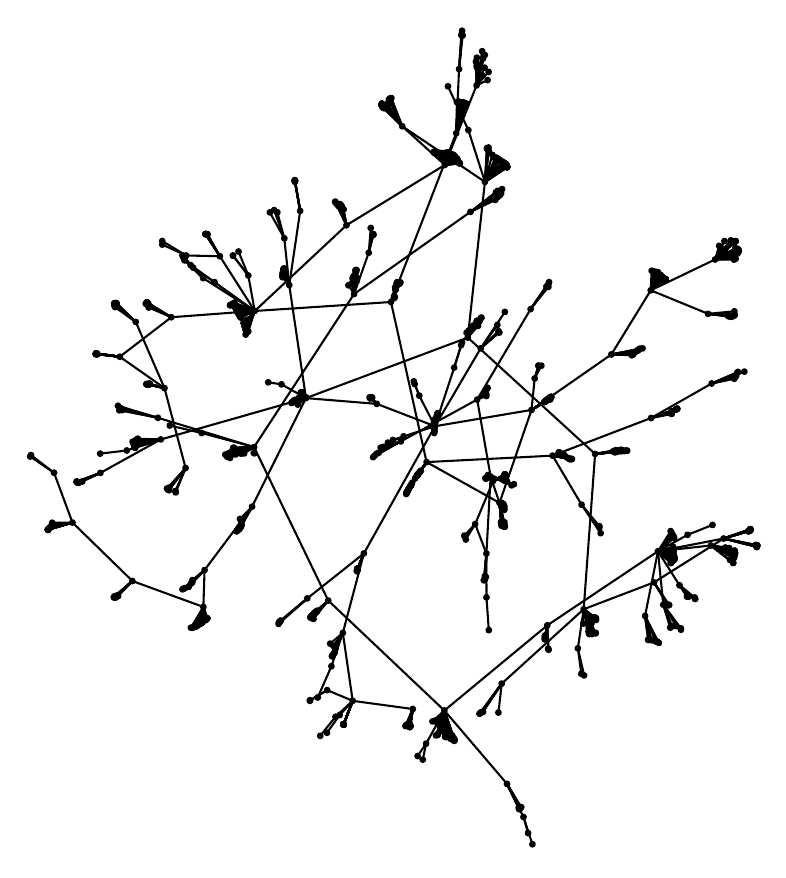}
\endminipage\hfill%
\caption{The Facebook 1 and Facebook 2 graphs, with \(2040\) and \(1567\) nodes. Note that the graphs contain a large proportion of leaves.}
\label{figFacebook}
\end{figure}

We simulated 1000 infections on the graphs with \(k = 200\), \(c = 300\), $\lambda = 1$, and $\beta \in \{1, 10, 100, 1000\}$.  For the TB algorithm, we chose the integer \(d\) providing the smallest threshold. 
Table~\ref{tableSocialNetwork} details the results.
As seen in the table, Algorithm~\ref{AlgPermApprox} with statistic $W$ again performed better than the other algorithms in terms of Type II error. The TB and TT algorithms performed poorly, since the thresholds for all values of \(d\) are too large.

\begin{table*}\centering
\ra{1.3}
\begin{tabular}{@{}llllllll@{}}\toprule
Network & Algorithm  & Threshold & Type I Error & \multicolumn{4}{c}{Type II Error by \(\beta\)} \\
\cmidrule{1-8}
&&&
& \(1\) & \(10\) & \(100\) & \(1000\)\\ \midrule
Facebook 1
&\(\alg_{2}(W, \; 200, \; 0.01)\)  
& \(46\) & \(0.005\)& \(0.242\) & \(0.000\) & \(0.000\) & \(0.000\) \\
&\(\alg_{2}(R, \; 200, \; 0.01)\)  
& \(11\) & \(0.018\)& \(0.958\) & \(0.798\) & \(0.307\) & \(0.062\) \\
&\(\alg_{2}(T, \; 200, \; 0.01)\)  
& \(273\) & \(0.004\)& \(0.700\) & \(0.000\) & \(0.000\) & \(0.000\) \\
&\(\alg_{TB}(3)\)  
& \(77\) & \(1.000\) & \(0.000\) & \(0.000\) & \(0.000\) & \(0.000\) \\
&\(\alg_{TT}\)  
& \(1964\) & \(1.000\) & \(0.000\) & \(0.000\) & \(0.000\) & \(0.000\) \\
\\
Facebook 2 
&\(\alg_{2}(W, \; 100, \; 0.01)\)  
& \(51\) & \(0.017\)& \(0.048\) & \(0.000\) & \(0.000\) & \(0.001\) \\
&\(\alg_{2}(R, \; 100, \; 0.01)\)  
& \(10\) & \(0.007\)& \(0.987\) & \(0.830\) & \(0.339\) & \(0.091\) \\
&\(\alg_{2}(T, \; 100, \; 0.01)\)  
& \(249\) & \(0.029 \)& \(0.290\) & \(0.000\) & \(0.000\) & \(0.000\) \\
&\(\alg_{TB}(3)\) 
& \(78\) & \(1.000\) & \(0.000\) & \(0.000\) & \(0.000\) & \(0.000\) \\
&\(\alg_{TT}\)  
& \(1965\) & \(1.000\) & \(0.000\) & \(0.000\) & \(0.000\) & \(0.000\) \\
\bottomrule
\end{tabular}
\caption{Threshold, Type I error, and Type II error for algorithms when \(\graphset_{1}\) is one of the subsets of the Facebook graph. 
Here, we set \(k = 200\) and \(c = 300\). Note that \(\alg_{2}(W, B, \alpha)\) seems to perform better than the other algorithms.}
\label{tableSocialNetwork}
\end{table*}


\subsection{Erd\H{o}s-Renyi versus stochastic block model graphs}
\label{appSBM}

\begin{figure}[!h]
\centering 
\includegraphics[width=4.00in]{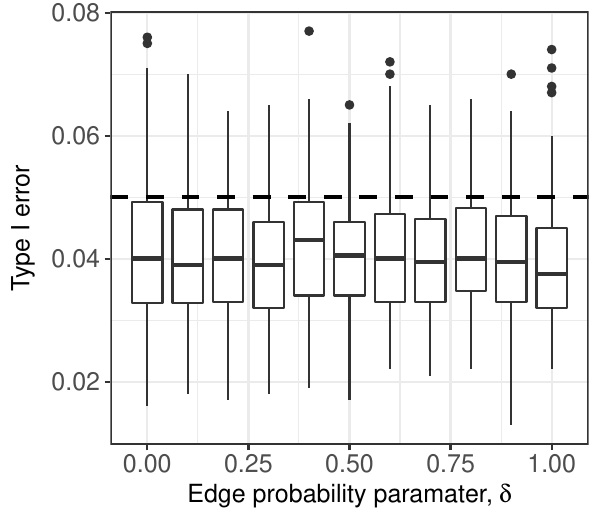}
\caption{The Type I error of the permutation test applied to the test of an Erd\H{o}s-Renyi random graph versus a two-block stochastic block model graph on \(n = 500\) vertices and $k = c = 50$. The edge density is \(p = \log(n)/n\), and the within-block connectivity is \(a = 2 \delta p\) and $\beta = 1$.
On average, the permutation test controls the Type I error at the nominal level \(\alpha = 0.05\).}
\label{figSBMTypeI}
\end{figure}

For our first set of random graph simulations, we considered an infection spreading on an Erd\H{o}s-Renyi graph, versus a two-block stochastic block model with equal-sized partitions, where \(n = 500\). We let \(k = c = 50\).
In the null graph, edges exist independently with probability \(p = \log(n) / n\).
In the alternative graph, the probability that an edge exists between nodes in the same partition is \(a\), and the probability that an edge exists between nodes in different partitions is \(b\), where \(a/2 + b/2 = p\).
We let \(a = 2 \delta p\) and varied \(\delta\) in \([0, 1]\) in increments of \(0.1\).
To estimate thresholds and error probabilities, we sampled each infection process \(1000\) times. We drew \(100\) pairs of random graphs for each \(\delta\).

Lemma~\ref{lemmaRandomGraphs} predicts the validity of the permutation test in controlling Type I error. Furthermore, we expect the Type II error to decrease as the infections become sufficiently dissimilar through increasing \(\beta\) or \(\delta\).
The first of these conclusions is verified by examining Figure~\ref{figSBMTypeI}: As \(\delta\) varies, the permutation test controls the Type I error at a level \(\alpha = 0.05\) on average, and with reasonably high probability. Furthermore, even outliers had Type I errors of no more than \(0.08\).

\begin{figure}[!h]
\centering 
\includegraphics[width=4.00in]{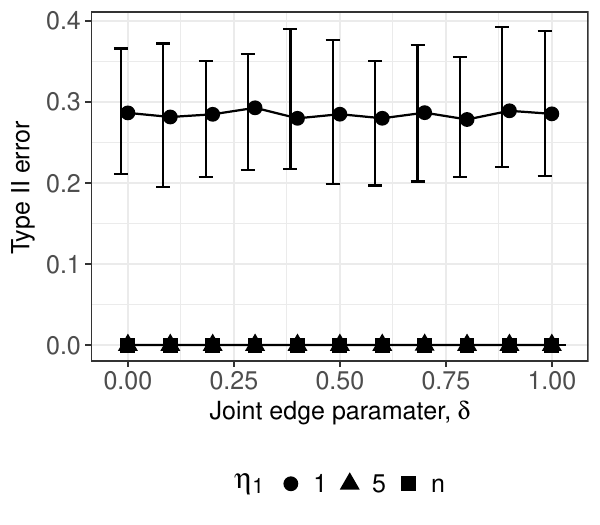}
\caption{The Type II error of the permutation test applied to the test of an Erd\H{o}s-Renyi random graph versus a two-block stochastic block model graph on \(n = 500\) vertices and $k = c = 50$. The edge density is \(p = \log(n)/n\), and the within-block connectivity is \(a = 2 \delta p\).
The markers denote the average error, and the bars denote the range over the \(100\) simulated graphs.
Perhaps surprisingly, the Type II error does not seem to depend on \(\delta\).
However, the Type II error decreases with \(\beta_{1}\).}
\label{figSBMEmptyTypeII}
\end{figure}

\begin{figure}[!h]
\centering 
\includegraphics[width=4.00in]{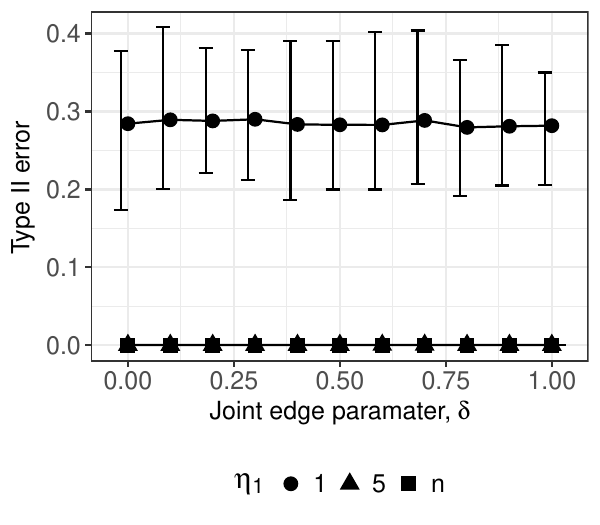}
\caption{The Type II error simulating the distribution of \(\infected\) on an Erd\H{o}s-Renyi \(\graphset_{0}\) with parameters \(\beta_{0} = n = 500\) against a two-block stochastic block model \(\graphset_{1}\).
Here, \(a = 2 \delta p\) and \(b = 2p - a\) are the within- and between-block edge probabilities.
Additionally, we set \(k = c = 50\).
The marker indicates the average value across \(100\) graph realizations, and the bars show the range of Type II errors.
Perhaps surprisingly, this graph is very similar to Figure~\ref{figSBMEmptyTypeII}, both in terms of the average Type II error and range of values.}
\label{figSBMNullTypeII}
\end{figure}

To examine the Type II error, we provide two plots.
First, Figure~\ref{figSBMEmptyTypeII} shows the average Type II error for various levels of \(\beta_{1}\) and \(\delta\), when \(\beta_{0} = 0\).
The marker indicates the average value across the \(100\) graph realizations, and the bars show the range of Type II errors.
Additionally, the bars near each point indicate the range of the Type II errors across simulated graphs.
Note that the Type II error is low for even modest values of \(\beta_{1}\). The second plot is Figure~\ref{figSBMNullTypeII}.
Here, the mean Type II error is plotted for various levels of \(\beta_{1}\) and \(\delta\), where \(\beta_{0} = n\).
Additionally, the bars near each point indicate the range of Type II errors over simulated graphs.
Note that Figures~\ref{figSBMEmptyTypeII} and \ref{figSBMNullTypeII} are very similar.
Thus, even strong spreading on the Erd\H{o}s-Renyi random graph does not seem to affect Type II error both in terms of the average and the range of values.


\subsection{Correlated Erd\H{o}s-Renyi random graphs}
\label{appCER}

We now consider a case where the permutation test performs poorly, by examining correlated Erd\H{o}s-Renyi random graphs.
We took $\graphset_0$ to be an Erd\H{o}s-Renyi graph on \(n = 500\) nodes, with \(p = \log(n) / n\). We set the infection parameters to be \(k = c = 50\).
The alternative graph \(\graphset_{1}\) was also an Erd\H{o}s-Renyi random graph, with edges drawn independently with probability \(p\).
However, the existence of an edge \((u, v)\) in \(\graphset_{0}\) and \(\graphset_{1}\) was correlated, so \(\gamma p\) was the probability of \((u, v)\) existing in both graphs, where \(\gamma\) in \([0, 1]\) was taken in increments of \(0.1\). Note that \(\gamma = p\) corresponds to independent draws. To estimate thresholds and error probabilities for each graph, we sampled infection processes \(1000\) times for each set of parameters.
Finally, we drew \(100\) pairs of graphs \((\graphset_{0}, \graphset_{1})\).

Due to correlations in edge appearance probabilities in $\graphset_0$ and $\graphset_1$, our theory for permutation testing no longer applies. Intuitively, we expect that as \(\gamma\) increases, the test loses its Type I error control: For larger values of \(\gamma\), the edges of \(\graphset_{0}\) and \(\graphset_{1}\) are more positively correlated, which increases the probability that the vertices of an edge \((u, v)\) in \(\edgeset_{1}\) are infected under the null distribution. Thus, the permutation test sets the threshold for the test using \(W_{1}\) to be too low, causing a large Type I error. Indeed, this behavior is seen in Figure~\ref{figCERTypeI}, where
we see that the nominal Type I error level \(\alpha = 0.05\) is far from achieved for every realization with \(\gamma \geq 0.2\).

\begin{figure}[!h]
\centering 
\includegraphics[width=4.00in]{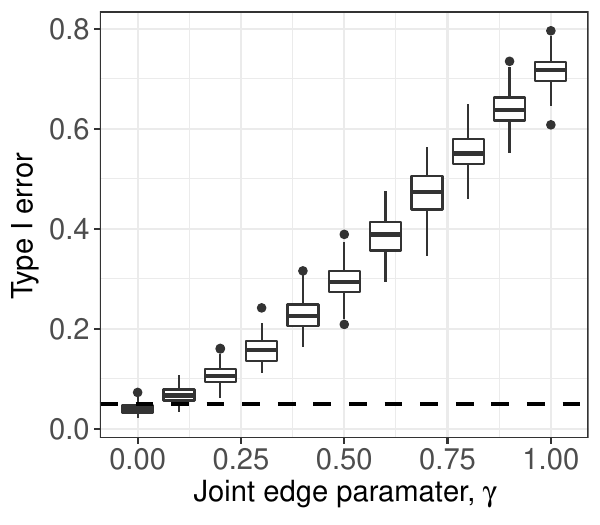}
\caption{The Type I error of the permutation test applied to correlated Erd\H{o}s-Renyi random graphs on \(n = 500\) vertices. The marginal probability of an edge \((u, v)\) is \(p = \log(n)/n\) in both the null and alternative, and the probability of the edge in both graphs is \(\gamma p\).
As \(\gamma\) increases, the edge correlation increases, causing the Type I error to increase. Here, \(\beta = 1\) and \(k = c = 50\).}
\label{figCERTypeI}
\end{figure}

\begin{figure}[!h]
\centering 
\includegraphics[width=4.00in]{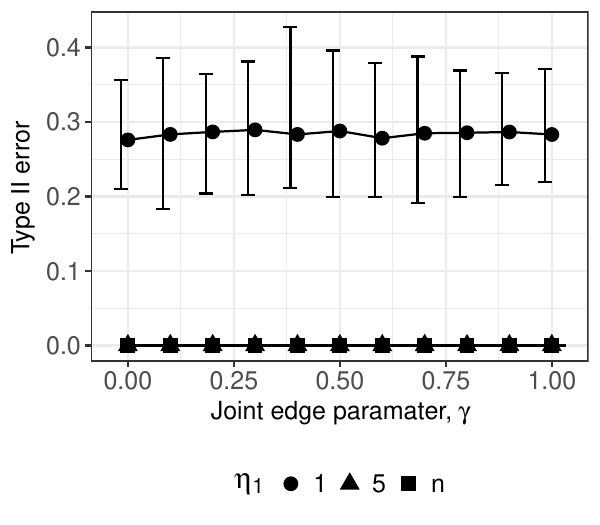}
\caption{The Type II error
of the permutation test applied to correlated Erd\H{o}s-Renyi random graphs on \(n = 500\) vertices. The marginal probability of an edge \((u, v)\) is \(p = \log(n)/n\) in both the null and alternative, and the probability of the edge in both graphs is \(\gamma p\).
The marker indicates the average Type II error, and the bars indicate the range.
Perhaps surprisingly, Figure~\ref{figCEREmptyTypeII} resembles Figures~\ref{figSBMEmptyTypeII} and \ref{figSBMNullTypeII}, despite the difference in graph structures.
}
\label{figCEREmptyTypeII}
\end{figure}

The behavior of the Type II error is more nuanced in this case.
In particular, for high values of \(\gamma\), we
would still expect the Type II error to be roughly on the same order as in the case of the stochastic block model.
However, if we were to set the threshold using the actual null distribution, meaning the correlated random graph \(\graphset_{0}\),
perhaps the threshold would be sufficiently elevated by the correlated edges, which would in turn increase the Type II error.
We do indeed see these behaviors in Figures~\ref{figCEREmptyTypeII} and \ref{figCERNullTypeII}.
In the first figure, the Type II error does not depend on \(\gamma\), since \(\graphset_{1}\) is always an Erd\H{o}s-Renyi random graph with parameter \(p\).
In the second figure, we see that the Type II error increases drastically when considering a threshold set by simulating on \(\graphset_{0}\).
However, note that for large values of \(\beta_{1}\), the graphs can still be distinguished even with moderately large correlation.
This provides hope that in correlated cases, infection patterns that depend very strongly on the graph topology could still yield correct inference for the true graph.

\begin{figure}[!h]
\centering 
\includegraphics[width=4.00in]{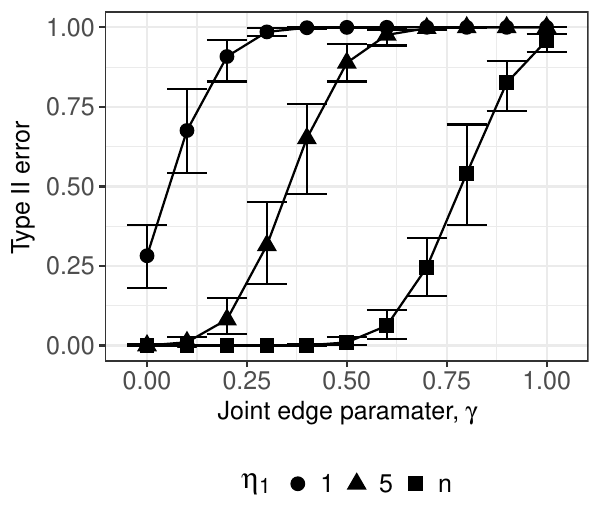}
\caption{The Type II error between a pair of correlated Erd\H{o}s-Renyi random graphs \(\graphset_{0}\) and \(\graphset_{1}\) when \(\beta_{0} = n = 500\).
The marginal probability of an edge \((u, v)\) in either graph is \(p = \log(n)/n\), and the probability of the edge in both graphs is \(\gamma p\).
Additionally, we set \(k = c = 50\).
The markers denote the average Type II errors, and the bars denote the range.
Note that this figure is very different from Figure~\ref{figCEREmptyTypeII}.
In particular, for  higher values of \(\gamma\), the Type II error is much worse, since the graphs become similar.
However, note that if \(\beta_{1}\) is sufficiently high, the graphs can still be distinguished even for moderately large \(\gamma\).}
\label{figCERNullTypeII}
\end{figure}


\section{Proofs}
\label{appProofs}

In this section, we provide proofs of our main results and the propositions.

\subsection{Proofs of main theorems}
\label{subsec: proofs of main results}

\begin{proof}[Proof of Theorem~\ref{ThmPermStat}]


Note that under $H_0$, and for any set $\mathcal{B}$, we have
\begin{equation*}
\prob_{0}\left(\infected \in \mathcal{B} \right) 
=
\prob_{0}\left(\pi\infected \in \mathcal{B} \right) 
= 
\frac{1}{|\Pi_0|} \sum_{\pi_0 \in \Pi_0} \prob_{0}\left(\pi_0 \infected \in \mathcal{B}  \right), 
\end{equation*}
where the first equality is by assumption and the second is by averaging.
The first equality is proven in detail for our models in Section~\ref{subsecSymmetries}.
In particular,
\begin{align*}
\prob_{0}\left(S(\infected) \le t   \right) 
& = \prob_{0}\left(\infected \in S^{-1}(-\infty, t]  \right)  \\ 
& = \frac{1}{|\Pi_0|} \sum_{\pi_0 \in \Pi_0} \prob_{0} \left(\pi_0 \infected \in S^{-1} (-\infty, t]  \right) \\  
& = \frac{1}{|\Pi_0|} \sum_{\pi_0 \in \Pi_0} \prob_{0}\left(S(\pi_0 \infected) \le t  \right). 
\end{align*}


Now let $\{g_1, \dots, g_m\}$ denote coset representatives for $\Pi_0$ in the permutation group $S_n$, so $\{g_1 \Pi_0, \dots, g_m \Pi_0\}$ partitions $S_n$ and has cardinality
\[
m = \frac{|S_n|}{|\Pi_0|} = \frac{n!}{|\Pi_0|}.
\] 
By assumption, we may choose $g_i \in \Pi_1$ for each $1 \le i \le m$. Since $S$ is a $\Pi_1$-invariant statistic, this means $S(g_i \pi_0 \infected) = S(\pi_0 \infected)$ for each $i$. Hence,
\begin{align*}
\prob_{0}\left(S(\infected) \le t  \right) 
 = 
\frac{1}{|\Pi_0|} \sum_{\pi_0 \in \Pi_0} \frac{1}{m} \sum_{1 \le i \le m} \prob_{0}\left(S(g_i \pi_0 \infected) \le t  \right) 
 = \frac{1}{n!} \sum_{\pi \in S_n} \prob_{0}\left(S(\pi \infected) \le t  \right), 
\end{align*}
implying that $S(\infected)$ and $S(\pi \infected)$ have the same distribution when $\pi \sim \text{Uniform}(S_n)$.
\end{proof}

\begin{proof} [Proof of Theorem~\ref{ThmPermTest}]
By Theorem~\ref{ThmPermStat}, we have
\begin{equation*}
\prob_{0}\left(S(\infected) \le t  \right) 
=  
\frac{1}{n!} \sum_{\pi \in S_n} \prob_{0}\left(S(\pi \infected) \le t   \right), 
\end{equation*}
for each $t \in \mathbb{R}$. 
Conditioning on the infection vector \(\infected\), we obtain
\begin{align*}
\prob_{0}\left(S(\infected) \le t  \right) 
& = 
\sum_{\infected \in \infspace_{k, c}} \prob_{0}\left(\infected\right)
\frac{1}{n!} \sum_{\pi \in S_n} \prob_{0}\left(S(\pi \infected) \le t \; \Big | \;  \infected\right) \\ 
&= 
\frac{1}{n!} \sum_{\infected \in \infspace_{k, c}}  \sum_{\pi \in S_n} 
\prob_{0}\left(\infected\right) \prob_{0}\left(S(\pi \infected) \le t \; \Big | \; \infected\right) .
\end{align*}
Fix some \(J \in \infspace_{k, c}\).
For an infection \(\infected\), let \(\pi_{J, \infected}\) be a permutation mapping \(\infected\) to \(J\).
Then $\infected = \pi_{J, \infected}^{-1} J$, so $\pi \infected = \pi \pi_{J, \infected}^{-1} J$, and
\begin{align*}
\prob_{0}\left(S(\infected) \le t \right) %
& = 
\frac{1}{n!} \sum_{\infected \in \infspace_{k, c}}  \sum_{\pi \in S_n} 
\prob_{0}\left(\infected\right) \prob_{0}\left(S\left(\pi  \pi_{J, \infected}^{-1} J\right) \le t \; \Big | \; \infected\right) \\
&=
\frac{1}{n!} \sum_{\infected \in \infspace_{k, c}} 
\prob_{0}\left(\infected\right) \sum_{\pi' \in S_n} \prob_{0}\left(S(\pi' J) \le t \; \Big | \; \infected\right) \\
&=
\frac{1}{n!} \sum_{\infected \in \infspace_{k, c}} 
\prob_{0}\left(\infected\right) \sum_{\pi' \in S_n} \prob_{0}\left(S(\pi' J) \le t\right),
\end{align*}
where we denote $\pi' = \pi \pi_{J, \infected}^{-1}$. In the last step, the conditioning on $\infected$ becomes irrelevant because we sum over all permutations in $S_n$. Note that
\begin{equation*}
\prob_0(S(\pi' J) \le t) = \ind\left\{S(\pi' J) \le t\right\},
\end{equation*}
since all quantities are deterministic. Hence,
\begin{align*}
\prob_{0}\left(S(\infected) \le t\right) 
&=
\frac{1}{n!} \sum_{\infected \in \infspace_{k, c}}
\prob_{0}\left(\infected\right)  \sum_{\pi' \in S_n}  \ind\left\{S(\pi' J) \le t \right\} 
= 
\frac{1}{n!} \sum_{\pi' \in S_n} 
\ind\left\{S(\pi' J) \le t \right\}.
\end{align*}
This justifies the permutation test described in Algorithm~\ref{AlgPermExact}: The threshold $t_\alpha$ to bound the Type I error at level $\alpha$ may be computed explicitly from computing the appropriate quantile of $S$ with respect to all $n!$ permutations of $J$.
\end{proof}

\begin{proof}[Proof of Theorem~\ref{ThmConverse}]
Let $\scriptO_{\Pi_1}(1)$ denote the orbit of vertex 1 in $\Pi_1 = \Aut(G_1)$, and note that by assumption, $|\scriptO_{\Pi_1}(1)| < n$. Consider the statistic
\begin{equation*}
S(J) = \sum_{v \in \scriptO_{\Pi_1}(1)} 1\{J_v = 1\},
\end{equation*}
which counts the number of infected vertices in $\scriptO_{\Pi_1}(1)$. Note that $S$ is clearly $\Pi_1$-invariant. We claim that $S(\infected)$ and $S(\pi \infected)$ do not have the same distribution under $H_0$, when $\pi \sim \text{Uniform}(S_n)$.

Let $G = \{g_1, \dots, g_a\} \subseteq \Pi_0$ be a set consisting of coset representatives such that $\{\Pi_1 g_1, \dots, \Pi_1 g_a\}$ is a partition of $\Pi_1 \Pi_0$, and let $H = \{h_1, \dots, h_b\} \subseteq S_n \backslash \Pi_0$ be representatives of the remaining cosets of $\Pi_1$ in $S_n$. For any observation vector $J$, we have
\begin{align*}
\prob_0\left(S(\pi \infected) = S(J)\right) & = \frac{1}{n!} \sum_{\pi_1 \in \Pi_1} \sum_{g \in G \cup H} \prob_0 \left(S(\pi_1 g \infected) = S(J)\right) \\
& = \frac{|\Pi_1|}{n!} \sum_{g \in G \cup H} \prob_0 \left(S(g\infected) = S(J)\right) \\
& = \frac{1}{a+b} \sum_{g \in G \cup H} \prob_0 \left(S(g\infected) = S(J)\right),
\end{align*}
where the first equality uses the fact that $\{\Pi_1 g_1, \dots, \Pi_1 g_a\} \cup \{\Pi_1 h_1, \dots, \Pi h_b\}$ is a partition of $S_n$, and the second equality uses the fact that $S$ is $\Pi_1$-invariant.

By symmetry of the spreading process on $\graphset_0$, we have
\begin{equation*}
\prob_0\left(S(g \infected) = S(J)\right) = \prob_0\left(S(\infected) = S(J)\right), \qquad \forall g \in G.
\end{equation*}
Hence,
\begin{equation*}
\prob_0\left(S(\pi \infected) = S(J)\right) = \frac{a}{a+b} \prob_0\left(S(\infected) = S(J)\right) + \frac{1}{a+b} \sum_{h \in H} \prob_0 \left(S(h\infected) = S(J)\right).
\end{equation*}
We will demonstrate a choice of $J$ for which
\begin{equation*}
\sum_{h \in H} \prob_0 \left(S(h\infected) = S(J)\right) \neq b \; \prob_0\left(S(\infected) = S(J)\right),
\end{equation*}
implying that
\begin{equation*}
\prob_0\left(S(\pi \infected) = S(J)\right) \neq \prob_0\left(S(\infected) = S(J)\right),
\end{equation*}
so $S(\pi \infected)$ and $S(\infected)$ cannot have the same distribution.

Let $J \in \infspace_{k,c}$ be a vector such that $S(J) = m$, for some \(m\) to be specified later. 
Then $\prob_0\left(S(\infected) = S(J)\right)$ is the probability that exactly $m$ vertices in $\scriptO_{\Pi_1}(1)$ are infected. On the other hand, for a fixed $g \in S_n$, the quantity $S(g\infected)$ counts the number of infected vertices in $g^{-1}\left(\scriptO_{\Pi_1}(1)\right)$. Again using symmetry of the spreading process on $\graphset_0$, we have
\begin{equation*}
\prob_0\left(S(g \infected) = m\right) = \prob_0\left(S(\infected) = m\right),
\end{equation*}
whenever $1 \in g^{-1} \left(\scriptO_{\Pi_1}(1)\right)$. However, when $1 \notin g^{-1} \left(\scriptO_{\Pi_1}(1)\right)$, we have
\begin{equation*}
\prob_0\left(S(g \infected) = m\right) < \prob_0\left(S(\infected) = m\right),
\end{equation*}
for some \(m\),
since the center of the star is more likely to be infected than any of the leaves. Finally, note that $h_j(1) \notin \scriptO_{\Pi_1}(1)$ for some $j$. Indeed, if $h_i(1) \in \scriptO_{\Pi_1}(1)$ for all $1 \le i \le b$, we would have $\pi_1 h_i(1) \in \scriptO_{\Pi_1}(1)$ for all $\pi_1 \in \Pi_1$, contradicting the fact that the cosets cover the entire space $S_n$. Thus, we have $1 \notin h_j^{-1}\left(\scriptO_{\Pi_1}(1)\right)$, implying that
\begin{equation*}
\prob_0(S(h_j \infected) = m) < \prob_0(S(\infected) = m),
\end{equation*}
and in particular,
\begin{equation*}
\sum_{i=1}^b \prob_0 \left(S(h_i \infected) = S(J)\right) < b \; \prob_0\left(S(\infected) = S(J)\right).
\end{equation*}
This completes the proof.
\end{proof}

\subsection{Proofs of invariances}
\label{subsecSymmetries}

In this appendix, we derive the following key lemma concerning permutation invariance of infection statistics under the graph automorphism group. This lemma is leveraged in the proof of Theorem~\ref{ThmPermStat}. We first outline the proof of the lemma, and then provide the proofs of several supporting lemmas.

\begin{lemma}
Let \(\Pi_0 = \Aut(\graphset_{0})\). For any \(\pi_{0}\) in \(\Pi_{0}\), we have
\[
\prob_{0}(\infected = J)
= 
\prob_{0}(\pi_{0}\infected = J)
=
\frac{1}{|\Pi_{0}|} \sum_{\pi \in \Pi_{0}} \prob_{0}(\pi \infected = J),
\]
under both the stochastic spreading and conditional Ising models.
\label{lemmaSymmetries}
\end{lemma}
\begin{proof}
Note that it suffices to prove the first equality, since the second equality may be obtained by averaging over \(\Pi_{0}\). Further note that once we have proved the equality for uncensored infection vectors, the extension to censored vectors follows immediately from the additive expression for the censoring measure over all possible uncensored configurations.


We begin by focusing on the stochastic spreading model. The probability of an (uncensored) infection vector \(J\) is 
\begin{align*}
& \begin{aligned}
\prob_{0}(\infected = J)
&=
\sum_{P \in \pathset(J)} \prob_{0}(\pathvar = P).
\end{aligned}
\end{align*}
Thus, it suffices to prove that 
\(\prob_{0}(\pathvar = P) = \prob_{0}(\pi_{0} \pathvar = P)\),
for all $P \in \pathset(J)$, where 
\[
\pi_{0} P = \pi_0(P_1, \dots, P_k)
= 
\left(\pi_{0}(P_{1}), \ldots, \pi_{0}(P_{k})\right).
\]
The probability of a path \(P\) is equal to
\begin{align*}
& \begin{aligned}
\prob_{0}(\pathvar = P)
&=
\prod_{t = 1}^{k} 
\frac{1 + \neighbors_{t, \invertex}(P)}{n + 1 - t + \beta \neighbors_{t}(P)}.
\end{aligned}
\end{align*}
Thus, it suffices to show that \(\neighbors_{t, \invertex}(P)\) and \(N_{t}(P)\) are permutation-invariant.
This is done in Lemma~\ref{lemmaNeighborsInvariant} below.

In the case of the Ising model, we have seen that the probability of an infection \(J\) only depends on its Hamiltonian. Lemma~\ref{lemmaEWInvariant} below shows that the Hamiltonian statistic is permutation-invariant.
\end{proof}


We now provide the proofs of the supporting lemmas to Lemma~\ref{lemmaSymmetries}. Note that the following lemmas are all deterministic statements in terms of a fixed graph $\graphset$, with the automorphism group $\Pi := \Aut(\graphset)$; they will be applied with $\graphset = \graphset_0$ and $\graphset_1$ in our arguments in the paper.

\begin{lemma}
The statistics \(\neighbors_{t, \invertex}\) and \(\neighbors_{t}\) are \(\Pi\)-invariant when computed on graph \(\graphset\).
\label{lemmaNeighborsInvariant}
\end{lemma}

\begin{proof}
We have
\begin{align*}
& \begin{aligned}
\neighbors_{t, \invertex}(P)
&=
\sum_{s = 1}^{t - 1} \ind\left\{(P_{s}, P_{t}) \in \edgeset\right\}, \\ 
\neighbors_{t}(P)
&=
\sum_{(u, v) \in \edgeset}
\ind\left\{\text{exactly one of \(u\) or \(v\) is in \((P_{1}, \ldots, P_{t - 1})\)}\right\}.
\end{aligned}
\end{align*}

For \(\pi \in \Pi\), we have
\begin{align*}
& \begin{aligned}
\neighbors_{t, \invertex}(P)
&=
\sum_{s = 1}^{t - 1}
\ind\left\{(P_{s}, P_{t}) \in \pi^{-1} \edgeset\right\} \\
&=
\sum_{s = 1}^{t - 1}
\ind\left\{(\pi(P_{s}), \pi(P_{t})) \in \edgeset\right\} \\
&=
N_{t, \invertex}(\pi P),
\end{aligned}
\end{align*}
where we have used the definition of an automorphism and the closure of groups under inversion for the first equality.
Similarly, we may write
\begin{align*}
& \begin{aligned}
\neighbors_{t}(P)
&=
\sum_{(u, v) \in \pi^{-1} \edgeset} 
\ind\left\{\text{exactly one of \(u\) or \(v\) is in \((P_{1}, \ldots, P_{t - 1})\)}\right\} \\ 
&=
\sum_{(u, v) \in \edgeset} 
\ind\left\{\text{exactly one of \(\pi^{-1}(u)\) or \(\pi^{-1}(v)\) is in \((P_{1}, \ldots, P_{t - 1})\)}\right\} \\ 
&=
\sum_{(u, v) \in  \edgeset} 
\ind\left\{\text{exactly one of \(u\) or \(v\) is in 
\((\pi(P_{1}), \ldots, \pi(P_{t - 1}))\)}\right\} \\ 
&=
N_{t}(\pi P).
\end{aligned}
\end{align*}
\end{proof}

\begin{lemma}
The edges-within statistic \(W(J) = \energy'(J)\) is 
\(\Pi\)-invariant when computed on graph $\graphset$.
\label{lemmaEWInvariant}
\end{lemma}

\begin{proof}
For \(\pi \in \Pi\), we have
\begin{align*}
& \begin{aligned}
W(J)
&=
\sum_{(u, v) \in \edgeset} \ind\left\{J_{u} = J_{v} = 1 \right\} \\ 
&=
\sum_{(u, v) \in \pi^{-1} \edgeset}
\ind\left\{J_{u} = J_{v} = 1 \right\} \\ 
&=
\sum_{(u, v) \in \edgeset}
\ind\left\{J_{\pi^{-1}(u)} = J_{\pi^{-1}(v)} = 1 \right\} \\ 
&=
\sum_{(u, v) \in \edgeset} 
\ind\left\{(\pi J)_{u} = (\pi J)_{v} = 1\right\} \\ 
&=
W(\pi J),
\end{aligned}
\end{align*}
using similar arguments to the proof of Lemma~\ref{lemmaNeighborsInvariant}.
\end{proof}
\subsection{Hunt-Stein}

\begin{proof}[Proof of Theorem~\ref{theoremHuntStein}]
Clearly, $\psi^*$ is $\Pi$-invariant, since for any $\pi' \in \Pi$, we have
\begin{equation*}
\psi^*(\pi' \infected) = \frac{1}{|\Pi|} \sum_{\pi \in \Pi} \varphi^*(\pi \pi' \infected) = \frac{1}{|\Pi|} \sum_{\pi \pi' \in \Pi \pi'} \varphi^*(\pi \pi' \infected) = \frac{1}{|\Pi|} \sum_{\tilde{\pi} \in \Pi} \varphi^*(\tilde{\pi} \infected) = \psi^*(\infected).
\end{equation*}

We now show that $\psi^*$ satisfies the inequality
\begin{equation}
\inf_{\pi \in \Pi} \expect_{\pi \theta} [\varphi^*(\infected)]
\leq 
\expect_{\theta} [\psi^*(\infected)]
\leq 
\sup_{\pi \in \Pi} \expect_{\pi \theta} [\varphi^*(\infected)], \qquad \forall \theta \in \Theta.
\label{eqn823}
\end{equation}
We may write
\begin{equation*}
\E_\theta[\psi^*(\infected)] = \E_\theta\left[\frac{1}{|\Pi|} \sum_{\pi \in \Pi} \varphi^*(\pi \infected)\right] = \frac{1}{|\Pi|} \sum_{\pi \in \Pi} \E_\theta[\varphi^*(\pi \infected)] = \frac{1}{|\Pi|} \sum_{\pi \in \Pi} \E_{\pi \theta} [\varphi^*(\infected)].
\end{equation*}
Furthermore, we clearly have
\[
\inf_{\pi \in \Pi} \expect_{\pi \theta} [\varphi^*(\infected)]
\leq 
\frac{1}{|\Pi|} \sum_{\pi \in \Pi} \expect_{\pi\theta}[\varphi^*(\infected)]
\leq 
\sup_{\pi \in \Pi} \expect_{\pi \theta}[\varphi^*(\infected)].
\]
Combining the two relations gives
inequality~\eqref{eqn823}.

In particular, we have
\[
\expect_{\theta} [\psi^{*}(\infected)]
\leq 
\sup_{\pi \in \Pi} \expect_{\pi \theta} [\varphi^{*}(\infected) ]
\leq 
\alpha,
\text{\hspace{20pt} for all \(\theta\) in \(\Theta_{0}\)},
\]
so $\psi^*$ is a level-$\alpha$ test. Furthermore,
\[
\expect_{\theta} [\psi^{*}(\infected)]
\geq 
\inf_{\pi \in \Pi} \expect_{\pi \theta} [\varphi^{*}(\infected) ],
\text{\hspace{20pt} for all \(\theta\) in \(\Theta_{1}\)},
\]
so \(\psi^{*}\) is maximin.
\end{proof}

\subsection{Likelihood ratio}

\begin{proof}[Proof of Theorem~\ref{theoremLikelihoodRatio}]
We begin by considering the case $c=0$.
The likelihood under \(H_{0}\) is
\begin{align*}
& \begin{aligned}
L(\graphset_{\text{empty}}, \beta; \infected)
&=
\sum_{P \in \pathset(\infected)} \frac{1}{n} \cdots \frac{1}{n - (k - 1)}
=
k! \cdot  \frac{1}{n} \cdots \frac{1}{n - (k - 1)}.
\end{aligned}
\end{align*}
For \(\graphset_{1}\), we have
\begin{align}
\label{EqnUncensored}
& \begin{aligned}
L(\graphset_{1}, \beta; \infected)
&=
\sum_{P \in \pathset(\infected)}
\prod_{t = 1}^{k}
\frac{1 + \beta \neighbors_{t, \invertex}}{(n + 1 - t) + \beta \neighbors_{t}} \\
&=
\sum_{P \in \pathset(\infected)}
\prod_{t = 1}^{k}
(1 + \beta \neighbors_{t, \invertex})
\prod_{t = 1}^{k}
\frac{1}{(n + 1 - t) + \beta \neighbors_{t}},
\end{aligned}
\end{align}
where $\neighbors_{t, \invertex}$ and $\neighbors_t$ are computed with respect to $\graphset_1$. Thus, the likelihood ratio is
\begin{align*}
& \begin{aligned}
R(\graphset_{\text{empty}}, \graphset_{1}, \beta; \infected) 
&=
\frac{1}{k!}
\sum_{P \in \pathset(\infected)}
\left(\prod_{t = 1}^{k}
(1 + \beta \neighbors_{t, \invertex})\right)
\left(
\prod_{t = 1}^{k}
\frac{n + 1 - t}{(n + 1 - t) + \beta \neighbors_{t}}\right) \\
& :=
\frac{1}{k!}
\sum_{P \in \pathset(\infected)}
A(P) B(P).
\end{aligned}
\end{align*}
We analyze each of the products \(A(P)\) and \(B(P)\) separately.
We have the upper bound
\begin{align*}
& \begin{aligned}
A(P)
&=
\exp\left(\sum_{t = 1}^{k} \log(1 + \beta \neighbors_{t, \invertex})\right) \\ 
&\leq 
\exp\left( \beta \sum_{t = 1}^{k} \neighbors_{t, \invertex}\right) \\ 
&=
\exp\left(\beta W_{1}\right) \\ 
&= 
1 + \beta W_{1} + O(\beta^{2} W_{1}^{2}),
\end{aligned}
\end{align*}
where we have used the fact that $\sum_{t=1}^k \neighbors_{t, \invertex} = W_1$ when $\neighbors_{t, \invertex}$ is computed along any possible infection path.
Furthermore, we have the lower bound
\begin{align*}
& \begin{aligned}
A(P)
&\geq 
\exp\left(\sum_{t = 1}^{k} \beta \neighbors_{t, \invertex} - \frac{1}{2} \beta^{2} \neighbors_{t, \invertex}^{2}\right) \\ 
&=
\exp\left(\beta W_{1} -\frac{1}{2} \beta^{2} \sum_{t = 1}^{k} \neighbors_{t, \invertex}^{2}\right) \\ 
&\geq 
1 + \beta W_{1} - \frac{1}{2} \beta^{2} \sum_{t = 1}^{k} \neighbors_{t, \invertex}^{2} \\
&=
1 + \beta W_{1} - O(\beta^{2} W_{1}^{2}).
\end{aligned}
\end{align*}

Turning to $B(P)$, observe that we have the trivial upper bound \(B(P) \leq 1\).
For a lower bound, we write
\begin{align*}
& \begin{aligned}
B(P)
&=
\exp\left(\sum_{t = 1}^{k} \log(n + 1 - t) - \log(n + 1 - t + \beta \neighbors_{t}) \right) \\ 
&=
\exp\left(
-\sum_{t = 1}^{k} \int_{n + 1 - t}^{n + 1 - t + \beta \neighbors_{t}} \frac{1}{x} dx 
\right) \\ 
&\geq 
\exp\left(-\sum_{t = 1}^{k} \frac{\beta \neighbors_{t}}{n + 1 - t}\right) \\ 
&\geq 
1 -\beta  \sum_{t = 1}^{k} \frac{\neighbors_{t}}{n + 1 - t} \\ 
&=
1 - \beta Q(P).
\end{aligned}
\end{align*}
Altogether, we have
\begin{align*}
& \begin{aligned}
R(\graphset_{\text{empty}}, \graphset_{1}, \beta; \infected) 
&=
\frac{1}{k!} \sum_{P \in \pathset(\infected)}
\Big(1 + \beta W_{1} + O(\beta^{2} W_{1}^{2}) \Big)
\Big(1 - O(\beta Q(P))\Big) \\
&=
\Big(1 + \beta W_{1} + O(\beta^{2} W_{1}^{2})\Big)
\left(1 - \frac{1}{k!}\sum_{P \in \pathset(\infected)} 
O(\beta Q(P))\right),
\end{aligned}
\end{align*}
using the fact that $|\pathset(\infected)| = k!$. 

We now turn to the case when \(c > 0\). Recall that
\begin{equation*}
L(\graphset_\emptytext, \beta; \infected) = \frac{1}{\binom{n}{k \; \; c}}
\end{equation*}
and
\begin{equation*}
L(\graphset_1, \beta; \infected) = \frac{1}{\mu(\infspace_{k,c}; \beta)} \sum_{\infected' \in \uncensored(\infected)} \frac{1}{\binom{n}{c}} L^u(\graphset_1, \beta; \infected'),
\end{equation*}
where $L^u$ denotes the likelihood~\eqref{EqnUncensored} computed without censoring.
Let \(\uncensored(\infected, c')\) denote the subset of \(\uncensored(\infected)\) where each infection has exactly \(k + c'\) infected vertices (i.e, $c'$ of the infected vertices were censored).
Then $\uncensored(\infected) = \bigcup_{c'=0}^c \uncensored(\infected, c')$, and we may write
\begin{align*}
& \begin{aligned}
R(\graphset_{\emptytext}, \graphset_{1}, \beta; \infected)
&=
\frac{\binom{n}{k \;\; c}}{\mu(\infspace_{k, c}; \beta) }
\cdot \frac{1}{\binom{n}{c}}
\sum_{c' = 0}^{c} \sum_{\infected' \in \uncensored(\infected, c')}
\sum_{P \in \pathset(\infected')} \\ 
&\qquad
\left(\prod_{t = 1}^{k + c'} (1 + \beta \neighbors_{t, \invertex})\right)
\left(\prod_{t = 1}^{k + c'} \frac{n + 1 - t}{n + 1 - t + \beta \neighbors_{t}}\right)
\frac{(n - k - c')!}{n!} .
\end{aligned}
\end{align*}
Again, denote the products inside the sums by \(A(P)\) and \(B(P)\), respectively.
Since $\infected'$ is an uncensored vector, the same argument as before gives
\begin{align*}
& \begin{aligned}
1 + \beta W_{1}(\infected') - O(\beta^{2} W_{1}(\infected'))
&\leq  A(P)
\leq 
1 + \beta W_{1}(\infected') + O\left(\beta^{2} W_{1}(\infected')^{2}\right), \\ 
1 - \beta Q(P)
&\leq 
B(P)
\leq 1.
\end{aligned}
\end{align*}
Hence, the likelihood ratio may be written as
\begin{align*}
& \begin{aligned}
R(\graphset_{\emptytext}, \graphset_{1}, \beta; \infected)
&=
\frac{\binom{n}{k \;\; c}}{\mu(\infspace_{k, c}; \beta) }
\cdot \frac{1}{\binom{n}{c}}
\sum_{\infected' \in \uncensored(\infected)}
\sum_{P \in \pathset(\infected')} \\ 
&\qquad 
\Big(1 + \beta W_{1} + O(\beta^{2} W_{1}(\infected')^{2})\Big)
\Big(1 - O(\beta Q(P))\Big) \frac{(n - |\infected'|)!}{n!} \\ 
&=
\frac{\binom{n}{k \;\; c}}{\mu(\infspace_{k, c}; \beta) }
\cdot \frac{1}{\binom{n}{c}}
\sum_{\infected' \in \uncensored(\infected)}
 \\
&\qquad \frac{1}{\binom{n}{|\infected'|}}
\left(1 + \beta W_{1} + O(\beta^{2} W_{1}(\infected')^{2})\right)
\left(1 - \frac{1}{|\infected'|!} \sum_{P \in \pathset(\infected')}O(\beta Q(P))\right) \\ 
&=
\sum_{\infected' \in \uncensored(\infected)}
D(\beta, k, c, |\infected'|)
\left(1 + \beta W_{1}(\infected') + 
 O(\beta^{2} W_{1}(\infected')^{2})\right) \\ 
 & \qquad \times
\left(1 - \frac{1}{|\infected'|!} \sum_{P \in \pathset(\infected')}O(\beta Q(P))\right).
\end{aligned}
\end{align*}
This completes the proof.
\end{proof}

\begin{proof}[Proof of Theorem~\ref{theoremConstantLikelihood}]
Let \(\infected\) and \(\infected'\) be two infections in \(\infspace_{k, c}\).
Then, there is some \(\pi\) in \(S_{n}\) such that \(\infected = \pi \infected'\).
Thus, we have
\begin{align*}
& \begin{aligned}
L(\theta; \infected)
&=
L(\theta; \pi \infected')
=
L(\pi \pi^{-1} \theta; \pi \infected')
=
L(\theta'; \infected'),
\end{aligned}
\end{align*}
where \(\theta' = \pi^{-1} \theta\). 
Note that \(\theta'\) is an element of \(\Theta_{0}\) by assumption.
Thus, we conclude
\[
\sup_{\theta \in \Theta_{0}} L(\theta, \infected)
=
\sup_{\theta' \in \Theta_{0}} L(\theta', \infected').
\]
The rest of the proof follows immediately from the discussion preceding the theorem.
\end{proof}


\subsection{Risk bounds when $\graphset_0$ is the star}
\label{subsec: proofs for examples}

\begin{proof}[Proof of Proposition~\ref{PropStarRisk}]
By Corollary~\ref{CorRisk}, it suffices to compute the risk bound when the null hypothesis corresponds to the empty graph.

Let $t_\alpha$ denote the rejection threshold of the permutation test. Since $t_\alpha$ is defined to be the $\alpha$-quantile of the edges-within statistic under the null hypothesis, we have
\begin{equation*}
\prob_0(W_{1} \ge t_\alpha) \le \alpha.
\end{equation*}
Thus, it remains to bound the Type II error. Our proof uses Lemma~\ref{LemBdDiff} to derive concentration of $W_{1}(\infected)$ to $\E[W_{1}(\infected)]$. Note that under both $H_0$ and $H_1$, we may apply Lemma~\ref{LemBdDiff} with $X_i$ equal to the identity of the $i^{\text{th}}$ uncensored infected node and $f(X_1, \dots, X_k) = W_{1}(\infected)$. Since each node is involved in at most $D$ edges, we may take and $c_i = D$ for all $1 \le i \le k$. This leads to the following concentration bounds, which hold for all $t > 0$:
\begin{align}
& \begin{aligned}
\label{EqnTaConc}
\prob\left(W_{1}(\infected) - \E[W_{1}(\infected)] \ge t\right) &\le \exp\left(-\frac{2t^2}{kD^2}\right), \\ 
\prob\left(W_{1}(\infected) - \E[W_{1}(\infected)] \le -t\right) &\le \exp\left(-\frac{2t^2}{kD^2}\right).
\end{aligned}
\end{align}

We begin with the following lemma:

\begin{lemma}
\label{LemTa}
The rejection threshold satisfies the bound
\begin{equation*}
t_{\alpha} 
\leq 
\frac{Dk(k - 1)}{2 (n - 1)} + \sqrt{\frac{kD^2}{2} \log \frac{1}{\alpha} }.
\end{equation*}
\end{lemma}

\begin{proof}
We first compute $\expect_0[W(\infected)]$. Let $V_i$ denote the $i^\text{th}$ uncensored vertex that is infected. We may write
\begin{align*}
  &\begin{aligned}
\expect_0[W(\infected)] 
&= 
\expect\left[\frac{1}{2} \sum_{i = 1}^{k} \sum_{j = 1}^{k}\ind\{(V_{i}, V_{j}) \in \edgeset_{1}\}\right]  \\ 
&= \frac{k(k-1)}{2} \cdot \expect\left[\ind\{(V_1, V_2) \in \edgeset_1\} \right] \\
&= \frac{k(k-1)}{2} \cdot \frac{|\edgeset_1|}{\binom{n}{2}} \\ 
&= |\edgeset_1| \frac{k(k-1)}{n(n-1)} \\ 
&=
\frac{D k(k - 1)}{2(n - 1)}
  \end{aligned}
\end{align*}

Note that the last line uses the simple equality \(|\edgeset_{1}| = Dn/2\).
Applying the bound~\eqref{EqnTaConc} with 
\[
t = \sqrt{\frac{kD^2}{2} \log\frac{1}{\alpha}},
\] 
we then have
\begin{equation*}
\prob_0\left(W(\infected) 
\ge
\frac{Dk(k-1)}{2(n-1)} + \sqrt{\frac{kD^2}{2} \log \frac{1}{\alpha}}\right) \le \alpha,
\end{equation*}
implying the desired result.
\end{proof}
We now derive a lower bound for $\E_1[W(\infected)]$. We have the following result:

\begin{lemma}
\label{LemE1W}
Let $\graphset_0$ be a vertex-transitive graph with degree $D$. Then we have the bound
\begin{equation*}
\E_1[W(\infected)] 
\ge
\frac{D}{2} C_k H(\beta).
\end{equation*}
\end{lemma}
\noindent The proof of Lemma~\ref{LemE1W} is fairly technical and is contained in Appendix~\ref{AppLemE1W}.

Combining the result of Lemma~\ref{LemE1W} with the concentration bound~\eqref{EqnTaConc}, we then have
\begin{align}
\label{EqnE1W}
\prob_1(W_{1}(\infected) < t_\alpha) 
& \le 
\prob_1\left(W(\infected) - \E[W(\infected)] < t_\alpha - \frac{D}{2} C_k H(\beta) \right) \notag \\
& \le 
\exp\left\{-\frac{2}{kD^2} \left(\frac{D}{2} C_k H(\beta) - t_\alpha\right)^2 \right\}.
\end{align}
Finally, substituting the bound on $t_\alpha$ from Lemma~\ref{LemTa} yields the required inequality.
\end{proof}


\begin{proof} [Proof of Proposition~\ref{PropStarRiskMult}]
Since the proof parallels the argument in Proposition~\ref{PropStarRisk}, we only highlight the necessary modifications. In particular, inequalities~\eqref{EqnTaConc} may be replaced by the following concentration bounds:
\begin{align}
& \begin{aligned}
\label{EqnTaConcMult}
\prob\left(\Wbar - \E[\Wbar] \ge t\right) &\le \exp\left(-\frac{2mt^2}{kD^2}\right), \\ 
\prob\left(\Wbar - \E[\Wbar] \le -t\right) &\le \exp\left(-\frac{2mt^2}{kD^2}\right).
\end{aligned}
\end{align}
This is due to the fact that we may apply Lemma~\ref{LemBdDiff} to the variables 
$\{X_{\ell,i}\}$, 
where $X_{\ell, i}$ denotes the identity of the $i^{\text{th}}$ uncensored infected node in the $\ell^{\text{th}}$ spreading process, and $M_{Dm} = \Wbar - \expect[\Wbar]$. We may take $c_{\ell,i} = D/m$ for all $(\ell,i)$.

The bound in Lemma~\ref{LemTa} may then be replaced by the following bound on the rejection threshold:
\begin{equation*}
t_\alpha 
\le
\frac{Dk(k-1)}{2(n-1)} + \sqrt{\frac{kD^2}{2m} \log \frac{1}{\alpha}}.
\end{equation*}

Similarly, although Lemma~\ref{LemE1W} remains unchanged, the bound~\eqref{EqnE1W} will be modified with an additional factor of $m$ appearing in the numerator of the exponent.
\end{proof}

\subsection{Risk bounds when $\graphset_1$ is the star}
\label{SecStarRisk}

\begin{proof}[Proof of Proposition~\ref{prop: star center}]
We first derive the maximum likelihood estimator. The likelihoods may be written as
\begin{align*}
  &\begin{aligned}
L_{i}(\beta; \infected) 
&= 
\prob_{i}\left(\infected_{1}\right) 
\prob_{i}\left(\infected | \infected_{1}\right).
\end{aligned}
\end{align*}
Note that we have the equality
\begin{align*}
  &\begin{aligned}
\prob_{0}\left(\infected | \infected_{1}\right) 
&= 
\prob_{1}\left(\infected | \infected_{1}\right),
  \end{aligned}
\end{align*} 
since under both hypotheses, given the infection status of vertex 1, all status assignments of the remaining nodes are equally likely. Hence, the MLE reduces to comparing $\prob_0(\infected_1)$ and $\prob_1(\infected_1)$.

We have
\begin{equation*}
\prob_0(\infected_1 = 1) = \frac{k}{n}, 
\quad \prob_0(\infected_1 = 0) = \frac{n-k}{n},
\end{equation*}
whereas
\begin{equation*}
\prob_1(\infected_1 = 1) > \frac{k}{n}, 
\quad \prob_1(\infected_1 = 0) < \frac{n-k}{n},
\end{equation*}
since the center of the star is more likely to be infected relative to the leaves. Hence, the test that rejects $H_0$ according to the center indicator statistic $\ind\{\infected_{1} = 1\}$ is indeed a maximum likelihood estimator. 
Note that when $\infected_1 = \star$, we may make an arbitrary decision, so we decide to default to $H_0$ in that case.
Finally, observe that the Type I error is controlled by $\alpha$ when $\alpha \ge k/n$.
\end{proof}

\begin{proof}[Proof of Proposition~\ref{prop: star risk}]

We begin with the following lemma, proved in Appendix~\ref{AppLemStar}:
\begin{lemma}
\label{LemStar}
Under the hypothesis that the graph $\graphset_1$ is a star, we have the bounds
\begin{equation*}
\prob_1(\infected_1 = 0) 
\ge 
\exp\left(- \frac{k + \beta k(k - 1) / 2}{n - k}
\right),
\end{equation*}
and
\begin{equation*}
\prob_1(\infected_1 = 0) 
\le
\begin{cases}
\exp\left(- \frac{k + \beta k (k - 1) / 2}{(n - k + 1) + (k-1) \beta}
\right), & \text{if } \beta \ge 1, \\
\exp\left(- \frac{k + \beta k (k - 1) / 2}{n}
\right), & \text{if } \beta < 1.
\end{cases}
\end{equation*}
\end{lemma}

Returning to the proof of the proposition, note that
\begin{align*}
R_{k, 0}(C, \beta) 
&= 
\prob_0(\infected_1 = 1) + \prob_1(\infected_1 = 0)
= 
\frac{k}{n} + \prob_1(\infected_1 = 0).
\end{align*}
Applying the bounds in Lemma~\ref{LemStar} then implies the desired result.
\end{proof}


\begin{proof} [Proof of Proposition~\ref{PropMultRisk}]
By the analog of Corollary~\ref{CorRisk} for multiple spreading processes, it suffices to consider the risk when $\graphset_0$ is the empty graph. 
Let \(t_{\alpha}\) be the level \(\alpha\) threshold.
We wish to bound
\begin{align*}
  &\begin{aligned}
R_{k, 0}(P_{\Cbar, \alpha}, \beta) 
&= 
\prob_{0}\left( \Cbar \geq t_{\alpha}\right)
+ \prob_{1}\left(\Cbar < t_{\alpha}\right)
\leq 
\alpha 
+ \prob_{1}\left(\Cbar < t_{\alpha}\right).
  \end{aligned}
\end{align*}
To bound the Type II error, it suffices to pick any threshold \(t'_{\alpha}\) such that 
\begin{equation}
\prob_{0}\left( \Cbar \geq t'_{\alpha}\right)
\leq 
\alpha.
\label{eqn: alt threshold}
\end{equation}
By definition, this guarantees that \(t_{\alpha} \leq t'_{\alpha}\),
and as a consequence,
\[
\prob_{1}\left(\Cbar < t_{\alpha}\right)
\leq 
\prob_{1}\left(\Cbar < t'_{\alpha}\right).
\]
Accordingly, let
\[
t'_{\alpha}
=
\frac{k}{n}
+ \sqrt{\frac{1}{2m} \log \frac{1}{\alpha}}.
\]
By applying Hoeffding's inequality, we see that \(t'_{\alpha}\) satisfies inequality~\eqref{eqn: alt threshold}.

We will apply Hoeffding's inequality again to bound $\prob_1(\Cbar < t_\alpha')$.
Note that
\begin{equation*}
\expect_{1}[\Cbar]
=
\prob_1(\infected_1 = 1) 
\ge 
p_{k,0}(\beta),
\end{equation*}
by Lemma~\ref{LemStar}.
It follows that
\begin{align*}
\prob_1(\Cbar < t'_\alpha) 
& \le \prob_1\Big(\Cbar - \expect_{1}[\Cbar] \leq t_\alpha - p_{k, 0}(\beta)\Big) \\
& \le 
\exp\left(-2m\left(\frac{k}{n} + \sqrt{\frac{1}{2m} \log \frac{1}{\alpha}} - p_{k,0}(\beta)\right)^2\right),
\end{align*}
implying the desired result.
\end{proof}

\section{Proofs of corollaries}
\label{AppCors}

In this section, we provide proofs of all corollaries stated in the main text.


\subsection{Proof of Corollary~\ref{CorRisk}}
\label{AppCorRisk}

Let $R_0$ and $R_0'$ denote the risks under null hypotheses $H_0$ and $H_0'$, respectively, and let $\scriptA$ denote the rejection region of the test statistic. We have
\begin{align*}
R_0 & = \prob_0(S(\infected) \in \scriptA) + \prob_1(S(\infected) \notin \scriptA), \quad \text{and} \\
R_0' & = \prob'_0(S(\infected) \in \scriptA) + \prob_1(S(\infected) \notin \scriptA),
\end{align*}
where $\prob'_0$ denotes the probability distribution under $H_0'$.

Note that $\Pi_1 \Pi_0 = S_n$ by assumption, and also $\Pi_1 \Pi_0' = S_n$, since $\Pi_0' = S_n$. By Theorem~\ref{ThmPermStat}, we then have
\begin{equation*}
\prob_0(S(\infected) \in \scriptA) = \frac{1}{n!} \sum_{\pi \in S_n} \prob\left(S(\pi J) \in \scriptA\right) = \prob_0'(S(\infected) \in \scriptA),
\end{equation*}
for any fixed infection vector $J \in \infspace_{k,c}$. It follows that $R_0 = R_0'$, as claimed.

\subsection{Proof of Corollary~\ref{cor: vertex transitive}}
\label{AppTransitive}
Suppose $\graphset_0$ is the star and \(\graphset_{1}\) is vertex-transitive. Then \(\Pi_{1}\) contains permutations \(g_{i}\) mapping vertex \(1\) to vertex \(i\), for \(1 \le i \le n\).
Let $G = \{g_1, \dots, g_n\}$, and note that \(G \cap \Pi_{0} = \{g_1\}\).
Furthermore, the cosets \(g_{i} \Pi_{0}\) are unique.
Finally, by equation~\eqref{eqn: pi size},
we have
\[
|G \Pi_{0}|
=
\frac{|G| |\Pi_{0}|}{|G \cap \Pi_{0}|}
=
\frac{n (n - 1)!}{1}
= n!.
\]
Thus, we conclude that \(\Pi = S_{n}\). The proof when $\graphset_1$ is the star graph is analogous.

\subsection{Proof of Corollary~\ref{CorStarRisk0}}
\label{AppCorStarRisk0}

We first show that $|\scriptC_k(u,v)| = (k-1)2^{k-1}$, for any edge $(u,v)$. Note that the number of possible choices for the $k$ infected vertices in a cascade involving $u$ and $v$ is $k-1$, corresponding to segments of $k$ neighboring nodes in the cycle graph. Furthermore, the number of orderings of infected vertices in the segment is $2^{k-1}$, corresponding to whether the infection proceeds to the right or left on each step.

Substituting $C_k = (k-1) 2^{k-1}$ and 
\[
H(\beta) = \prod_{m=1}^{k-1} \frac{\beta}{n-m+2\beta}
\] 
into the risk bound in Proposition~\ref{PropStarRisk} yields the first part of the corollary.


For the second and third part of the corollary, note that
\[
\lim _{\beta \to \infty} H(\beta)
=
\prod_{m = 1}^{k - 1} \frac{1}{2}
= 
2^{-(k - 1)}.
\]
Simple algebra then yields the desired results.


\subsection{Proof of Corollary~\ref{CorStarRisk2}}
\label{AppCorStarRisk2}

The proof of this corollary refers back to the proof of Proposition~\ref{PropStarRisk}. Let $|\edgeset_1'| = n$ denote the number of edges in the cycle graph $\graphset_1'$. If $D' = 2$ denotes the maximum degree of $\graphset_1'$, we still have the bound
\begin{equation*}
t_\alpha 
\le 
|\edgeset_1'| \frac{k(k-1)}{n(n-1)} + \sqrt{\frac{k(D')^2}{2} \log \frac{1}{\alpha}},
\end{equation*}
from Lemma~\ref{LemTa}. The analog of Lemma~\ref{LemE1W}, specialized to the case of an infection spreading over the path graph, is the following bound:

\begin{lemma}
\label{LemE1W2}
Under the alternative hypothesis that $\graphset_1$ is the path graph, we have
\begin{equation*}
\E_1[W_{\graphset_1'}(\infected)] 
\ge
\frac{(n-c)(n-c-1)}{n^2(n-1)} \cdot (k-1) 2^{k-1} \cdot \frac{n-k+1}{n}
\cdot \prod_{m=1}^{k-1} \frac{\beta}{n-m+2\beta}.
\end{equation*}
\end{lemma}
\noindent The proof of Lemma~\ref{LemE1W2} is provided in Appendix~\ref{AppLemE1W2}.

Finally, we have the concentration inequalities
\begin{align*}
  &\begin{aligned}
\prob_{0}\left(W_{\graphset'_{1}}(\infected) - \expect_{0}\left[W_{\graphset_1'}(\infected)\right] \geq t\right)
&\leq 
\exp\left(-\frac{2t^2}{k(D')^2}\right),
\\ 
\prob_{1}\left(W_{\graphset_1'}(\infected) - \E_{1}\left[W_{\graphset_1'}(\infected)\right]\leq  -t\right) 
&\le  
\exp\left(-\frac{2t^2}{k(D')^2}\right).
  \end{aligned}
\end{align*}
Combining the pieces as in the proof of Proposition~\ref{PropStarRisk} then yields the desired bound.


\section{Proofs of supporting lemmas}
\label{AppLemmas}

Finally, we provide proofs of supporting technical lemmas.

\subsection{Proof of Lemma~\ref{LemStar}}
\label{AppLemStar}

Clearly, we have
\begin{equation*}
\prob_1(\infected_1 = 0) = \prob_1(\infected_1 \neq \star) \prob_1(\infected_1 = 0 | \infected_1 \neq \star) = \left(\frac{n-c}{n}\right) \prob_1(\infected_1 = 0 | \infected_1 \neq \star).
\end{equation*}
The latter probability is easier to calculate, since we may consider a process where we first choose $c$ of the vertices $\{2, \dots, n\}$ to censor, and then compute the probability that the $k$ infected nodes lying in the remaining vertex set are all leaf nodes. 
Since vertex 1 is not infected, the spreading process is agnostic to the infection status of the $c$ censored nodes.

We first consider the lower bound. We have
\begin{align*}
\prob_1\left\{\infected_1 = 0 | \infected_1 \neq \star\right\} &= \prod_{j = 0}^{k - 1} \frac{n - c - 1 - j}{(n - c - j) + j \beta} \\ 
& = \exp\left(\sum_{j = 0}^{k - 1} \log \frac{n - c - 1 - j}{(n - c - j) + j \beta}  \right) \\ 
& = \exp\left(-\sum_{j = 0}^{k - 1} \int_{n - c - j - 1}^{(n - c - j) + j \beta} \frac{1}{x} dx \right) \\ 
& \ge \exp\left(- \sum_{j = 0}^{k - 1} \frac{1 + j \beta}{n - c - j - 1} \right) \\ 
& \ge \exp\left(- \frac{k + \beta k(k - 1) / 2}{n - c - k} \right).
\end{align*}
The upper bound may be derived in an analogous fashion. We have
\begin{align*}
\prob_1\left\{\infected_1 = 0 | \infected_1 \neq \star \right\} & = \exp\left(-\sum_{j = 0}^{k - 1}\int_{n - c - j - 1}^{(n - c - j) + j\beta} \frac{1}{x} dx \right) \\
& \le \exp\left(-\sum_{j = 0}^{k - 1} \frac{1 + j\beta}{(n - c) + j(\beta - 1)} \right).
\end{align*}
When $\beta \ge 1$, the denominator is maximized for $j = k-1$; when $\beta < 1$, the denominator is maximized for $j = 0$. In the first case, we have
\begin{align*}
\prob_1\{\infected_1 = 0 | \infected_1 \neq \star\} & \le \exp\left(- \frac{1}{(n - c) + (k-1)(\beta - 1)} \sum_{j = 0}^{k - 1} (1 + j\beta) \right) \\
& = \exp\left(- \frac{k + \beta k (k - 1) / 2}{(n - c - k + 1) + (k-1)\beta} \right).
\end{align*}
In the second case, we have
\begin{align*}
\prob_1\{\infected_1 = 0 | \infected_1 \neq \star\} & \le \exp\left(- \frac{1}{n - c} \sum_{j = 0}^{k - 1} (1 + j\beta) \right) 
 = \exp\left(- \frac{k + \beta k (k - 1) / 2}{n-c} \right).
\end{align*}

\subsection{Proof of Lemma~\ref{LemE1W}}
\label{AppLemE1W}

We begin by writing
\begin{align*}
\expect _1[W(\infected)] 
&= 
\sum_{(u, v) \in \edgeset_{1}} \prob_1\left(\infected_u = \infected_v = 1\right) \\ 
& 
\ge \sum_{(u,v) \in \edgeset_1} |\scriptC_k(u,v)| \cdot \frac{\binom{n-2}{c}}{\binom{n}{c}} 
\cdot \frac{1}{n} \prod_{m = 1}^{k - 1} \frac{\beta}{n - m + \beta (2 + m(D - 2))}.
\end{align*}
Indeed, the inequality comes from restricting our consideration to infections where the $k - 1$ nodes after the first are infected along edges of the graph rather than by random infection, and $u$ and $v$ are in the initial infection set. The term $|\scriptC_k(u,v)|$ counts the number of such infection paths, and the term $\binom{n-2}{c} / \binom{n}{c}$ computes the probability that $u$ and $v$ will remain uncensored at the end of the process. Finally, the factor
\begin{equation*}
\prod_{m = 1}^{k - 1} \frac{\beta}{n - m + \beta (2 + m(D - 2))}
\end{equation*}
lower-bounds the probability that each of the $k-1$ vertices after the first contracts the disease from one of its infected neighbors. 
Note that at each stage, the total number of edges connecting the $n-m$ uninfected nodes to the $m$ previously infected nodes is bounded above by $2 + m(D-2)$, since the infected nodes are necessarily connected to each other.

Recalling the definition of $H(\beta)$, we then have
\begin{align*}
\E_1[W(\infected)] & \ge \frac{(n-c)(n-c-1)}{n^2(n-1)} \cdot H(\beta) \cdot \sum_{(u,v) \in \edgeset_1} |\scriptC_k(u,v)| \\
& \ge |\edgeset_1| \frac{(n-c)(n-c-1)}{n^2(n-1)} C_k H(\beta),
\end{align*}
as wanted.


\subsection{Proof of Lemma~\ref{LemE1W2}}
\label{AppLemE1W2}

As in the proof of Lemma~\ref{LemE1W}, we begin by writing
\begin{align*}
\E_1\left[W_{\graphset_1'}(\infected)\right] 
& = 
\sum_{(u,v) \in \edgeset_1'} \prob_1(\infected_u = \infected_v = 1) \\
& \ge 
\sum_{(u,v) \in \edgeset_1'} |\scriptC_k'(u,v)| \cdot \frac{\binom{n-2}{c}}{\binom{n}{c}} 
\cdot \frac{1}{n} \prod_{m=1}^{k-1} \frac{\beta}{n-m + 2\beta},
\end{align*}
where $\scriptC_k'(u,v)$ denotes the set of cascades involving $(u,v)$ in $\graphset_1'$.

We claim that
\begin{equation}
\label{EqnCascadeCt}
\sum_{(u,v) \in \edgeset_1'} |\scriptC_k'(u,v)| = (k-1)(n-k+1) \cdot 2^{k-1},
\end{equation}
from which the result follows. Indeed, for $(u,v) = (i,i+1)$, with $1 \le i \le k-1$, we have
\begin{equation*}
|\scriptC_k'(u,v)| = i \cdot 2^{k-1},
\end{equation*}
since we have $i$ choices for the collection of infected vertices in the cascade, and given a collection of vertices, the infection may spread according to $2^{k-1}$ different orderings. Similarly, we may argue that
\begin{align*}
|\scriptC_k'(u,v)| & = (n-i) \cdot 2^{k-1}, \qquad \text{for } n-k+1 \le i \le n-1, \\
|\scriptC_k'(u,v)| & = (k-1) \cdot 2^{k-1}, \qquad \text{for } k \le i \le n-k.
\end{align*}
Summing up over all choices of $(u,v)$ then yields
\begin{align*}
\sum_{(u,v) \in \graphset_1'} |\scriptC_k'(u,v)| & = k(k-1) \cdot 2^{k-1} + (k-1)(n-2k+1) \cdot 2^{k-1} \\
& = (k-1)(n-k+1) \cdot 2^{k-1},
\end{align*}
which is equation~\eqref{EqnCascadeCt}.

\section{Proofs for general graphs}
\label{app:general_graphs:proofs}

In this section, we provide the proofs for the proposed discretization strategy and the validity of the proposed hypothesis testing algorithms. We also provide a discretization mechanism for the Ising model.

\subsection{Discretization proof}
\label{SecDiscProof}

The purpose of this section is to prove Proposition~\ref{propDiscretizationMain}.
To assist, we start with the following proposition:

\begin{proposition}
Let \(\betamin = \min\{\beta, \beta'\}\) and \(\betamax = \max\{\beta, \beta'\}\).
For finite \(\beta\) and \(\beta'\), we have the bound
\begin{align*}
&\begin{aligned}
\tvdist\left(\prob_{\beta}, \prob_{\beta'}\right)
&\leq 
\min\left\{
|\beta - \beta'| 
M_{0} , \;
\frac{k + c}{2} \log\frac{\betamax}{\betamin}, \; 
\frac{N_{0}}{\betamin}
\right\}.
\end{aligned}
\end{align*}
\label{propBetaBound}
\end{proposition}


\begin{proof}
Let \(\infected\) and \(\infected'\) denote infections generated according to \(\prob_{\beta}\) and \(\prob_{\beta'}\), respectively.
First, we want a simpler representation of \(\infected\) and \(\infected'\).
Let \(C\) be \(c\) censored random variables, which may be chosen in any manner, such as uniformly random choices or a fixed set of vertices, as long as this choice is independent of the path random variables to be defined now.
Let \(\pathvar\) be a path random variable (vector) of length \(k + c\) for parameter \(\beta\).
To generate this, let \(X_{t}\) denote the indicator of a spread over the edges at time \(t\), i.e.,
\[
\prob\left(X_{t} = 1 | \pathvar_{1:t - 1} = P_{1:t - 1}\right)
=
\frac{\wneighbors_{t}(P_{1:t - 1}) \beta}{n + 1 - t + \wneighbors_{t}(P_{1:t - 1}) \beta}.
\]
Next, we denote \(Y_{t}\) to be a uniform choice a random vertex, i.e.,
\[
\prob\left(Y_{t} = v | \pathvar_{1:t - 1} = P_{1:t - 1}\right)
=
\begin{cases}
\frac{1}{n + 1 - t}, & v \not \in \pathvar_{1:t - 1}, \\
0, & v \in \pathvar_{1:t-1}.
\end{cases}
\] 
Further, if \(X_{t} = 0\), then \(\pathvar_{t} = Y_{t}\).
Finally, we denote \(Z_{t}\) to be a random vertex chosen according to
\[
\prob\left(Z_{t} = v | \pathvar_{1:t - 1} = P_{1: t - 1}\right)
=
\frac{\wneighbors_{t, v}(P_{1: t - 1})}{ \wneighbors_{t}(P_{1:t - 1})},
\]
and if \(X_{t} = 1\), then we set \(\pathvar_{t} = Z_{t}\).
Next, we can write
\begin{align*}
&\begin{aligned}
\infected
&=
f(C, \pathvar)
=
g(C, X_{1:k + c}, Y_{1:k + c}, Z_{1:k + c}),
\end{aligned}
\end{align*}
where the functions \(f\) and \(g\) may be defined as follows:
For \(f\), the vertices of \(C\) are censored.
Then, the first \(k\) uninfected variables of \(\pathvar\) are chosen to be infected, and the remaining choices are ignored.
The function \(g\) is chosen similarly, since we are simply breaking the path into its various choices.
Note that this gives the desired measure on \(\infected\).
Similarly, we can write
\begin{align*}
&\begin{aligned}
\infected'
&=
f(C, \pathvar')
=
g(C, X'_{1:k + c}, Y'_{1:k + c}, Z'_{1:k + c}),
\end{aligned}
\end{align*}
for analogous choices \(X'_{t}\), \(Y'_{t}\), and \(Z'_{t}\) for the parameter \(\beta'\) and path \(\pathvar'\).
Further, we couple \(\pathvar\) and \(\pathvar'\) such that if \(\pathvar_{1:t - 1} = \pathvar'_{1:t - 1}\), then \(Y_{t} = Y'_{t}\) and \(Z_{t} = Z'_{t}\). Additionally, assume that \(X_{t}\) and \(X'_{t}\) are maximally coupled.

Let \(\prob_{X}\) denote the measure with respect to a random variable \(X\).
The first step is to use the data-processing inequality of Lemma~\ref{lemmafDivDPI} and Lemma~\ref{lemmaTVCoupling} to obtain
\begin{align*}
&\begin{aligned}
\tvdist\left(\prob_{\beta}, \prob_{\beta'}\right)
&\leq 
\tvdist\left(\prob_{C, X_{1:k + c}, Y_{k + c}, Z_{k + c}},
\prob_{C, X'_{1:k + c}, Y'_{k + c}, Z'_{k + c}}
\right) \\
&\leq 
\prob\left((X'_{1:k + c}, Y'_{k + c}, Z'_{k + c}) \neq (X'_{1:k + c}, Y'_{k + c}, Z'_{k + c}) 
\right) \\
&\leq 
\prob\left(
X_{1: k + c} \neq X'_{1:k + c}
\right),
\end{aligned}
\end{align*}
where the final inequality follows due to the coupling.

Next, we define the set \(\pathset_{t}\) to be
\[
\pathset_{t}
:=
\left\{
(X_{1:t}, Y_{1:t}, Z_{1:t}, X'_{1:t}, Y'_{1:t}, Z'_{1:t}):
X_{1:t} = X'_{1:t}
\right\}.
\]
Then, we have
\begin{align}
&\begin{aligned}
\tvdist\left(\prob_{\beta}, \prob_{\beta'}\right)
&\leq
\sum_{t = 1}^{k + c} 
\prob\left(X_{1:t - 1} = X'_{1:t - 1}\right)
\prob\left(X_{t} \neq X'_{t} | X_{1:t - 1} = X'_{1:t - 1}\right) \\
&\leq 
\sum_{t = 1}^{k + c}
\sum_{E \in \pathset_{t - 1}} \prob(E) \prob\left(X_{t} \neq X_{t}' | E\right) \\
&\leq 
\sum_{t = 1}^{k + c} \max_{E \in \pathset_{t - 1}}
\prob\left(X_{t} \neq X_{t}' | E\right). 
\label{eqnSplitFiniteInfinite}
\end{aligned}
\end{align}

Inequality~\eqref{eqnSplitFiniteInfinite} yields
\begin{align*}
&\begin{aligned}
\tvdist\left(\prob_{\beta}, \prob_{\beta'}\right)
&\leq 
\sum_{t = 1}^{k + c} \max_{P_{1:t - 1}}
\biggr|
\frac{\wneighbors_{t}(P_{1:t - 1}) \beta}{n + 1 - t + \wneighbors_{t}(P_{1:t - 1}) \beta} 
-
\frac{\wneighbors_{t}(P_{1:t - 1}) \beta'}{n + 1 - t + \wneighbors_{t}(P_{1:t - 1}) \beta'}
\biggr|,
\end{aligned}
\end{align*}
where this follows from the maximal coupling of \(X_{t}\) and \(X'_{t}\), i.e., a Bernoulli \(B_{p}\) of parameter \(p\) and a Bernoulli \(B_{q}\) of parameter \(q\) are maximally coupled if \(\tvdist(\prob_{B_{p}}, \prob_{B_{q}}) = |p - q|\).

The next step is to examine the function
\[
h(x)
=
\frac{ax}{b + ax},
\]
where \(a, b \geq 0\).
The derivative of \(h\) is
\begin{align*}
&\begin{aligned}
h'(x)
&=
\frac{ab}{(b+ax)^2}
\leq 
\min\left\{\frac{a}{b}, \;
\frac{1}{2x}, \;
\frac{b}{ax^{2}}
\right\}.
\end{aligned}
\end{align*}

We apply this to our present problem to obtain
\begin{align*}
&\begin{aligned}
\tvdist\left(\prob_{\beta}, \prob_{\beta'}\right)
&\leq
\sum_{t = 1}^{k + c}
\max_{P_{1:t - 1}}
\int_{\betamin}^{\betamax} 
\min\left\{
\frac{\wneighbors_{t}(P_{1:t - 1})}{n + 1 - t}, \;
\frac{1}{2x}, \;
\frac{n + 1 - t}{\wneighbors_{t}(P_{1:t - 1})x^{2}}
\ind\left\{
\wneighbors_{t}(P_{1:t - 1}) > 0
\right\}
\right\}
dx \\
&\leq 
\sum_{t = 1}^{k + c}
\max_{P_{1:t - 1}}
\min\left\{
\int_{\betamin}^{\betamax} 
\frac{\wneighbors_{t}(P_{1:t - 1})}{n + 1 - t} dx, \;
\int_{\betamin}^{\betamax} 
\frac{1}{2x}dx, \; 
\right. \\ &\qquad \left.
\int_{\betamin}^{\betamax} 
\frac{n + 1 - t}{\wneighbors_{t}(P_{1:t - 1})x^{2}}
\ind\left\{
\wneighbors_{t}(P_{1:t - 1}) > 0
\right\}
dx
\right\} \\
&=
\sum_{t = 1}^{k + c}
\max_{P_{1:t - 1}}
\min\left\{
|\beta - \beta'| 
\frac{\wneighbors_{t}(P_{1:t - 1})}{n + 1 - t}, \;
\frac{1}{2} \log\frac{\betamax}{\betamin}, \; 
 \right. \\ &\qquad \left.
\frac{n + 1 - t}{\wneighbors_{t}(P_{1:t - 1})}
\left(\frac{1}{\betamin} - \frac{1}{\betamax}
\right)
\ind\left\{
\wneighbors_{t}(P_{1:t - 1}) > 0
\right\}
\right\}
\\
&\leq 
\min\left\{
|\beta - \beta'| 
\sum_{t = 1}^{k + c}
\max_{P_{1:t - 1}}
\frac{\wneighbors_{t}(P_{1:t - 1})}{n + 1 - t}, \;
\frac{k + c}{2} \log\frac{\betamax}{\betamin}, \; 
 \right. \\ &\qquad \left.
\frac{1}{\betamin}
\sum_{t = 1}^{k + c}
\max_{P_{1:t - 1}}
\frac{n + 1 - t}{\wneighbors_{t}(P_{1:t - 1})}
\ind\left\{
\wneighbors_{t}(P_{1:t - 1}) > 0
\right\}
\right\} \\
&\leq 
\min\left\{
|\beta - \beta'| 
M_{0} , \;
\frac{k + c}{2} \log\frac{\betamax}{\betamin}, \; 
\frac{k + c}{\betamin}
\left(n - \frac{k + c - 1}{2} \right)
\right\}  \\
&=
\min\left\{
|\beta - \beta'| 
M_{0} , \;
\frac{k + c}{2} \log\frac{\betamax}{\betamin}, \; 
\frac{N_{0}}{\betamin}
\right\},
\end{aligned}
\end{align*}
where in the final inequality we use the assumption that \(W_{t}(P_{1:t - 1}) \geq 1\) or that \(
\wneighbors_{t}(P_{1:t - 1}) = 0\),
and this completes the proof.
\end{proof}
We can now prove our main proposition:

\begin{proof}[Proof of Proposition~\ref{propDiscretizationMain}]
Let \(\beta\) be an element of \(\completespace\), and let \(\infected\) be an infection generated on \(\graphset\) with parameter \(\beta\).
We need to show that \(\completespace_{D}\) has an element \(\beta'\) such that an infection \(\infected'\) on \(\graphset\) with parameter \(\beta'\) has a distribution close to that of \(\infected\). 
We consider the cases where \(\beta\) is in \([0, M_{2} / M_{1}]\),  \([M_{2} / M_{1}, N_{0} / \delta]\), and \([N_{0} / \delta, \infty]\).

First, suppose \(\beta\) is in \([0, M_{2} / M_{1}]\). 
Then \(\beta' = F_{D}(\beta)\) satisfies \(|\beta - \beta'| \leq \frac{1}{2M_1}\).
By Proposition~\ref{propBetaBound}, we have
\begin{align*}
& \begin{aligned}
\tvdist\left(\prob_{\beta}, \prob_{\beta'}\right)
&\leq 
|\beta - \beta'|
M_{0}
\leq 
\frac{M_{0}}{2M_{1}}
\leq 
\delta,
\end{aligned}
\end{align*}
which is what we wanted to show.

Next, we consider the case of \(\beta\) in \([M_{2} / M_{1}, N_{0} / \delta]\).
Let \(\beta' = F_{D}(\beta)\).
Then we have
\[
\betamax 
\leq 
\exp\left(\frac{2 \delta}{k + c}\right) 
\betamin.
\]
Using this and Proposition~\ref{propBetaBound}, we have
\begin{align*}
& \begin{aligned}
\tvdist\left(\prob_{\beta}, \prob_{\beta'}\right)
&\leq 
\frac{k + c}{2} \log \frac{\betamax}{\betamin}
\leq 
\delta.
\end{aligned}
\end{align*}
Thus, we once again see that \(\beta\) and \(\beta'\) satisfy equation~\eqref{eqnDiscretizationFunction}.

Finally, we consider the case of \(\beta \geq N_{0} / \delta\).
Then we have \(\beta' = F_{D}(\beta) \geq N_{0} / \delta\), and by Proposition~\ref{propBetaBound}, we have
\begin{align*}
& \begin{aligned}
\tvdist\left(\prob_{\beta}, \prob_{\infty}\right)
&\leq 
\frac{N_{0}}{\betamin}
\leq
\delta.
\end{aligned}
\end{align*}
This completes the proof of the proposition.
\end{proof}

\subsection{Algorithm proofs}

Our main goal is to prove Theorem~\ref{theoremPCDiscretizationAlgorithm}.
This this end, we separate the Type I error into two components: the ``actual'' Type I error for an unusual observation and a Type I error resulting from simulation inaccuracies.
We start with a lemma to this effect.

\begin{lemma}
Let \(t\) be a real number, let \(\hat{t}\) be a random threshold independent of \(\infected\), and let \(S: \infspace_{k, c} \to \reals\) be a statistic.
Then we have
\[
\prob_{0, \beta}\left(S(\infected) > \hat{t}\right)
\leq 
\prob_{0, \beta}\left(S(\infected) > t\right)
+
\prob_{0, \beta}\left(\hat{t} < t\right).
\]
\label{lemmaTypeIerrors}
\end{lemma} 
The proof is a series of straightforward manipulations that we give in Section~\ref{sec:disc_technical:additionalProofs}.
Next, we want to use basic concentration results to show that quantiles of statistics concentrate sufficiently nicely.

\begin{lemma}
Let \(S: \infspace_{k, c} \to \reals\) be a statistic.
If 
\[
N_{\sims} 
\geq 
\left(\frac{1}{2\epsilon^{2}} + \frac{2}{3 \epsilon}\right) \log\frac{1}{\xi},
\]
then we have
\[
\prob_{0, \beta}\left(
\hat{t}_{\alpha - \epsilon, \beta} < t_{\alpha}
\right)
\leq 
\xi.
\]
\label{lemmaQuantileConcentration}
\end{lemma}

The proof of this lemma is also given in Section~\ref{sec:disc_technical:additionalProofs}.
Finally, we can prove Theorem~\ref{theoremPCDiscretizationAlgorithm}.

\begin{proof}[Proof of Theorem~\ref{theoremPCDiscretizationAlgorithm}]
Let $\gamma = \delta + \xi$. First, we use Proposition~\ref{propDiscretizationMain}, Lemma~\ref{lemmaTypeIerrors}, and inequality~\eqref{eqnGeqThreshold} to obtain
\begin{align*}
& \begin{aligned}
\sup_{\beta \in \paramset_{0}} \prob_{0, \beta}\left(S(\infected) > \hat{t}_{\alpha - \gamma - \epsilon, \beta}\right)
&\leq 
\sup_{\beta \in \paramset_{0, D}} \prob_{0, \beta}\left(S(\infected) > \hat{t}_{\alpha - \gamma - \epsilon, \beta}\right)
+
\delta \\
&\leq 
\sup_{\beta \in \paramset_{0, D}} \prob_{0, \beta}\left(
S(\infected) > t_{\alpha - \gamma, \beta}\right)
+
\prob_{0, \beta}\left(\hat{t}_{\alpha - \gamma - \epsilon, \beta} < t_{\alpha - \gamma, \beta}\right) 
+ \delta \\
&\leq 
\alpha - \gamma + \delta
+
\sup_{\beta \in \paramset_{0, D}}
\prob_{0, \beta}\left(\hat{t}_{\alpha - \gamma - \epsilon, \beta} < t_{\alpha - \gamma, \beta}\right) \\
&\leq 
\alpha - \gamma + \delta + \xi 
\\ &= 
\alpha,
\end{aligned}
\end{align*}
where the last inequality follows by Lemma~\ref{lemmaQuantileConcentration}.
This completes the proof.
\end{proof}

\subsection{Two-statistic test}
\label{SecTwoStatistic}

The purpose of this section is to prove Corollary~\ref{propTwoStatistic}.
The proof is similar to that of Theorem~\ref{theoremPCDiscretizationAlgorithm}, although we need to be slightly more careful with the discretization since we do not require one of \(-S_{0}(\infected)\) or \(S_{1}(\infected)\) to be larger than all of their \(1 - (\alpha_{i} - \gamma - \epsilon)\) quantiles.

\begin{proof}[Proof of Proposition~\ref{propTwoStatistic}]
Again define $\gamma = \delta + \epsilon$. Let \(F_{D}: \paramset_{0} \to \paramset_{0, D}\) be the discretization function.
Define the function
\[
\hat{s}_{1, \alpha}(\beta)
:=
\hat{s}_{1, \alpha, F_{D}(\beta)}.
\]

Now, we split the Type I error into two terms, as follows:
\begin{align*}
& \begin{aligned}
\typeone
&=
\sup_{\beta_{0} \in \paramset_{0}}
\prob _{0, \beta_{0}}\left(
\left\{S_{1}(\infected) > \hat{s}_{1, \alpha_{1} - \gamma - \epsilon}(\beta_{0})\right\}
\cup 
\left\{
S_{0}(\infected) < \hat{s}_{0, 1  - (\alpha_{0} - \gamma - \epsilon)}(\beta_{0})
\right\}
\right)
\\ 
&\leq 
\sup_{\beta_{0} \in \paramset_{0}}
\prob _{0, \beta_{0}}\left(
S_{1}(\infected) > \hat{s}_{1, \alpha_{1} - \gamma - \epsilon} (\beta_{0})
\right)
+ 
\sup_{\beta_{0} \in \paramset_{0}}
\prob _{0, \beta_{0}}\left(
S_{0}(\infected) < \hat{s}_{0, 1  - (\alpha_{0} - \gamma - \epsilon)} (\beta_{0})
\right).
   \end{aligned}
\end{align*}
Since the analyses of the \(S_{0}\) and \(S_{1}\) terms is identical up to a sign, we focus on the term involving \(S_{1}\),
which we denote by \(\typeone_{1}\).

At this point, we observe that 
\[
\hat{s}_{1, \alpha_{1} - \gamma - \epsilon}(\beta)
=
\hat{s}_{1, \alpha_{1} - \gamma - \epsilon}(\beta')
\]
whenever \(F_{D}(\beta) = F_{D}(\beta')\).
Thus, we have
\begin{align*}
& \begin{aligned}
\typeone_{1} 
&=
\sup_{\beta_{0} \in \paramset_{0}}
\prob _{0, \beta_{0}}\left(
S_{1}(\infected) > \hat{s}_{1, \alpha_{1} - \gamma - \epsilon}(\beta_{0})
\right) \\
&=
\sup_{\beta_{0} \in \paramset_{0, D}}
\sup_{\beta' \in F_{D}^{-1}(\beta_{0})}
\prob _{0, \beta'}\left(
S_{1}(\infected) > \hat{s}_{1, \alpha_{1} - \gamma - \epsilon}(\beta_{0})
\right) \\
&=
\sup_{\beta_{0} \in \paramset_{0, D}}
\sup_{\beta' \in F_{D}^{-1}(\beta_{0})}
\prob _{0, \beta'}\left(
S_{1}(\infected) > \hat{s}_{1, \alpha_{1} - \gamma - \epsilon, \beta_{0}}
\right) \\
&\leq 
\sup_{\beta_{0} \in \paramset_{0, D}}
\prob _{0, \beta_{0}}\left(
S_{1}(\infected) > \hat{s}_{1, \alpha_{1} - \gamma - \epsilon, \beta_{0}}
\right) + \delta,
   \end{aligned}
\end{align*}
where the inequality is due to Proposition~\ref{propDiscretizationMain}.

Now, the proof follows that of Theorem~\ref{theoremPCDiscretizationAlgorithm}.
Using Lemma~\ref{lemmaTypeIerrors}, equation~\eqref{eqnGeqThreshold}, and Lemma~\ref{lemmaQuantileConcentration}, we obtain
\begin{align*}
& \begin{aligned}
\typeone_{1}
&\le
\sup_{\beta_{0} \in \paramset_{0, D}}
\prob _{0, \beta_{0}}\left(
S_{1}(\infected) > s_{1, \alpha_{1} - \gamma, \beta_{0}}
\right)
+
\prob _{0, \beta_{0}}\left(
\hat{s}_{1, \alpha_{1} - \gamma, \beta_{0}}
< 
s_{1, \alpha_{1} - \gamma, \beta_{0}}
\right) 
+ \delta \\
&\leq 
\alpha_{1} - \gamma + \delta +
\sup_{\beta_{0} \in \paramset_{0, D}}
\prob _{0, \beta_{0}}\left(
\hat{s}_{1, \alpha_{1} - \gamma, \beta_{0}}
< 
s_{1, \alpha_{1} - \gamma, \beta_{0}}
\right) \\
&\leq 
\alpha_{1} - \gamma + \delta +
\xi \\ 
&\leq 
\alpha_{1}.
   \end{aligned}
\end{align*}
Since the term for \(S_{0}\) may be computed similarly, we see that
\(
\typeone
\leq 
\alpha_{1} + \alpha_{0}
= 
\alpha,
\)
which completes the proof.
\end{proof}

\subsubsection{Confidence sets}

We also provide an additional confidence set algorithm.
This algorithm mirrors the two-statistic test of Algorithm~\ref{algorithmTwoStatistic}.
\begin{algorithm}[!h]
\SetKwInOut{Input}{Input}
\Input{Type I error tolerances \(\alpha_{0}\) and \(\alpha_{1}\) where $\alpha_{0} + \alpha_{1} = \alpha$, approximation parameters \(\epsilon\), \(\delta\), and \(\xi\), observed infection vector $\infected$, set of graphs \(\graphspace\), statistics \(S_{0}\) and \(S_{1}\), discretization \(\paramset_{\graphset, D}\) of size \(N_{\graphset}\) for each \(\graphset\) in \(\graphspace\), discretization functions \(F_{\graphset, D}\)}

Define 
\(N_{\sims} \geq \left(\frac{1}{2\epsilon^{2}} + \frac{8}{3 \epsilon}\right) \log \frac{1}{\xi}\)

For each \(\beta\) in \(\paramset_{D}\), simulate an infection \(N_{\sims}\) times on \(\graphset\) to obtain the approximate the \((\alpha_{0} - \gamma - \epsilon)\) quantile \(\hat{s}_{0, 1 - \alpha_{0} - \delta - \xi - \epsilon, \beta}\) and the \(1 - (\alpha_{1} - \delta - \xi - \epsilon)\) quantile \(\hat{s}_{1, (\alpha_{1} - \delta - \xi - \epsilon), \beta}\)

Define the discrete confidence set
\[
C_{\graphset, D}
:=
\left\{
\beta \in \paramset_{D}:
S_{0}(\infected) \geq \hat{s}_{0, 1 - \alpha_{0} - \delta - \xi - \epsilon, \beta}
\text{ and }
S_{1}(\infected) \leq \hat{s}_{1, (\alpha_{1} - \delta - \xi - \epsilon), \beta}
\right\}
\]

Return the confidence set \(C = \bigcup_{\graphset \in \graphspace} 
\left(\graphset, F_{\graphset, D}^{-1}(C_{\graphset, D})\right)\)
\caption{Two-Statistic Confidence Set}
\label{algorithmTwoStatisticConfidenceSet}
\end{algorithm}

\begin{corollary}
If for each \(\graphset\) in \(\graphspace\) the discretization \(\paramset_{\graphset, D}\) and discretization function \(F_{\graphset, D}\) satisfy equation~\eqref{eqnDiscretizationFunction}, then the set \(C\) given by Algorithm~\ref{algorithmTwoStatisticConfidenceSet} is an \((1 - \alpha)\)-confidence set.
\label{corTwoStatisticConfidenceSet}
\end{corollary}

\subsection{Threshold Function Test}
\label{SecThreshold}

In this section, we present the details of the threshold function test.
We start by introducing the algorithm, and then we prove the validity of the test.

\subsubsection{Algorithm}

The main difficulty is computing an appropriate threshold function for all values of \(\beta_{0}\) and \(\beta_{1}\) for a statisic \(S\) of the form given in equation~\eqref{eqnParameterDependentStatistic}.
A straightforward solution to this problem is to impose a Lipschitz assumption on \(S_{1}(I; \cdot)\) and \(S_{0}(I; \cdot)\), which leads to the Lipschitz continuity of the threshold.
Let \(\paramset_{1, D}\) be any finite subset of \(\paramset_{1}\),
and let \(\paramset_{D} = \paramset_{0, D} \times \paramset_{1, D}\).
Define the \((L_{0}, L_{1})\)-Lipschitz-penalized empirical quantile
\[
\tilde{t}_{\alpha, L_{0}, L_{1}}(\beta_{0}, \beta_{1})
=
\hat{t}_{\alpha, \beta_{0}'}(\beta_{0}', \beta_{1}')
+
\left(
L_{0}|\beta_{0} - \beta_{0}'|
+
L_{1}|\beta_{1} - \beta_{1}'|
\right),
\]
where \(\vect{\beta}' = F_{D}(\vect{\beta}) = (F_{0, D}(\beta_{0}), F_{1, D}(\beta_{1}))\), for a discretization function \(F: \paramset_{0} \times \paramset_{1} \to \paramset_{D}\).

\begin{algorithm}[!h]
\SetKwInOut{Input}{Input}
\Input{Type I error tolerance $\alpha > 0$, approximation parameters \(\epsilon\), \(\delta\), and \(\xi\), observed infection vector $\infected$, null graph \(\graphset_{0}\), statistic \(S\), discretization \(\paramset_{D}\), discretization function \(F_{D}\)}

Define 
\(
N_{\sims} 
\geq \left(\frac{1}{2\epsilon^{2}} + \frac{8}{3\epsilon} \right) \log
\frac{1}{\xi}
\)

For each \((\beta_{0}, \beta_{1})\) in \(\paramset_{D}\), simulate an infection \(N_{\sims}\) times on \(\graphset_{0}\) to obtain the approximate \(1 - (\alpha - \delta - \xi - \epsilon)\) quantile \(\hat{t}_{\alpha - \delta - \xi - \epsilon, \beta_{0}}\)

Compute the function \(\tilde{t}_{\alpha, L_{0}, L_{1}}(\beta_{0}, \beta_{1})\) with discretization function \(f\)

Reject the null hypothesis if 
\(
S(\infected; \beta_{0}, \beta_{1}) 
> 
\tilde{t}_{\alpha - \delta - \xi - \epsilon}(\beta_{0}, \beta_{1})
\) for all \((\beta_{0}, \beta_{1}) \in \paramset_{0} \times \paramset_{1}\)
\caption{Parameter-Dependent Lipschitz Test}
\label{algorithmPDSLipschitzAS}
\end{algorithm}

\begin{proposition}
Let \(S\) have the form given by equation~\eqref{eqnParameterDependentStatistic}.
Additionally, suppose \(S_{1}(I; \cdot)\) is \(L_{1}\)-Lipschitz and \(S_{0}(I; \cdot)\) is \(L_{0}\)-Lipschitz with probability \(1\).
If the discretization \(\paramset_{0, D}\) with discretization function \(F_{0, D}\) satisfies equation~\eqref{eqnDiscretizationFunction}, 
then Algorithm~\ref{algorithmPDSLipschitzAS} using \(\tilde{t}_{\alpha, L_{0}, L_{1}}\) controls the Type I error at level \(\alpha\).
\label{propPDSLipschitzAS}
\end{proposition}

Note that in order to obtain a discretization and discretization function satisfying equation~\eqref{eqnDiscretizationFunction}, it is sufficient to use the discretization that we introduced in Definition~\ref{defPinskerCouplingDiscretization}.

While this is a good first step toward using likelihood approximations, this result may be too conservative to be useful for certain graphs.
The main problem is the use of almost-sure Lipschitz assumptions, where the almost-sure Lipschitz constants may be far larger than what is seen in typical infections.
A natural idea is to relax the almost-sure Lipschitz constants into high-probability Lipschitz constants that can be approximated via simulation.
Unfortunately, doing this with the approximation techniques we have developed thus far does not seem to be possible.

\subsubsection{Proofs}

The goal of this section is to prove Proposition~\ref{propPDSLipschitzAS}.
We start by proving some useful lemmas.

\begin{lemma}
Let \(S(I; \beta_{0}, \beta_{1}) = S_{1}(I, \beta_{1}) - S_{0}(I, \beta_{0})\) where \(S_{0}(I, \cdot)\) and \(S_{1}(I, \cdot)\) are almost surely \(L_{0}\)-Lipschitz and \(L_{1}\)-Lipschitz, respectively.
Then \(t_{\alpha, \beta_{0}}(\beta_{0}', \beta_{1}')\) is \(L_{1}\)-Lipschitz in \(\beta_{1}'\) and \(L_{0}\)-Lipschitz in \(\beta_{0}'\).
\label{lemmaLipschitzThreshold}
\end{lemma}

\begin{proof}
Define the set of supporting infections \(X(\beta_{0}, \beta_{1})\) as
\[
X(\beta_{0}, \beta_{1})
:=
\left\{
I \in \infspace_{k, c}:
S(I; \beta_{0}, \beta_{1}) > t_{\alpha, \beta_{0}}(\beta_{0}, \beta_{1})
\right\}.
\]
Note that we have 
\[
t_{\alpha, \beta_{0}}(\beta_{0}, \beta_{1}) 
= 
\max\{S(I; \beta_{0}, \beta_{1}): I \not\in X(\beta_{0}, \beta_{1})\}.
\]
Thus, we have \(\prob_{0, \beta_{0}}(\infected \in X(\beta_{0}, \beta_{1})) \leq \alpha\), and for any \(t \in \support(S(\cdot; \beta_{0}, \beta_{1}))\) such that \(t < t_{\alpha, \beta_{0}}(\beta_{0}, \beta_{1})\), we have
\[
\prob_{0, \beta_{0}}(S(\infected; \beta_{0}, \beta_{1}) \geq t)
\geq 
\alpha.
\]

For simplicity, let \(\Delta = L_{0}|\beta_{0} - \beta_{0}'| + L_{1}|\beta_{1} - \beta_{1}'|\).
Next, we prove the upper and lower bounds on \(t_{\alpha, \beta_{0}}(\beta_{0}, \beta_{1})\) separately.
Since \(S_{i}\) is \(L_{i}\)-Lipschitz,
we see that
\[
S(I; \beta_{0}', \beta_{1}')
\leq 
S(I; \beta_{0}, \beta_{1}) + \Delta
\]
for each \(I\) in \(X(\beta_{0}, \beta_{1})\).
Thus, the probability that 
\[
S(\infected; \beta_{0}', \beta_{1}') \geq t_{\alpha, \beta_{0}}(\beta_{0}, \beta_{1}) + \Delta
\]
is at most \(\alpha\).
Therefore, we have 
\[
t_{\alpha, \beta_{0}}(\beta_{0}', \beta_{1}') 
\leq 
t_{\alpha, \beta_{0}}(\beta_{0}, \beta_{1}) + \Delta.
\]

Next, we prove the lower bound.
Analogously, we see that
\[
S(I; \beta_{0}', \beta_{1}')
\geq 
S(I; \beta_{0}, \beta_{1}) - \Delta
\]
for each \(I\) in \(X(\beta_{0}, \beta_{1})\).
Thus, the probability that 
\[
S(\infected; \beta_{0}', \beta_{1}') \geq t_{\alpha, \beta_{0}}(\beta_{0}, \beta_{1}) - \Delta
\]
is at least \(\alpha\).
Therefore, we have 
\[
t_{\alpha, \beta_{0}}(\beta_{0}', \beta_{1}') 
\geq 
t_{\alpha, \beta_{0}}(\beta_{0}, \beta_{1}) - \Delta.
\]
Putting everything together, we have
\[
\left|t_{\alpha, \beta_{0}}(\beta_{0}', \beta_{1}') 
-
t_{\alpha, \beta_{0}}(\beta_{0}, \beta_{1})
\right|
\leq 
\Delta,
\]
which proves the desired Lipschitz result.
\end{proof}

\begin{lemma}
Define the event
\[
E 
:=
\left\{
\tilde{t}_{\alpha - \gamma - \epsilon}(\beta_{0}', \beta_{1}')
<
t_{\alpha - \gamma, \beta_{0}}(\beta_{0}', \beta_{1}')
\right\},
\]
and the event
\[
E'
:=
\left\{
\hat{t}_{\alpha - \gamma - \epsilon}(\beta_{0}, \beta_{1})
<
t_{\alpha - \gamma, \beta_{0}}(\beta_{0}, \beta_{1})
\right\},
\]
for \(\beta_{0} \in \paramset_{0, D}\), \(\beta_{1} \in \paramset_{1, D}\), \(\beta_{0} \in \paramset_{0}\), and \(\beta_{1} \in \paramset_{1}\).
Then we have the inclusion \(E \subseteq E'\).
\label{lemmaThresholdSetInclusion}
\end{lemma}

\begin{proof}
Suppose that \(E\), holds.
As before, let \(\Delta = L_{0}|\beta_{0} - \beta_{0}'| + L_{1}|\beta_{1} - \beta_{1}'|\).
Then by the definition of \(\tilde{t}\), the assumption that \(E\) occurs, and Lemma~\ref{lemmaLipschitzThreshold}, we have
\begin{align*}
& \begin{aligned}
\hat{t}_{\alpha - \gamma - \epsilon}(\beta_{0}, \beta_{1}) + \Delta
&=
\tilde{t}_{\alpha - \gamma - \epsilon}(\beta_{0}', \beta_{1}') 
<
t_{\alpha - \gamma, \beta_{0}}(\beta_{0}', \beta_{1}') 
\leq 
t_{\alpha - \gamma, \beta_{0}}(\beta_{0}, \beta_{1})
+
\Delta.
\end{aligned}
\end{align*}
Subtracting \(\Delta\), we have
\[
\hat{t}_{\alpha - \gamma - \epsilon}(\beta_{0}, \beta_{1})
<
t_{\alpha - \gamma, \beta_{0}}(\beta_{0}, \beta_{1}),
\]
which completes the proof.
\end{proof}

\begin{proof}[Proof of Proposition~\ref{propPDSLipschitzAS}]

The proof is similar to the proof of Proposition~\ref{propTwoStatistic}. We write the Type I error as
\begin{align*}
& \begin{aligned}
\typeone
&:=
\sup_{\beta_{0} \in \paramset_{0}, \beta_{1} \in \paramset_{1}} 
\prob_{0, \beta_{0}}\left(
S(\infected; \beta_{0}, \beta_{1})
\geq 
\tilde{t}_{\alpha - \gamma - \epsilon}(\beta_{0}, \beta_{1})
\right)
\\
&\leq 
\sup_{\beta_{0} \in \paramset_{0}, \beta_{1} \in \paramset_{1}}
\sup_{\beta_{0}' \in F_{0, D}^{-1}(F_{0, D}(\beta_{0}))}
\prob_{0, \beta_{0}}\left(
S(\infected; \beta_{0}', \beta_{1}) \geq \tilde{t}_{\alpha - \gamma - \epsilon}(\beta_{0}', \beta_{1})
\right),
\end{aligned}
\end{align*}
where the discretization function is
\[
F_{D}(\beta_{0}, \beta_{1}) = (F_{0, D}(\beta_{0}), F_{1, D}(\beta_{1})).
\]
We then use Proposition~\ref{propDiscretizationMain} and Lemma~\ref{lemmaTypeIerrors} to obtain
\begin{align*}
& \begin{aligned}
\typeone 
&\leq 
\sup_{\beta_{0} \in \paramset_{0, D}, \beta_{1} \in \paramset_{1}}
\sup_{\beta_{0}' \in F_{0, D}^{-1}(\beta_{0})}
\prob_{0, \beta_{0}}\left(
S(\infected; \beta_{0}', \beta_{1}) \geq 
\tilde{t}_{\alpha - \gamma - \epsilon}(\beta_{0}', \beta_{1})
\right)
+ \delta \\
&\leq 
\sup_{\beta_{0} \in \paramset_{0, D}, \beta_{1} \in \paramset_{1}}
\sup_{\beta_{0}' \in F_{0, D}^{-1}(\beta_{0})}
\prob_{0, \beta_{0}}\left(
S(\infected; \beta_{0}', \beta_{1}) \geq t_{\alpha - \gamma}(\beta_{0}', \beta_{1}) 
\right)
\\&\qquad 
+
\prob_{0, \beta_{0}}\left(
\tilde{t}_{\alpha - \gamma - \epsilon}(\beta_{0}', \beta_{1})
<
t_{\alpha - \gamma}(\beta_{0}', \beta_{1})
\right)
+ \delta \\ 
&\leq 
\alpha - \gamma + \delta
+ 
\sup_{\beta_{0} \in \paramset_{0, D}, \beta_{1} \in \paramset_{1}}
\sup_{\beta_{0}' \in F_{0, D}^{-1}(\beta_{0})}
\prob_{0, \beta_{0}}\left(
\tilde{t}_{\alpha - \gamma - \epsilon}(\beta_{0}', \beta_{1})
<
t_{\alpha - \gamma}(\beta_{0}', \beta_{1})
\right),
\end{aligned}
\end{align*}
where we simplified the inner supremum using 
\(F_{0, D}(\beta_{0}) = \beta_{0}\) for \(\beta_{0}\) in \(\paramset_{0, D}\).
The second additional step that we need to take is to get this final probability back to a finite set of \(\beta_{0}\) and \(\beta_{1}\).
By Lemma~\ref{lemmaThresholdSetInclusion} and by Lemma~\ref{lemmaQuantileConcentration}, we obtain
\begin{align*}
& \begin{aligned}
\typeone
&\leq 
\alpha - \gamma + \delta 
+
\sup_{\beta_{0} \in \paramset_{0, D}, \beta_{1} \in \paramset_{1, D}}
\prob_{0, \beta_{0}}\left(
\hat{t}_{\alpha - \gamma - \epsilon}(\beta_{0}, \beta_{1})
<
t_{\alpha - \gamma}(\beta_{0}, \beta_{1})
\right)
\leq
\alpha - \gamma + \delta + \xi 
=
\alpha,
\end{aligned}
\end{align*}
and this completes the proof.
\end{proof}


\subsection{Ising model}
\label{sec:disc_technical:isingModel}
In this section, we state and prove a valid discretization for the Ising model.
Our discretization only applies for bounded parameter sets \(\paramset \subseteq \completespace_{\ising, R} =  [0, R]\).
\begin{definition}
Let 
\(H = \max_{J \in \uncensored(\infspace_{k, c})} |\energy(J)|\).
Define 
\[
N = \left\lceil \frac{RH}{\delta^{2}} \right\rceil + 1.
\]
We define the discretization 
\[
\completespace_{\ising, R, D}
:=
\left\{\frac{(i - 1) \delta^{2}}{H} : i = 1, \ldots, N 
\right\},
\]
and associated discretization function
\[
F_{\ising, R, D}(\beta) 
=
\argmin_{\beta' \in \completespace_{\ising, R, D}} |\beta - \beta'|.
\]
\label{defIsingDiscretization}
\end{definition}

\begin{proposition}
Let \(\paramset_{\ising}\) be a parameter set bounded by \(R\).
The discretization \(\completespace_{\ising, R, D}\) with discretization function \(F_{\ising, D}\) satisfies equation~\eqref{eqnDiscretizationFunction}.
\label{propIsingDiscretization}
\end{proposition}

We now embark on proving this proposition.
Note that because we lack a simple stochastic process representation here, we need to use different techniques than for the stochastic spreading model. 
First, we consider a helpful lemma.


\begin{lemma}
Let \(S: \infspace_{k, c} \to \reals\) be a statistic.
Consider the generalized Ising model with energy function \(\energy\).
Let \(H = \max_{J \in \uncensored(\infspace_{k, c})} |\energy(J)|\).
Then
\[
\tvdist\left(\prob_{\ising, \beta}, \prob_{\ising, \beta'}\right)
\leq 
\sqrt{2H |\beta - \beta'| }.
\]
\label{lemmaIsingSmallBeta}
\end{lemma}


\begin{proof}
We start by using Pinsker's inequality to obtain
\begin{align*}
&\begin{aligned}
\tvdist\left(\prob_{\ising, \beta}, \prob_{\ising, \beta'}\right)
&\leq
\sqrt{\frac{1}{2}\kldiv\left(\prob_{\ising, \beta}, \prob_{\ising, \beta'}
\right)
}.
\end{aligned}
\end{align*}
Thus, it only remains to compute the Kullback-Leibler divergence.
Applying the log-sum inequality of Lemma~\ref{lemmaLogSumInequality}, we have
\begin{align*}
&\begin{aligned}
\kldiv\left(
\prob_{\ising, \beta},
\prob_{\ising, \beta'}
\right)
&=
\sum_{J \in \infspace_{k, c}}
\prob_{\ising, \beta}(\infected = J)
\log
\frac{\prob_{\ising, \beta}(\infected = J)}{\prob_{\ising, \beta'}(\infected = J)} \\
&=
\sum_{J \in \infspace_{k, c}}
\frac{1}{Z(\beta)}
\sum_{J' \in \uncensored(J)}
\frac{1}{Z_{\ising}(\beta)} \exp(-\beta \energy(J'))
\\&\qquad
\times
\left(
\log \frac{Z(\beta')}{Z(\beta)} 
+
\log 
\frac{\sum_{J \in \uncensored(J)} \frac{1}{Z_{\ising}(\beta)} \exp(-\beta \energy(J'))}{\sum_{J' \in \uncensored(J)} \frac{1}{Z_{\ising}(\beta')} \exp(-\beta' \energy(J'))}
\right) \\
&\leq 
\log \frac{Z(\beta')}{Z(\beta)}
+
\sum_{J \in \infspace_{k, c}} \sum_{J' \in \uncensored(J)}
\frac{1}{Z(\beta)} \cdot \frac{1}{Z_{\ising}(\beta)}
\exp\left(-\beta \energy(J')\right)
\\ &\qquad
\times
\left(
\log \frac{Z_{\ising}(\beta')}{Z_{\ising}(\beta)}
+
\log \frac{\exp(-\beta \energy(J'))}{\exp(-\beta' \energy(J'))}
\right) \\
&\leq 
\log \frac{Z(\beta')}{Z(\beta)}
\\&\qquad +
\sum_{J \in \infspace_{k, c}} \sum_{J' \in \uncensored(J)}
\frac{1}{Z(\beta)} \cdot \frac{\exp(-\beta \energy(J'))}{Z_{\ising, k + c'}(\beta)}
\log \frac{Z_{\ising, k + c'}(\beta')}{Z_{\ising, k + c'}(\beta)}
\\&\qquad +
\sum_{J \in \infspace_{k, c}} \sum_{J' \in \uncensored(J)}
\frac{1}{Z(\beta)} \cdot \frac{\exp(-\beta \energy(J'))}{Z_{\ising}(\beta)}
\energy(J') (\beta' - \beta))
\\
&=:
S_{1} + S_{2} + (\beta' - \beta) \expect _{\ising, \beta} \energy(\infected).
\end{aligned}
\end{align*}
Note that the final term can be bounded by \(|\beta' - \beta| H\).
Now, we need to analyze the first two terms on the right hand side above, which we denote \(S_{1}\) and \(S_{2}\).
Starting with the latter, we have
\begin{align*}
&\begin{aligned}
S_{2}
&=
\sum_{J \in \infspace_{k, c}} \sum_{J' \in \uncensored(J)}
\frac{1}{Z(\beta)} \cdot \frac{1}{Z_{\ising, k + c'}(\beta)}
\exp(-\beta \energy(J'))
\log 
\frac{\sum_{J \in \infspace_{k + c', 0}} \exp(-\beta' \energy(J))}{\sum_{J \in \infspace_{k + c', 0}} \exp(-\beta \energy(J))} \\
&\leq 
\sum_{J \in \infspace_{k, c}} \sum_{J' \in \uncensored(J)}
\frac{1}{Z(\beta)} \cdot \frac{1}{Z_{\ising, k + c'}(\beta)}
\exp(-\beta \energy(J'))
\\&\qquad \times
\frac{1}{Z_{\ising}(\beta')}
\sum_{J \in \infspace_{k + c', 0}}
\exp(-\beta' \energy(J))
\log 
\frac{\exp(-\beta' \energy(J))}{\exp(-\beta \energy(J))} \\
&=
\sum_{J \in \infspace_{k, c}} \sum_{J' \in \uncensored(J)}
\frac{1}{Z(\beta)} \cdot \frac{1}{Z_{\ising, k + c'}(\beta)}
\exp(-\beta \energy(J')) 
\\  &\qquad\times 
\frac{1}{Z_{\ising}}
\sum_{J \in \infspace_{k + c', 0}} \exp(-\beta' \energy(J))
\energy(J)(\beta - \beta') \\
&\leq 
\sum_{J \in \infspace_{k, c}} \sum_{J' \in \uncensored(J)}
\frac{1}{Z(\beta)} \cdot \frac{1}{Z_{\ising, k + c'}(\beta)}
\exp(-\beta \energy(J'))
|\beta - \beta'| H \\
&=
|\beta - \beta'| H.
\end{aligned}
\end{align*}

Now, we can compute \(S_{1}\).
We have
\begin{align*}
&\begin{aligned}
S_{1}
&=
\log \frac{Z(\beta')}{Z(\beta)} \\
&\leq 
\frac{1}{Z(\beta')} \sum_{J \in \infspace_{k, c}}
\sum_{J' \in \uncensored(J)} \frac{1}{Z_{\ising, k + c'}(\beta')}
\exp\left(-\beta' \energy(J')\right) 
\\ & \qquad \times
\left(
\log \frac{Z_{\ising, k + c'}(\beta)}{Z_{\ising, k + c'}(\beta')}
+
\log \frac{\exp(-\beta' \energy(J'))}{\exp(-\beta \energy(J'))}
\right) \\
&\leq 
|\beta - \beta'| H
+
(\beta - \beta')
\expect_{\ising, \beta'} \energy(\infected) \\
&\leq 
2 |\beta - \beta'| H.
\end{aligned}
\end{align*}
Putting everything together completes the proof.
\end{proof}

\begin{proof}[Proof of Proposition~\ref{propIsingDiscretization}]
Let \(\beta\) in \(\completespace_{\ising}\) be given, and define \(\beta' = F_{\ising, D}(\beta)\).
Thus, by our discretization, we have 
\[
|\beta - \beta'|
\leq 
\frac{\delta^{2}}{2H}.
\]
Plugging this into the result of Lemma~\ref{lemmaIsingSmallBeta} completes the proof.
\end{proof}

\subsection{Additional general graphs proofs}
\label{sec:disc_technical:additionalProofs}
In this section, we have proofs for the supporting lemmas used in Section~\ref{app:general_graphs:proofs}.


\begin{proof}[Proof of Lemma~\ref{lemmaTypeIerrors}]
We use straightforward union bounds to obtain
\begin{align*}
& \begin{aligned}
\prob_{0, \beta}\left(S(\infected) > \hat{t}\right)
&\leq 
\prob_{0, \beta}\left(\left(\left\{S(\infected) > \hat{t}\right\} \cap \left\{\hat{t} \geq t\right\} \right)
\cup 
\left(
\left\{S(\infected) > \hat{t}\right\} \cap \left\{\hat{t} < t\right\}
\right)
\right) \\
&\leq 
\prob_{0, \beta}\left(\left\{S(\infected) > \hat{t}\right\} \cap \left\{\hat{t} \geq t\right\} \right)
+
\prob_{0, \beta}
\left(
\left\{S(\infected) > \hat{t}\right\} \cap \left\{\hat{t} < t\right\}
\right) \\ 
&\leq 
\prob_{0, \beta}\left(S(\infected) >  t \right)
+
\prob_{0, \beta}\left(
\hat{t} < t
\right).
\end{aligned}
\end{align*}
This completes the proof.
\end{proof}

\begin{proof}[Proof of Lemma~\ref{lemmaQuantileConcentration}]
The goal of the proof is to apply Bernstein's inequality, given as Lemma~\ref{lemmaBernsteinsInequality}.
First, by inequality~\eqref{eqnGeqThreshold}, we have
\[
p 
:= 
\prob_{0, \beta}(S(\infected_{i}) \geq t_{\alpha}) > \alpha.
\]
Thus, we have
\begin{align*}
& \begin{aligned}
\prob_{0, \beta}\left(\hat{t}_{\alpha - \epsilon, \beta} < t_{\alpha}\right)
&=
\prob_{0, \beta}
\left(
\frac{1}{N_{\sims}}
\sum_{i = 1}^{N_{\sims}} \ind\{S(\infected_{i}) \geq t_{\alpha}\} \leq \alpha - \epsilon
\right) \\
&=
\prob_{0, \beta}
\left(
\frac{1}{N_{\sims}}
\sum_{i = 1}^{N_{\sims}} \ind\{S(\infected_{i}) \geq t_{\alpha}\} - p \leq -\epsilon -(p - \alpha)
\right).
\end{aligned}
\end{align*}
Now, by considering \(X_{i} = p - \ind\{S(\infected_{i}) \geq t_{\alpha}\}\), 
we have \(\expect[X_{i}] = 0\) and \(|X_{i}| \leq 1\), and
we also see that \(\expect[X_{i}^{2}] = p(1 - p) \leq 1/4\).
Hence, we can apply Bernstein's inequality to obtain
\begin{align*}
& \begin{aligned}
\prob_{0, \beta}\left(\hat{t}_{\alpha - \epsilon, \beta} < t_{\alpha}\right)
&\leq 
\exp\left(-\frac{N_{\sims} (\epsilon + (p - \alpha))^{2}}{1/2 + 2 (\epsilon + (p - \alpha)) / 3}
\right).
\end{aligned}
\end{align*}
Thus, in order for this to be less than or equal to \(\delta\), we require
\begin{align*}
& \begin{aligned}
N_{\sims}
&\geq 
\left(\frac{1}{2(\epsilon + p -\alpha)^{2}} + \frac{2}{3(\epsilon + p -\alpha) }\right) 
\log \frac{1}{\xi}
\geq 
\left(\frac{1}{2\epsilon^{2}} +\frac{2}{3\epsilon}\right)
\log \frac{1}{\xi},
\end{aligned}
\end{align*}
which completes the proof.
\end{proof}

\begin{remark}
From the proof of Lemma~\ref{lemmaQuantileConcentration}, we see that if \(p = \prob_{0, \beta}(S(\infected_{i}) \geq t_{\alpha})\) is strictly greater than \(\alpha\), then using \(N_{\sims}\) simulations actually leads to a tighter bound on the error probability than \(\xi\).
Alternatively, we could achieve the desired error probability with fewer simulations, but since \(p\) is unknown, we settle for a coarser upper bound on the number of simulations required.
\end{remark}


\section{Auxiliary lemmas}
\label{AppAux}

For our risk bounds, we need a standard concentration result: 

\begin{lemma}
\label{LemBdDiff}
Suppose \(\{M_{t}\}_{t = 1}^{T}\) is a martingale with respect to some filtration and the differences
\(M_{t} - M_{t - 1}\) have expectation \(0\) and are bounded by \(c_{t}\).
Then
\begin{equation*}
\prob\left(M_{T} \ge t\right) \le \exp\left(-\frac{2t^2}{\sum_{t = 1}^{T} c_{t}^2}\right),
\end{equation*}
and
\begin{equation*}
\prob\left(M_{T} \le -t\right) \le \exp\left(-\frac{2t^2}{\sum_{t=1}^n c_t^2}\right).
\end{equation*}
\end{lemma}

We will apply Lemma~\ref{LemBdDiff} to the statistic \(W_{1}\) in the following manner:
Let \(\pathvar_{1:t}\) denote the the first \(t\) infected vertices, and define \(W_{1}(\pathvar_{1:t})\) to be the edges-within statistic for the infection \(J\) in \(\infspace_{t, 0}\) corresponding to the partial path.
Let \(\field_{t} = \sigma(\pathvar_{1:t})\) be the sigma-field of infections up to time \(t\).
Finally, define the martingale
\[
M_{t}
=
W_{1}(\pathvar_{1:t}) - \expect[W_{1}(\pathvar_{1:t})]
=
\sum_{s = 1}^{t} 
\left(W_{1}(\pathvar_{1:s}) - W_{1}(\pathvar_{1:s - 1})\right)
-
\expect\left[W_{1}(\pathvar_{1:s}) - W_{1}(\pathvar_{1:s - 1})\right],
\]
where \(W_{1}(\pathvar_{1:0}) = 0\). Since $W_{1}(\pathvar_{1:s}) - W_{1}(\pathvar_{1:s - 1}) \in [0, D]$ (where $D$ is the maximum degree of the graph), the martingale differences satisfy 
\[
|\Delta_{t}|
= 
|M_{t} - M_{t - 1}|
=
\Big|\left(W_{1}(\pathvar_{1:t}) - W_{1}(\pathvar_{1:t - 1})\right)
-
\expect\left[W_{1}(\pathvar_{1:t}) - W_{1}(\pathvar_{1:t - 1})\right]\Big| \le D.
\]
Thus, Lemma~\ref{LemBdDiff} applies with $c_t = D$.

Next, we provide a data-processing inequality of \cite{wu2017}.
Let \(D_{f}(P, Q)\) denote the \(f\)-divergence of \(P\) and \(Q\), which is defined as
\[
D_{f}(P, Q)
:=
\expect _{D} f\left(\frac{P}{Q}\right),
\]
for an \(f\) satisfying specific properties.
For our purposes, we are interested in \(f(t) = t \log t\) and \(f(t) = |t - 1| / 2\), which correspond to the Kullback-Leibler divergence and total variation distance, respectively.

\begin{lemma}
Let \(X\) be a random variable with distributions \(P_{X}\) and \(Q_{X}\).
Let \(Y = g(X)\) for some function \(g\), and define the induced distributions of \(Y\) to be \(P_{X}\) and \(Q_{Y}\).
Then we have
\begin{align*}
&\begin{aligned}
D_{f}(P_{Y}, Q_{Y})
&\leq 
D_{f}(P_{X}, Q_{X}).
\end{aligned}
\end{align*}
\label{lemmafDivDPI}
\end{lemma}


\begin{proof}
For the proof, we use \(p_{x}\), \(p_{y}\), and \(p_{x|y}\) to refer to probability masses for \(X = x\), \(Y = y\), and \(X = x\) given \(Y = y\).
Additionally, we define the analogous quantities for \(q\).

Since \(x\) fully determines \(y\), we have
\begin{align*}
&\begin{aligned}
D_{f}(P_{X}, Q_{X})
&=
\sum_{x} q_{x} f\left(\frac{p_{x}}{q_{x}}\right) \\
&=
\sum_{x, y} q_{x, y} f\left(\frac{p_{x, y}}{q_{x, y}}\right) \\
&=
\sum_{y} q_{y}
\sum_{x} q_{x | y} f\left(\frac{p_{xy}}{q_{xy}}\right) \\
&=
\expect_{Q_{Y}} \expect_{Q_{X | Y}} f\left(\frac{P_{X Y}}{Q_{X Y}}\right).
\end{aligned}
\end{align*}
Hence, we can use Jensen's inequality, which yields
\begin{align*}
&\begin{aligned}
D_{f}(P_{X}, Q_{X})
&\geq 
\expect_{Q_{Y}}  f\left(\expect_{Q_{X | Y}} \frac{P_{X Y}}{Q_{X Y}}\right) \\
&=
\sum_{y} 
q_{y} f\left(\sum_{x} q_{x | y} \frac{p_{xy}}{q_{xy}}\right)
\\
&=
\sum_{y} q_{y}
f\left(\sum_{x} \frac{p_{y} p_{x | y}}{p_{y}} \right)
\\
&=
\sum_{y} q_{y}
f\left(\frac{p_{y}}{p_{y}} \sum_{x} p_{x | y} \right) \\
&=
\sum_{y} q_{y}
f\left(\frac{p_{y}}{p_{y}}  \right)  \\
&=
D_{f}(P_{Y}, Q_{Y}).
\end{aligned}
\end{align*}
This completes the proof.
\end{proof}

\begin{lemma}
Let \(X\) and \(X'\) be jointly-defined random variables on the same finite space \(\xspace\) with respect to the measure \(\prob\). 
Let the respective marginal measures be \(\prob_{\beta}\) and \(\prob_{\beta'}\), i.e., \(\prob_{\beta}(X = J)\) and \(\prob_{\beta'}(X' = J)\) are the marginal distributions.
Then we have
\[
\tvdist\left(\prob_{\beta}, \prob_{\beta'}\right)
\leq 
\prob\left(X \neq X'\right).
\]
\label{lemmaTVCoupling}
\end{lemma}

\begin{proof}[Proof of Lemma~\ref{lemmaTVCoupling}]
Let \(E\) be any event.
Then \(E\) has the form 
\[
E
=
\{\pathvar \in E'\},
\] 
for some subset \(E'\) of the space of possible outcomes \(\xspace\).
By definition, we have
\begin{align*}
&\begin{aligned}
\tvdist\left(\prob_{\beta}, \prob_{\beta'}\right)
&=
\sup_{E} |\prob_{\beta}\left(E\right) - \prob_{\beta'}\left(E\right)| 
=
\sup_{E' \in \xspace}
|\prob_{\beta}\left(X \subseteq E'\right) - \prob_{\beta'}\left(X' \in E'\right)|.
\end{aligned}
\end{align*}
Note that since \(\xspace\) is finite, this supremum is achieved for some \(E' = E^{*}\).

With this setup, we have
\begin{align*}
&\begin{aligned}
\tvdist\left(\prob_{\beta}, \prob_{\beta'}\right)
&=
| \prob_{0, \beta}(X \in E^{*}) 
- \prob_{0, \infty}(X \in E^{*})|  \\
&\leq 
|\expect \ind\{X \in E^{*}\} - \expect \ind\{X' \in E^{*}\}| \\
&\leq 
\expect\left| \ind\{X \in E^{*}\} - \ind\{X \in E^{*}\}\right| \\
&\leq 
\prob(X \neq X').
\end{aligned}
\end{align*}
This completes the proof.
\end{proof}

Next, we have Pinsker's inequality, which may be found in \cite{tsybakov2009}.

\begin{lemma}[Pinsker's inequality]
Let \(P\) and \(Q\) be two probability measures on the same discrete space.
Then we have
\[
\tvdist(P, Q)
\leq 
\sqrt{\frac{1}{2} \kldiv(P, Q)}.
\]
\label{lemmaPinskersInequality}
\end{lemma}

The next lemma is the log sum inequality, which is given as a simple consequence of Jensen's inequality on \(f(x) = x \log x\) in \cite{cover2012}.

\begin{lemma}
Let \(a_{i}\) and \(b_{i}\) be nonnegative numbers for \(i = 1, \ldots, n\).
Then we have the inequality
\[
\left(\sum_{i = 1}^{n} a_{i}\right) \log\frac{\sum_{i = 1}^{n} a_{i}}{\sum_{i = 1}^{n} b_{i}}
\leq
\sum_{i = 1}^{n} a_{i} \log \frac{a_{i}}{b_{i}}.
\]
\label{lemmaLogSumInequality}
\end{lemma}

Next, we have a standard concentration result, which may be found in \cite{boucheron2013}.

\begin{lemma}[Bernstein's Inequality]
Let \(X_{i}\) be centered random variables, i.e., \(\expect X_{i} = 0\), such that \(|X_{i}| \leq M\) almost surely.
Then for any \(t > 0\), we have
\[
\prob\left(\sum_{i = 1}^{n} X_{i} > t\right)
\leq 
\exp
\left(
-\frac{t^{2}}{2 \sum_{i = 1}^{n} \expect[X_{i}^{2}] + 2Mt/3}
\right).
\]
\label{lemmaBernsteinsInequality}
\end{lemma}

\end{document}